\documentclass{amsart}
 \usepackage[svgnames]{xcolor}
\usepackage{amsmath}

\usepackage{thmtools}
\usepackage{stmaryrd}
\SetSymbolFont{stmry}{bold}{U}{stmry}{m}{n}
\usepackage[english]{babel}
\usepackage[utf8]{inputenc}
\usepackage[T1]{fontenc}   
\usepackage{enumitem}
\usepackage{mathrsfs}
\usepackage{bm}
\usepackage{stmaryrd}
\usepackage{amssymb,amsmath,amsthm}
\usepackage{xstring}
\usepackage{mathtools}
\usepackage{tikz-cd}
\usepackage{csquotes}
\usepackage{physics,derivative}

\newtheorem{theorem}{Theorem}[section]
\newtheorem{lemma}[theorem]{Lemma}

\theoremstyle{definition}
\newtheorem{definition}[theorem]{Definition}
\theoremstyle{plain}
\newtheorem{proposition}[theorem]{Proposition}
\newtheorem{corollary}[theorem]{Corollary}
\newtheorem*{theorem*}{Theorem}
 
\theoremstyle{remark}
\newtheorem{remark}[theorem]{Remark}

\newtheorem{example}[theorem]{Example}
\usepackage[style=numeric,backend=biber, sorting=nyt, giveninits=false, eprint=true,isbn=false,doi=true,url=false]{biblatex}

\usepackage{accents}   

\title[Liouville Rigidity for Hessian Equations]{Liouville Rigidity for Real and Complex Degenerate  Hessian Equations}

\author{Hao Fang}
\email{hao-fang@uiowa.edu}
\address{Department of Mathematics, University of Iowa, Iowa City, IA 52246,
USA}
\author{Biao Ma}
\email{bma@math.ecnu.edu.cn}
\address{School of Mathematical Sciences, East China Normal University, 500
Dongchuan road, Shanghai, China.}
\thanks{H. F.'s work is partially supported by a Simons Foundation mathematics
collaboration grant and an NSF-RTG grant. B. M. is partially supported
by NSFC grant (No. 12471052). J. Y. Wu is partially supported by NSFC grant (No. 12371207).}

\author{Jinyang Wu}
\email{wujy3@shanghaitech.edu.cn}
\address{Institute of Mathematical Sciences (IMS),
ShanghaiTech University, 393 Middle Huaxia Road, Shanghai, China
}

\usepackage[linktocpage]{hyperref}
\hypersetup{
	colorlinks=true,
	linkcolor=blue,
	filecolor=blue,
	citecolor = black,      
	urlcolor=cyan,
}

\begin{document}
\begin{abstract}
We prove Liouville rigidity theorems for translation-invariant real and
complex Hessian equations in the viscosity sense, where the PDE is
encoded by an admissible set $\mathcal{A}$.
The main structural notion is \emph{Liouville admissibility}, a recursive
geometric condition requiring each quotient set  to be either boundary compatible or to fall into a terminal class. Our main theorem
states that every bounded, globally \(C^{0,\alpha}\)
entire viscosity solution of 
\[
\mathrm{Hess}_{\mathbb F}u\in\partial\mathcal{A}
\]
     is constant \emph{if and only if} $\mathcal{A}$ is Liouville admissible; thus   the Liouville-type property is characterized as a geometric property of the admissible set.

A central class of examples arises from polarizations of univariate
G{\aa}rding polynomials satisfying the monotone root sequence condition,
producing mixed elementary-symmetric admissible sets and recovering the
standard \(k\)-Hessian equations as monomial cases. The framework also
allows anisotropic constructions, including linear pullbacks and
intersections of admissible sets.

\end{abstract}
	\maketitle
\newcommand{\A}{\mathcal{A}}
\newcommand{\Pol}{\Pi^\uparrow}
\newcommand{\pd}{\Gamma^n_+}
\newcommand{\psd}{\overline{\Gamma^n_+}}

\newcommand{\Logcone}{\mathcal{L}}
\newcommand{\x}{\mathbf x}
\newcommand{\R}{\mathbb{R}}
\newcommand{\C}{\mathbb{C}}
\newcommand{\HH}{\mathcal{H}}
\newcommand{\SSS}{\mathcal{S}}
\newcommand{\HessR}{\nabla^2_{\mathbb{R}}}
\newcommand{\HessC}{\nabla\bar{\nabla}}
\newcommand{\LLL}{{\mathcal{L}}}
\newcommand{\Hess}{\operatorname{Hess}}
\newcommand{\Q}{\mathcal Q}
\newcommand{\<}{\langle}
\renewcommand{\>}{\rangle}
\newcommand{\TT}{\mathcal T}

\providecommand{\Cone}{\mathrm{Cone}}

%\input{ComplexKrylovtypeclean}
%\input{Draft1}
%\input{Draft2}
%input{Draft3-Temp}
%\input{Draft3-main}
%\input{Draft4-intro}
%\input{Draft4-Temp}
\section{Introduction}

Liouville theorems are rigidity results for entire solutions of
elliptic equations. The classical example states that a
harmonic function on a Euclidean space bounded from above is constant. For fully
nonlinear degenerate Hessian equations, the corresponding rigidity is more subtle. In such problems, one must specify an admissible set serving as the defining domain of the elliptic operator. The Liouville property is then determined not only by the elliptic equation itself, but also by the geometry of the chosen admissible set. 

The purpose of this paper is to prove Liouville rigidity theorems for
translation-invariant real and complex Hessian equations. Let
\(\mathbb F\in\{\mathbb R,\mathbb C\}\), and let \(V\) be a finite-dimensional vector space over \(\mathbb F\). We write
\(\mathcal Q(V)\) for the space of real symmetric bilinear forms when
\(\mathbb F=\mathbb R\), and for the space of Hermitian forms when
\(\mathbb F=\mathbb C\). Given an admissible set
\(\A\subset\mathcal Q(V)\), we consider the boundary equation
\begin{equation}\label{PDE}
\Hess_{\mathbb F}u\in\Sigma:=\partial\A.
\end{equation}
Here, \(\Hess_{\mathbb F}\) denotes the real Hessian in the real case
and the complex Hessian in the complex case.

Roughly, an admissible set \(\A\) is a closed convex elliptic subset of
\(\mathcal Q(V)\) that contains the origin and satisfies a positive trace
condition.
Equation \eqref{PDE} is understood in the
viscosity sense.
After a metric is fixed, \(\A\) is called spectral if it is invariant
under the orthogonal group in the real case or the unitary group in the complex case, respectively. Equivalently, membership in \(\A\) depends
only on the unordered eigenvalues.
The precise definitions of admissible sets and viscosity solutions are given in~\autoref{sec:real-complex-hessian}. 

Our work is motivated by geometric PDEs, where the appropriate notion of
admissibility often carries geometric meaning. This is evident in the
classical real and complex Monge--Amp\`ere equations and in
\(k\)-Hessian equations. The admissible region also plays an important
role in mixed symmetric equations and in fully nonlinear equations
arising in complex geometry. See, for example, the work of Tosatti--Weinkove
\cite{MR3600038}.
The present paper provides an abstract framework in which admissible sets
and the resulting Liouville rigidity can be studied in a unified manner.

To establish Liouville theorems, we introduce the notion of Liouville admissible sets, which involves the geometric operation of quotient reduction. For every nonzero proper \(\mathbb F\)-linear subspace \(N\subset V\), we obtain a reduced admissible set
\[
\A_{/N}\subset \mathcal Q(V/N).
\]
The Liouville admissibility condition requires that either the boundary equation reduces compatibly or the reduced set belongs to a terminal set, where every bounded-above subsolution is constant. This recursive structure is designed to support an induction argument on dimension. See \autoref{sec:real-complex-hessian} and \autoref{Appendix1} for precise definitions and details.

Our first main result is the following geometric characterization of Liouville rigidity.

\begin{theorem}\label{thm:intro-main}

Let \(0<\alpha\le1\). Let \(V\) be a real or complex linear vector space, and let
\(\mathcal Q(V)\) denote the space of symmetric bilinear forms on \(V\) in the real case and Hermitian forms on \(V\) in the complex case. Let
\(\A\subset\mathcal Q(V)\) be admissible. Then the following are equivalent:
\begin{enumerate}
\item \(\A\) is Liouville admissible.

\item Every bounded globally \(C^{0,\alpha}\) entire viscosity solution of
\[
\Hess_{\mathbb F}u\in\Sigma,
\qquad
\Sigma=\partial\A,
\]
is constant.
\end{enumerate}
\end{theorem}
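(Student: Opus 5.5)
The plan is to prove the two implications separately, with the sufficiency direction (1)$\Rightarrow$(2) carried out by induction on $\dim_{\mathbb F}V$, following the recursive structure of Liouville admissibility. The necessity direction (2)$\Rightarrow$(1) will be proved by contraposition, constructing a nonconstant bounded H\"older solution whenever the recursive condition fails.

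For (1)$\Rightarrow$(2), let $u$ be a bounded, globally $C^{0,\alpha}$ viscosity solution of $\Hess_{\mathbb F}u\in\Sigma$. I will use translation invariance and the global H\"older bound (Arzel\`a--Ascoli) to pass to blow-down limits $u_R(x)=u(Rx)-u(0)$, or alternatively to limits of translates $u(\cdot+x_j)$. Viscosity solutions are stable under locally uniform limits, so every such limit is again a bounded $C^{0,\alpha}$ solution of the same equation.

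The key step is to show that if $u$ is nonconstant, the oscillation concentrates along some direction. I would take $N$ to be the largest subspace along which a suitable limit profile is invariant, after subtracting the translation-invariant directions. The limit then descends to a function on $V/N$ that solves $\Hess_{\mathbb F}\bar u\in\partial(\A_{/N})$ when $\A_{/N}$ is boundary compatible, and is a subsolution of $\A_{/N}$ otherwise. In the terminal case, the bounded-above subsolution is constant by definition. In the boundary-compatible case, the induction hypothesis applies because $\dim V/N<\dim V$, provided the quotient set is again Liouville admissible, which the recursive definition guarantees. The base case $\dim V=1$ reduces to convexity or concavity on a line, or to the terminal class.

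Finally, constancy of all limit profiles must be fed back into constancy of $u$. I would do this with a sup/inf argument: translate so that $u(x_j)\to\sup u$, extract a limit attaining its maximum, and show that the limit is constant in the $N$ directions and, via the reduction, on $V/N$ as well. Applying a strong-maximum-type argument with the positive trace condition, which makes the set $\{\text{trace}>0\}$ a comparison class, then forces $\sup u=\inf u$. The main obstacle is the choice of $N$: making the reduction to $V/N$ rigorous in the viscosity sense requires verifying that test functions on $V/N$ lift to test functions on $V$ whose Hessians lie in the correct faces of $\A$. This is exactly where the quotient construction $\A_{/N}$ from \autoref{Appendix1} enters, and I expect it to be the most delicate step.

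For (2)$\Rightarrow$(1), suppose $\A$ is not Liouville admissible. Then there is some $N$ for which $\A_{/N}$ is neither boundary compatible nor terminal. Non-terminality gives a bounded-above nonconstant subsolution on $V/N$. Failure of boundary compatibility gives a quadratic form $q\in\partial\A$ whose restriction or reduction lies in the interior of the quotient. My first attempt would combine these into an explicit one-variable profile $u(x)=\phi(\ell(x))$, with $\ell$ a linear functional vanishing on $N$ and $\phi$ bounded, H\"older, and built as a truncated or $\arctan$-type function. The goal is that its Hessian $\phi''\,\ell\otimes\ell$ lies on $\Sigma$ wherever it is smooth, and that the kinks satisfy the viscosity inequalities. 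This produces a bounded, nonconstant, globally $C^{0,\alpha}$ entire solution, contradicting (2).
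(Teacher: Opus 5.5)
\textbf{Forward direction.} Your plan assumes there is a limit profile invariant along some direction, but nothing in it produces one. This is the heart of the proof. The step you flagged as delicate, lifting test functions, is routine: $\tilde\varphi(z',\zeta)=\varphi(z')$ has Hessian $q_N^*\operatorname{Hess}_{\mathbb F}\varphi$. Limits of translates of a nonconstant bounded function need not be invariant in any direction. Blow-downs $u(R\,\cdot)$ are not even precompact, since $[u(R\,\cdot)]_{0,\alpha}=R^\alpha[u]_{0,\alpha}$.

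The paper obtains invariance from a dichotomy, after normalizing $\inf w=0$, $\sup w=1$.
\begin{itemize}
\item In Case 1 there are $z_\ell$, $r_\ell\to\infty$, $\varepsilon_\ell\to0$ and unit $\xi_\ell$ with $\int_{B_{r_\ell}(z_\ell)}|\nabla_{\xi_\ell}[w]_{\varepsilon_\ell}|^2\to0$, \emph{together with} the quantitative inequality $w(z_\ell)\le\frac12\bigl([\frac12[w]_{\varepsilon_\ell}^2]_{2r_\ell}(z_\ell)+[w]_\beta(z_\ell)-\frac43\bigr)$. Rotated translates then converge to a solution independent of one direction. The quotient step (terminal set, or the induction hypothesis) makes this limit a constant $c$. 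The retained inequality then yields $\frac c2-\frac5{12}\ge c$, which is impossible.
\item In Case 2 the directional energy is bounded below by some $c_\beta>0$ wherever that inequality holds, and this bound is used constructively. The gradient term in $\operatorname{Hess}_{\mathbb F}(\frac12[w]_\varepsilon^2)$ absorbs $-\tau_0|z|^2$, so $v=\frac12\bigl([\frac12[w]_\varepsilon^2]_{2r}+[w]_\beta-\frac43-\tau_0|z|^2\bigr)$ is a smooth subsolution on the bounded set $\Omega=\{w<v\}$. Comparison with the supersolution $w$ gives $\Omega=\varnothing$. On the other hand, Cartan's lemma (\autoref{cor:Cartan-density}, applied at a point with $w<\frac1{18}$) gives $0\in\Omega$.
\end{itemize}

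Your sup/inf step cannot replace Case 2, for three reasons.
\begin{itemize}
\item A translate-limit at the supremum is constant by the strong maximum principle for $h$-subharmonic functions, but that says nothing about $u$ itself.
\item At the infimum there is no minimum principle, because $\Sigma$-supersolutions are not superharmonic.
\item Comparison is useless until a strict smooth subsolution such as $v$ has been built. Building it needs exactly the gradient lower bound and the $\frac12w^2$ trick of \ref{PropConsCond:P3}.
\end{itemize}

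\textbf{Converse direction.} The one-variable profile cannot work. By \ref{Cond:subharmonic} every $\mathcal A$-subsolution is $h$-subharmonic. An $h$-subharmonic function of a single real coordinate $\ell(x)$ is convex in it; of a single complex coordinate, it is subharmonic in the plane. So a bounded subsolution $\phi\circ\ell$ is constant; this is essentially \autoref{lem:low-dimensional-admissible-terminal}. Concretely, an $\arctan$-type $\phi$ has $\phi''<0$ somewhere. There $\phi''\,\ell\otimes\ell$ has negative trace, so it is not in $\mathcal A$.

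What is missing is the boundary dichotomy, \autoref{prop:boundary-dichotomy-reduction}. A single point $B_0\in\Sigma_{/N}\cap\operatorname{int}(\mathcal A_{/N})$ forces a supporting hyperplane $P$ of $\mathcal A$ at $q_N^*B_0$ to contain all of $q_N^*\mathcal Q(V/N)$. Hence $q_N^*\mathcal A_{/N}\subset\mathcal A\cap P\subset\Sigma$, that is, $\Sigma_{/N}=\mathcal A_{/N}$.

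With this global inclusion, use the non-terminal witness itself rather than an ad hoc profile:
\begin{itemize}
\item replace it by $\max\{u,-C\}$, which is still a subsolution and is now bounded;
\item mollify it, which by \ref{PropConsCond:P5} gives a subsolution that is smooth, globally Lipschitz, and nonconstant for small $\varepsilon$;
\item pull it back by $q_N$.
\end{itemize}
The pullback's Hessian lies in $q_N^*\mathcal A_{/N}\subset\Sigma$ at every point. By \ref{PropConsCond:P1} it is therefore a bounded, nonconstant, globally $C^{0,\alpha}$ viscosity solution, contradicting (2).
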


\autoref{thm:intro-main} identifies
Liouville rigidity as an intrinsic geometric property of the admissible
set.  It gives a
sharp equivalence between the analytic Liouville property for bounded
H\"older entire viscosity solutions and the recursive reduction
condition of Liouville admissibility.  In one direction, Liouville
admissibility provides the induction mechanism to prove rigidity; in
the other direction, failure of Liouville admissibility produces a
nonconstant bounded smooth solution after passing to a quotient,
mollifying, and pulling back.  Thus, the question whether
\eqref{PDE}
has nonconstant bounded entire solutions is reduced to the quotient
geometry of \(\A\), and the Liouville property is seen to depend on the
chosen admissible region rather than only on the algebraic boundary
equation.
 
The present work is related to three lines of existing work. 

First, in the complex setting, Dinew--Ko{\l}odziej~\cite{MR3636634} established a Liouville theorem for bounded entire maximal \(k\)-subharmonic functions on \(\mathbb C^n\) with bounded gradient. As an application, they obtained gradient estimates for complex \(k\)-Hessian equations on compact Kähler manifolds, completing a program initiated by Hou--Ma--Wu~\cite{MR2653687}. This Liouville theorem was subsequently generalized by \citeauthor{MR3807322}~\cite{MR3807322}. More recently, Fu--Yau--Zhang~\cite{fu2026criticallyzequationkahler} studied a broader class of complex Hessian equations, proved the corresponding Liouville theorem, and derived the gradient estimate for the critical LYZ equation.

Second, since the 2000s, Y. Y. Li has investigated viscosity solutions of fully nonlinear differential inclusions arising in conformal geometry~\cite{MR2547976}. In that and subsequent works
\cite{MR2547976,MR1988895,MR3664220,MR4181003,chu2024liouvilletheoremsconformallyinvariant},
the corresponding geometric PDEs are often formulated with a symmetric
elliptic function applied to the eigenvalues of the Schouten tensor;
these works establish Liouville, quantitative Liouville, comparison, and
blow-up results for conformally invariant fully nonlinear equations.

Finally, our work is close in spirit to the Harvey--Lawson theory of
subequations and Dirichlet duality, where fully nonlinear degenerate
elliptic equations are represented by closed constraint sets in the jet
bundle and studied through subharmonicity, duality, monotonicity, and
restriction \cite{MR2487853,MR2844439,MR3330548}.

The study of fully nonlinear Hessian equations goes back to the foundational works of Caffarelli--Nirenberg--Spruck and Kohn
\cite{MR739925,MR780073,MR806416}. Since then, an extensive literature
has developed; we mention only a small and necessarily incomplete
selection of references
\cite{MR3703453,MR3661790,MR4278951,MR3600038,MR1368245,MR1275451, MR1634570, MR1726702, MR1923626, MR3245188, MR3091809,MR3385185,MR3284698,MR4545123,MR2653687,MR1867613,MR2663711,MR3919441,MR3169787,MR4531386,MR2644763,MR3464054,MR1963687}.

The formulation in \eqref{PDE} treats real and complex Hessian
equations in a unified way, but the corresponding terminal models are
different. In the complex case, the basic spectral terminal model is the
positive Hermitian cone, reflecting the classical Liouville theorem for
bounded-above entire plurisubharmonic functions. In the real case, there
is more flexibility: besides the positive semidefinite cone, one has
larger terminal models, most notably the real logarithmic cone. In
\autoref{Appendix1} we identify them as the maximal spectral terminal cones in
both settings.

In the complex setting, the viscosity formulation used here is
compatible with the pluripotential viewpoint for complex Hessian
equations but is designed to apply to general admissible sets under
weaker regularity assumptions. In particular, our
Liouville theorem 
contains the Dinew--Ko{\l}odziej theorem \cite{MR3636634} as a consequence in the
complex Hessian setting.

The proof of the main theorem combines viscosity theory with the
linear-algebraic geometry of admissible sets.  Its novelty lies in this
intrinsic geometric viewpoint, while the technical arguments are
inspired by earlier works of \cite{MR3636634} and \cite{MR3807322}, with subtle improvements to work with our new setup.

A central source of examples for Theorem~\ref{thm:intro-main} is the spectral case. 

After fixing a Euclidean or Hermitian metric, a spectral admissible set is an admissible set that is invariant under the orthogonal or unitary group,
respectively. Hence a spectral admissible set is determined by an eigenvalue region in $\R^n$. See \autoref{sec:geometry-liouville-admissible} for the proof of \autoref{thm:spectralimpliesliouville}.
\begin{theorem}\label{thm:spectralimpliesliouville}
    A spectral admissible set is Liouville admissible. 
\end{theorem}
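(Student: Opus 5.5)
The plan is to verify the recursive definition of Liouville admissibility directly, by induction on $\dim V$. The engine is a single structural fact: the class of spectral admissible sets is closed under quotient reduction. Fix the Euclidean or Hermitian metric for which $\A$ is invariant, and let $\Gamma\subset\R^n$ be the associated symmetric, closed, convex eigenvalue region. For a nonzero proper subspace $N\subset V$, identify $V/N$ with $N^\perp$, so that $\A_{/N}$ consists of the forms $q$ on $V/N$ whose pullback $\pi^*q$ lies in $\A$.

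\textbf{Step 1: quotients of spectral sets are spectral.} The pullback $\pi^*q$ has eigenvalues $(\lambda(q),0,\dots,0)$, with $\dim N$ zeros. Hence $\A_{/N}$ is the spectral set attached to the slice
\[
\Gamma' := \{\lambda'\in\R^m : (\lambda',0)\in\Gamma\},
\qquad m=\dim V/N .
\]
This set is again symmetric, closed and convex, and it contains $0$. It is elliptic because $\Gamma+\overline{\Gamma^n_+}\subset\Gamma$ restricts to the slice. Invariance of $\A_{/N}$ under the orthogonal (resp. unitary) group of $N^\perp$ is immediate, since any such transformation extends to $V$ by the identity on $N$. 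The remaining admissibility axiom is the positive trace condition, which I must check on $\Gamma'$. The natural route is to take the point of $\Gamma$ supplied by that condition, move it into the slice $\{\lambda_{m+1}=\dots=\lambda_n=0\}$ using ellipticity and symmetric averaging, and then use convexity. If this fails, the slice is degenerate, and I handle that case in Step 3.

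\textbf{Step 2: boundary compatibility in the nondegenerate case.} I claim that $\pi^*(\partial\A_{/N})\subset\partial\A$. Suppose $\lambda'\in\partial\Gamma'$ but $(\lambda',0)\in\operatorname{int}\Gamma$. Then a full neighbourhood of $(\lambda',0)$ lies in $\Gamma$, and intersecting it with the slice gives a neighbourhood of $\lambda'$ in $\Gamma'$. This contradicts $\lambda'\in\partial\Gamma'$. I will then match this with the paper's precise formulation of boundary compatibility. If that formulation also asks for the converse inclusion, or for transversality (the normal cone to $\A$ at $\pi^*q$ not annihilating $N^\perp$), I would obtain it from the positive trace condition on $\Gamma'$ together with the symmetry of $\Gamma$. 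Symmetry lets me average supporting hyperplanes over permutations fixing the last $n-m$ coordinates.

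\textbf{Step 3: the degenerate case (expected main obstacle).} This is the case where $\Gamma'$ loses the trace condition or has empty interior, equivalently where the boundary equation does not reduce. Here I must show that $\A_{/N}$ lies in a terminal class. My first attempt is a convexity argument. If no point of $\Gamma'$ has positive trace, then the symmetric convex set $\Gamma'$ is contained in $\{\sum\lambda_i\le 0\}$. Combined with the ellipticity inclusion $\Gamma'+\overline{\Gamma^m_+}\subset\Gamma'$, this should force $\Gamma'$ to be a subset of a degenerate cone: the positive Hermitian cone in the complex case, or a cone contained in the real logarithmic cone or the positive semidefinite cone in the real case. These are the terminal models identified in \autoref{Appendix1}, where the maximal spectral terminal cones are characterized, so containment in one of them should make $\A_{/N}$ terminal. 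The delicate point is the real case. There the dichotomy must land inside the maximal terminal cone of \autoref{Appendix1}, and I would check the borderline slices ($m=1,2$) by hand.

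\textbf{Step 4: recursion.} Since quotients of quotients are quotients, $(\A_{/N})_{/M}=\A_{/\pi^{-1}M}$. Steps 1--3 therefore apply at every level of the recursion, and the induction closes.
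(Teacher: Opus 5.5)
\textbf{There is a genuine gap: your case split rests on the wrong dichotomy.}

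The situation you call degenerate never occurs. Since $0\in\Gamma$ and $\Gamma$ is elliptic, $\mathbb R^m_{\ge0}\subset\Gamma'$, so $\Gamma'$ always has interior points of positive trace. Indeed, the trace condition for $\A_{/N}$ holds for every admissible $\A$ by \autoref{lem:admissibilityofquotient}. So Step 3 is vacuous, and everything rests on Step 2.

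Step 2 only proves the easy inclusion $\partial(\A_{/N})\subset\Sigma_{/N}$. Liouville admissibility requires equality, and the reverse inclusion genuinely fails in your \emph{nondegenerate} case:
\begin{itemize}
\item For $\A=\mathcal Q_+(V)$ one has $\Gamma'=\mathbb R^m_{\ge0}$. Yet every $q_N^*B$ has $N$ in its kernel, so $\Sigma_{/N}=\A_{/N}\neq\partial(\A_{/N})$.
\item The same happens for the $k$-Hessian cone whenever $\dim V/N<k$, because $\sigma_k$ vanishes on the whole slice.
\end{itemize}
So the converse inclusion cannot be extracted from the trace condition plus symmetry. The transversality you mention in Step 2 is the right condition, but it really can fail. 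Its failure, not loss of the trace condition, is exactly the case where the terminal alternative must be proved, and your proposal gives no argument there.

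\textbf{The missing idea} is the supporting-hyperplane argument of \autoref{prop:boundary-dichotomy-reduction}.
\begin{enumerate}
\item Suppose some $B_0\in\Sigma_{/N}$ is interior to $\A_{/N}$. Then a supporting functional $\ell=\Tr_g(S\,\cdot)$ of $\A$ at $q_N^*B_0$ must vanish on all of $q_N^*\mathcal Q(V/N)$. Hence $\ell\ge0$ on $\A$ and $\Sigma_{/N}=\A_{/N}$.
\item Since $\mathcal Q_+(V)\subset\A$, we get $S\ge0$. The $N^{\perp}$-block of $S$ vanishes, so by positivity the mixed blocks vanish too. Hence $S_{jj}>0$ for some coordinate direction $j$ inside $N$.
\item Now use spectrality in a way your proposal never does. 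Take $\xi\in\Gamma'$ and exchange the $i$-th and $j$-th entries of $(\xi,0)$. The resulting diagonal form still lies in $\A$, and evaluating $\ell\ge0$ on it gives $S_{jj}\xi_i\ge0$, so $\xi_i\ge0$.
\end{enumerate}
Thus $\A_{/N}\subset\mathcal Q_+(V/N)$, which is terminal by \autoref{prop:positive-cone-terminal}. The logarithmic cone is never needed.

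Finally, the definition of Liouville admissibility quantifies over all nonzero proper $N$ at once. No induction on dimension (your Step 4) is required.
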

The admissible sets for the Laplace equation, the Monge--Amp\`ere equation, and  the $k$-Hessian equations are fundamental examples.

An important example comes from the recent paper of Fu--Yau--Zhang \cite{fu2026criticallyzequationkahler}. In their work, Fu--Yau--Zhang solved the critical LYZ equation in K\"ahler geometry. To establish a gradient estimate, they introduced the equation
\begin{equation}
\label{FYZ}
\sigma_{n-1}(\Hess_{\mathbb C}u)
+
\sigma_n(\Hess_{\mathbb C}u)
=
0.
\end{equation}
They showed that every bounded, 
entire $C^{0,1}$, $(n-1)$-subharmonic function satisfying \eqref{FYZ} must be constant. To the best of our knowledge, this is the first Liouville theorem for an inhomogeneous equation of this type. 
Our \autoref{thm:intro-main} yields an alternative proof of their result.

Our next result focuses on the construction of spectral Liouville admissible sets using the theory of G{\aa}rding and ideal G{\aa}rding polynomials developed in recent work of the first and second named authors \cite{fang2026gaardingpolynomials,fang2026idealgaardingpolynomials}. This theory strictly extends the theory of multivariate real-stable polynomials and, from the PDE point of view, provides a systematic method for producing convex elliptic regions in the eigenvalue space. We use the following explicit one-variable input.

Let
\begin{equation}\label{polynomial}
    f(t)=\sum_{k=1}^{d}a_k t^k,
    \qquad
    a_k\ge0,
    \qquad
    a_d>0,
\end{equation}
have no constant term. Assume that \(f\) satisfies the monotone root
sequence (MRS) condition: every derivative \(f^{(j)}\), \(0\le j\le d-1\),
has a real root, and the largest real roots $r(f^{(j)})$ satisfy
\begin{equation}
\label{mrs-intro}
    0=r(f)\ge r(f')\ge\cdots\ge r(f^{(d-1)}).
\end{equation}
Such polynomials were first introduced as right-Noetherian polynomials
by \citeauthor{MR4604837} \autocites{MR4604837,LIN2024110398}, and were later
identified as univariate G{\aa}rding polynomials in
\autocite{fang2026gaardingpolynomials}. For \(n\ge d\), let
\(\Pol_n(f)\) denote the \(n\)-variable multi-affine polarization of
\(f\), normalized by
\[
    \Pol_n(f)(t,\ldots,t)=f(t).
\]

\begin{theorem} 
\label{thm:intro-example}
Let \(f\) satisfy \eqref{polynomial} and \eqref{mrs-intro}. Then, for every \(n\ge d\), the
polarization \(g=\Pol_n(f)\) is G{\aa}rding  and has a G{\aa}rding component whose spectral lift is Liouville admissible. Consequently, the
associated spectral Hessian boundary equation satisfies the conclusion of the Liouville rigidity  in \autoref{thm:intro-main}.
\end{theorem}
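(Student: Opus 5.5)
The plan is to reduce \autoref{thm:intro-example} to \autoref{thm:spectralimpliesliouville} followed by \autoref{thm:intro-main}. The only real work is to produce, from $f$, a closed convex symmetric region $\Gamma\subset\R^n$ whose spectral lift is admissible.

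\textbf{Step 1: $g$ is G{\aa}rding.} Since $f$ satisfies the MRS condition \eqref{mrs-intro} and has nonnegative coefficients with no constant term, it is a univariate G{\aa}rding polynomial in the sense of \cite{fang2026gaardingpolynomials}. Its polarization is
\[
\Pol_n(f)=\sum_{k=1}^d a_k\binom{n}{k}^{-1}\sigma_k.
\]
I would invoke the polarization theorem of \cite{fang2026gaardingpolynomials,fang2026idealgaardingpolynomials}: for $n\ge d$, the multi-affine polarization of a univariate G{\aa}rding polynomial is G{\aa}rding. Concretely, this means there is a distinguished connected component $\Gamma^\circ$ of $\{g>0\}$ which is an open convex cone-like region containing $\Gamma^n_+$, and $g$ is concave or monotone there in the required sense. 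Next I take $\Gamma=\overline{\Gamma^\circ}$. This set is closed and convex. It is symmetric because $g$ is symmetric and the component is chosen canonically, namely the one containing the diagonal ray $\{t(1,\dots,1):t>r(f)=0\}$. It is elliptic, $\Gamma+\overline{\Gamma^n_+}\subset\Gamma$, by the G{\aa}rding monotonicity of $g$ along positive directions. Since $f(0)=0$ gives $g(0)=0$, we have $0\in\partial\Gamma$.

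\textbf{Step 2: admissibility of the spectral lift.} Define
\[
\A=\{A\in\Q(V):\lambda(A)\in\Gamma\}.
\]
Convexity of $\A$ follows from convexity and symmetry of $\Gamma$ via the standard eigenvalue argument: the symmetric convex hull of a permutation-invariant convex set is stable under Schur–Horn majorization. Closedness and ellipticity transfer directly, and $0\in\A$. For the positive trace condition I would show that $\Gamma\subset\{\sum\lambda_i\ge0\}$. The component $\Gamma$ is contained in the G{\aa}rding cone of the lowest nonvanishing part of $g$ at the origin, and that cone lies inside the cone $\Gamma_1$ of $\sigma_1$, because every hyperbolic-type G{\aa}rding cone containing $\Gamma^n_+$ sits inside the cone of $\sigma_1$. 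The MRS condition $r(f^{(j)})\le 0$ is exactly what ensures that these cones are nested in the right direction.

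\textbf{Step 3: conclusion.} Once $\A$ is a spectral admissible set, \autoref{thm:spectralimpliesliouville} gives that $\A$ is Liouville admissible. \autoref{thm:intro-main} then yields rigidity of bounded $C^{0,\alpha}$ entire viscosity solutions of $\Hess_{\mathbb F}u\in\partial\A$.

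The main obstacle is Step 1. This includes identifying the correct G{\aa}rding component, showing it contains $0$ on its boundary, and showing it is convex and elliptic. For this I would rely on the precise statement from \cite{fang2026gaardingpolynomials} on polarizations of MRS polynomials. If that result only gives convexity of the open component, I would check by hand that its closure passes through the origin, using that $f(t)>0$ for $t>0$ and $f(0)=0$. The trace condition in Step 2 is the secondary point to verify carefully.
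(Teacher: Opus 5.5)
Your outline follows the paper's proof of \autoref{thm:polarized-garding-liouville}. You invoke the polarization theorem for the G{\aa}rding component and pass to the closed spectral lift, which is closed, convex by the Davis/majorization argument, elliptic, and contains $0$. You then apply \autoref{thm:spectralimpliesliouville} and \autoref{thm:intro-main}. Your identification of the component is fine: it contains $\Gamma^n_+$, hence the ray $t\mathbf 1$, $t>0$, where $g=f(t)>0$, so $0\in\partial\Gamma$.

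\textbf{The gap is in the positive trace condition (A4).}
\begin{itemize}
\item You assert, without proof, that $\Gamma$ lies in the closed G{\aa}rding cone of the lowest-degree part $a_{k_0}\sigma_{k_0}/\binom{n}{k_0}$, where $k_0=\min\{k:a_k>0\}$. The sentence attributing this to ``MRS nesting'' is not an argument.
\item Blowing up at the origin only gives $\sigma_{k_0}\ge 0$ on the tangent cone $\bigcup_{t>0}t\,\Gamma$. Upgrading this to $\Gamma\subset\overline{\Gamma_{k_0}}$ is a real extra step: you must rule out zeros of $\sigma_{k_0}$ inside that open cone. You do not supply it.
\item Your second claim, that every hyperbolic cone containing $\Gamma^n_+$ lies in $\{\sigma_1>0\}$, is false without symmetry. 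The polynomial $p=x_1$, with cone $\{x_1>0\}$, is a counterexample.
\end{itemize}

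\textbf{None of this is needed.} The paper uses only what you already have from Step 1. Since $\Gamma$ is closed and convex with $0\in\partial\Gamma$, there is a nonzero $\mathbf a$ with $\mathbf a\cdot x\ge0$ on $\Gamma$. Ellipticity $\Gamma+\overline{\Gamma^n_+}\subset\Gamma$ forces $a_i\ge 0$, so $\sum_i a_i>0$. By permutation invariance, every $\pi\mathbf a$ with $\pi\in S_n$ is also a supporting normal. Averaging over $S_n$ gives $\sum_i x_i\ge 0$ on $\Gamma$.

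An equivalent route: for $x\in\Gamma$, the permutation average is $m\mathbf 1\in\Gamma$ with $m=\frac1n\sum_i x_i$. If $m<0$, then $0\in m\mathbf 1+\Gamma^n_+\subset\operatorname{int}\Gamma$, which contradicts $0\in\partial\Gamma$.

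So the MRS condition enters only through the cited polarization theorem and the normalization $r(f)=0$, not through any nesting of G{\aa}rding cones at the origin.
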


Therefore,  the explicit MRS condition \eqref{mrs-intro} gives an algebraic
recipe for producing Liouville-rigid spectral Hessian equations. It
recovers the usual \(k\)-Hessian branches from \(f(t)=t^k\), and gives
mixed elementary-symmetric components from polynomials with more than one
term, such as \eqref{FYZ}.

\begin{remark}
In the spectral multi-affine polynomial setting, an admissible convex elliptic region
cannot arise from an arbitrary polynomial. It has to be the result of a polarization of  a one-variable polynomial satisfying the
MRS condition by \autocite{fang2026gaardingpolynomials}. Hence, \autoref{thm:intro-example}   provides the full class of such Liouville-admissible spectral examples.
\end{remark}

The spectral theory is only one part of the picture. Liouville
admissibility is stable under intersections and actions of the general linear group \(GL_{\mathbb F}(V)\). As a consequence,
our framework also contains many non-isotropic examples.

The paper is organized as follows. In
\autoref{sec:real-complex-hessian} we introduce the real and
complex Hessian framework, admissible sets, terminal sets, Liouville
admissibility, and the precise statement of the main theorem.
\autoref{sec:geometry-liouville-admissible} develops the geometry
of Liouville admissible sets, including quotient reductions, stability
properties, and the boundary dichotomy. In
\autoref{sec:applications-univariate-garding} we construct
spectral Liouville admissible sets from polarized univariate
G{\aa}rding polynomials. The remaining sections are devoted to the
proof of the main theorem: first, the analytic preliminaries, then the
basic properties of viscosity solutions, and finally the
dimension-reduction argument.

\subsection*{Acknowledgements}
The second named author thanks Dekai Zhang for helpful discussions.

\section{Real And Complex Hessian Equations}
\label{sec:real-complex-hessian}
In this section, we fix notation for our work, introduce key constructions and definitions, and state our main theorem.

We consider the real and complex cases simultaneously. Let
\(\mathbb F\in\{\mathbb R,\mathbb C\}\), and let \(V\) be an
\(n\)-dimensional vector space over \(\mathbb F\).

We define
\begin{equation}
\mathcal Q(V)
=
\begin{cases}
\SSS(V), & \mathbb F=\mathbb R,\\
\mathcal H(V), & \mathbb F=\mathbb C,
\end{cases}
\end{equation}
where \(\SSS(V)\) denotes the space of real symmetric bilinear forms on
\(V\), and \(\mathcal H(V)\) denotes the space of Hermitian forms on
\(V\). We write \(\mathcal Q_+(V)\) and \(\mathcal Q_{++}(V)\) for the
cones of positive semidefinite and positive definite forms, respectively.

For \(u\in C^\infty(\Omega)\), where \(\Omega\subset V\), define
\begin{equation}
\Hess_{\mathbb F}u
=
\begin{cases}
\HessR u= (u_{ij}), & \mathbb F=\mathbb R,\\
\HessC u=(u_{i\bar j}), & \mathbb F=\mathbb C.
\end{cases}
\end{equation}
Thus \(\Hess_{\mathbb F}u\in\mathcal Q(V)\). For example, in complex coordinates,
if
\[
\mathbf v=(v^1,\cdots,v^n)
\qquad
\mathbf w=(w^1,\cdots,w^n),
\]
then
\[
\HessC u(x)(\mathbf v,\mathbf w)
=
\sum_{i,j=1}^n
u_{i\bar j}(x)\,v^i\,\overline{w}^j.
\]

For \(g\in\mathcal Q_{++}(V)\) and \(A\in\mathcal Q(V)\), define
\[
\Tr_g A
=
\sum_{\alpha=1}^{n}A(e_\alpha,e_\alpha),
\]
where \(\{e_1,\ldots,e_n\}\) is any \(g\)-orthonormal basis of \(V\).
This quantity is independent of the choice of \(g\)-orthonormal basis.

We introduce admissible sets.

\begin{definition}\label{def:admissible}
A closed subset
\(
\A\subset\mathcal Q(V)
\)
is called \emph{admissible} if:
\begin{enumerate}[label=\textup{(A\arabic*)}]
\item \(0\in\A\);\label{Cond:origin}

\item \(\A+\mathcal Q_+(V)\subset\A\); \label{Cond:up}

\item \(\A\) is convex; \label{Cond:convex}

\item there exists \(h\in\mathcal Q_{++}(V)\) such that
\[
\Tr_h A\ge0
\qquad
\text{for all }A\in\A.
\]\label{Cond:subharmonic}
\end{enumerate}
\end{definition}

We also define
\(
\Sigma=\partial\A.
\)
In this paper, we consider the following  partial differential subequation:
\begin{equation}
\Hess_{\mathbb F}u\in\A,\label{subequation}
\end{equation}
and equation:
\begin{equation}
\Hess_{\mathbb F}u\in\Sigma.\label{equation}
\end{equation}

We recall the definitions of  viscosity solutions and H\"{o}lder continuity.
\begin{definition}
A function \(u:V\to\mathbb R\) is a viscosity subsolution of \eqref{equation}
if \(u\) is upper semicontinuous and, whenever
\(u-\phi\) has a local maximum at \(\mathbf x_0\in V\) for some
\(\phi\in C^2(V)\), then
\[
\Hess_{\mathbb F}\phi(\mathbf x_0)\in\A.
\]

A viscosity supersolution is defined similarly using lower
semicontinuity, local minima, and \(\overline{\A^{\,c}}\). A viscosity
solution is both a viscosity subsolution and a viscosity supersolution.
\end{definition}
\begin{definition}
For $0 < \alpha \leq 1$, a function $u \colon V \to \mathbb R$ is said to be uniformly $\alpha$-H\"{o}lder continuous if for some norm $\vert\cdot\vert_V$ on $V$,
\begin{equation}
    [u]_{0,\alpha; V}
    =
    \sup_{x \neq y}
    \frac{|u(x) - u(y)|}{|x - y|_V^\alpha}
    < \infty .
\end{equation}
In particular, the finiteness of $[u]_{0,\alpha; V}$ is independent of the norm chosen since all norms on a finite-dimensional vector space are equivalent.
\end{definition}

Fix $V$ as before. Given an admissible set $\A$ satisfying Definition~\ref{def:admissible}, we consider the bounded uniformly $\alpha$-H\"{o}lder continuous solutions of (\ref{equation}). The following example shows that a Liouville theorem for general admissible sets does not hold.
\begin{example}
    Let $W=\mathbb{R}^4$ and $\mathcal{A}=\{A\in\SSS(W):A_{11}\ge0,\ A_{22}+A_{33}+A_{44}\ge0\}$.
Setting $x'=(x_2,x_3,x_4)$ and
\[
u(x_1,x')=\begin{cases}-1,&|x'|\le1,\\-|x'|^{-1},&|x'|>1,\end{cases}
\]
we have $u_{11}\equiv0$, so $D^2u\in\Sigma=\{A\in {\A}:A_{11}(A_{22}+A_{33}+A_{44})=0\}$,
i.e., $u$ is a bounded non-constant solution of $D^2u\in\Sigma$.
 
\end{example}

We explore the following good situation, in which the upper bound of subsolutions ensures the Liouville type results.

\begin{definition}[Terminal set] \label{def:liouvillecone}
An admissible set
\(
\mathcal T \subset\mathcal Q(W)
\)
is called a \emph{terminal set} if every continuous viscosity subsolution of
\[
\Hess_{\mathbb F}u\in\mathcal T
\]
which is bounded above on \(W\) is constant.
\end{definition}

\begin{proposition}
\label{prop:positive-cone-terminal}

Let \(V\) be a finite-dimensional real Euclidean or complex Hermitian
vector space. Then the positive cone \(\mathcal Q_+(V)\) is terminal.

\end{proposition}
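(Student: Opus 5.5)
We must show: every continuous viscosity subsolution $u$ of $\Hess_{\mathbb F}u\in\mathcal Q_+(V)$ that is bounded above on $V$ is constant. The plan is to reduce to one variable, where a bounded-above subsolution is a bounded-above convex function (real case) or a bounded-above subharmonic function on $\mathbb C$ (complex case), and such functions are constant.

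\emph{Step 1: restriction to lines.} Let $u$ be such a subsolution and fix $\mathbf x_0\in V$ and a nonzero $\mathbf v\in V$. Set $w(s)=u(\mathbf x_0+s\mathbf v)$, with $s\in\mathbb R$ in the real case and $s\in\mathbb C$ in the complex case. The claim is that $w$ is a viscosity subsolution of $w''\ge0$ (real case) or $\Delta w\ge0$ on $\mathbb C$ (complex case). Suppose a $C^2$ function $\psi$ touches $w$ from above at $s_0$, with a strict local maximum of $w-\psi$; we may assume strictness by replacing $\psi$ with $\psi+\varepsilon|s-s_0|^2$. Extend $\psi$ transversally:
\[
\phi(\mathbf x_0+s\mathbf v+\mathbf y)=\psi(s)+\frac{1}{\delta}|\mathbf y|^2,
\]
where $\mathbf y$ ranges over a complement of $\mathbb F\mathbf v$. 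The standard penalization argument then applies. Because $u$ is upper semicontinuous and the maximum of $w-\psi$ is strict, $u-\phi$ has local maxima at points $\mathbf x_\delta\to\mathbf x_0+s_0\mathbf v$ as $\delta\to0$. At these points the subsolution property gives $\Hess_{\mathbb F}\phi(\mathbf x_\delta)\ge0$. Evaluating this form on $(\mathbf v,\mathbf v)$ kills the penalty term and yields $\psi''(s_\delta)|\mathbf v|^2\ge0$, or in the complex case $\psi_{s\bar s}(s_\delta)|\mathbf v|^2\ge0$. Letting $\delta\to0$ and using continuity of $\psi''$ gives the viscosity inequality at $s_0$. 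This step needs the most care: one has to choose local neighborhoods properly so that the maxima exist and converge.

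\emph{Step 2: the classical one-variable facts.} A continuous viscosity subsolution of $w''\ge0$ on $\mathbb R$ is convex. To see this, compare $w$ on an interval with the affine interpolant $\ell$ of its endpoint values. If $\max(w-\ell)>0$ at an interior point, touching from above by $\ell+c-\varepsilon(s-s_1)^2$ produces a test function with negative second derivative, a contradiction. A convex function on $\mathbb R$ that is bounded above is constant.

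In the complex case, viscosity subharmonicity on $\mathbb C$ is equivalent to classical subharmonicity for continuous functions, by the same comparison argument against harmonic functions on discs with $\varepsilon|s|^2$ perturbations. Bounded-above subharmonic functions on $\mathbb C$ are constant by the classical Liouville theorem. One proof compares with $\varepsilon\log(|s|/R)$ on annuli and lets $\varepsilon\to0$, which shows $w\le \max_{|s|=r} w$ for all $r$. Hence the maximum is attained in the interior and the strong maximum principle applies.

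\emph{Step 3: conclusion.} By Steps 1 and 2, $u$ is constant on every affine line through every point. Any two points of $V$ lie on a common line, so $u$ is constant on $V$.
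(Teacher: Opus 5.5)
Your proposal is correct and follows the same route as the paper: restrict $u$ to $\mathbb F$-affine lines, note that the restrictions are convex (real case) or subharmonic on $\mathbb C$ (complex case), and apply the one-variable Liouville theorem. The only difference is in Step~1. The paper cites the Harvey--Lawson restriction theorem there, whereas you prove the restriction directly with the transversal penalization $\psi(s)+\delta^{-1}|\mathbf y|^2$; this works cleanly because the penalty contributes nothing to $\Hess_{\mathbb F}\phi(\mathbf v,\mathbf v)$.
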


\begin{proof}By the restriction theorem for subequations
\autocite{HarveyLawsonRestriction},
the restriction of \(u\) to every \(\mathbb F\)-affine line
\(\ell\subset V\) is a viscosity
\(\mathcal Q_+(\ell)\)-subsolution. Therefore, \(u\)
restricts itself  to a convex function on every real affine line (respectively,
a subharmonic function on every complex affine line). Since these
restrictions are bounded above, they are constant. Therefore, \(u\) is
constant.
\end{proof}

\autoref{prop:positive-cone-terminal} establishes the existence of terminal sets, which are a special type of admissible set. In general, terminal sets are not unique for a given $V$. Their geometry will be discussed in \autoref{Appendix1}.

To connect the notions of general admissible sets and terminal sets, we introduce the following reduction procedure, which is our central geometric construction.

For \(A\in\mathcal Q(V)\), define
\begin{equation}
\ker A
=
\{\mathbf v\in V:A(\mathbf v,\mathbf w)=0
\text{ for all }\mathbf w\in V\}.
\end{equation}

Let \(N\subset V\) be a proper \(\mathbb F\)-linear subspace, and let
\(
q_N:V\to V/N
\)
be the quotient map. If \(N\subset\ker A\), then \(A\) descends
uniquely to 
\(
A_{/N}\in\mathcal Q(V/N)
\)
such that
\[
A=q_N^*A_{/N}.
\]
For any \(S\subset\mathcal Q(V)\), define
\[
S_{/N}
=
\{B\in\mathcal Q(V/N):q_N^*B\in S\}.
\]
Thus,
\[
\A_{/N}
=
\{B:q_N^*B\in\A\},
\qquad
\Sigma_{/N}
=
\{B:q_N^*B\in\Sigma\}.
\]

Note that \(\Sigma_{/N}\) is defined as the reduction of \(\Sigma\), not
a priori as \(\partial(\A_{/N})\).

\begin{lemma}\label{lem:admissibilityofquotient}
If \(\A\) is admissible, then \(\A_{/N}\) is admissible for every
\(\mathbb F\)-linear subspace \(N\subset V\).
\end{lemma}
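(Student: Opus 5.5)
The plan is to verify the four conditions of Definition~\ref{def:admissible} for \(\A_{/N}=\{B\in\mathcal Q(V/N):q_N^*B\in\A\}\) one at a time. Everything rests on the pullback map \(q_N^*:\mathcal Q(V/N)\to\mathcal Q(V)\), \(B\mapsto B(q_N\cdot,q_N\cdot)\). It is \(\mathbb R\)-linear and continuous, and it sends positive semidefinite forms to positive semidefinite forms, since \(q_N^*B(\mathbf v,\mathbf v)=B(q_N\mathbf v,q_N\mathbf v)\ge0\).

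\emph{Closedness.} \(\A_{/N}=(q_N^*)^{-1}(\A)\) is the preimage of a closed set under a continuous map, so it is closed.

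\emph{Condition \ref{Cond:origin}.} We have \(q_N^*0=0\in\A\), so \(0\in\A_{/N}\).

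\emph{Condition \ref{Cond:up}.} Take \(B\in\A_{/N}\) and \(P\in\mathcal Q_+(V/N)\). Then
\[
q_N^*(B+P)=q_N^*B+q_N^*P\in\A+\mathcal Q_+(V)\subset\A .
\]

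\emph{Condition \ref{Cond:convex}.} Convexity follows because the preimage of a convex set under a linear map is convex.

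\emph{Condition \ref{Cond:subharmonic}.} This is the only step needing a construction. Take \(h\in\mathcal Q_{++}(V)\) from \ref{Cond:subharmonic} for \(\A\).

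Let \(N^{\perp_h}\) be the \(h\)-orthogonal complement of \(N\). The map \(q_N\) restricts to an \(\mathbb F\)-linear isomorphism \(N^{\perp_h}\to V/N\). Transport \(h|_{N^{\perp_h}}\) through this isomorphism to get a form \(\bar h\in\mathcal Q_{++}(V/N)\).

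Choose an \(h\)-orthonormal basis \(e_1,\dots,e_m\) of \(N^{\perp_h}\) and complete it by an \(h\)-orthonormal basis \(e_{m+1},\dots,e_n\) of \(N\). Then \(q_Ne_1,\dots,q_Ne_m\) is a \(\bar h\)-orthonormal basis of \(V/N\). Since \(q_Ne_\alpha=0\) for \(\alpha>m\), the pullback \(q_N^*B\) vanishes on the \(N\)-directions, and
\[
\Tr_h(q_N^*B)=\sum_{\alpha\le m}B(q_Ne_\alpha,q_Ne_\alpha)=\Tr_{\bar h}B .
\]
Hence for \(B\in\A_{/N}\) we get \(\Tr_{\bar h}B=\Tr_h(q_N^*B)\ge0\), which is \ref{Cond:subharmonic} for \(\A_{/N}\).

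The proof should also note two points. In the complex case the orthogonal complement is taken with respect to the Hermitian form, so it is a complex subspace and the isomorphism is \(\mathbb C\)-linear. The degenerate case \(N=V\) gives \(V/N=0\), where \(\mathcal Q(V/N)=\{0\}\) and everything is trivial; the statement otherwise concerns proper \(N\).

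The only mild obstacle is producing \(\bar h\) in \ref{Cond:subharmonic}, which requires splitting \(V=N^{\perp_h}\oplus N\); the other three conditions are immediate from the linearity and positivity of \(q_N^*\).
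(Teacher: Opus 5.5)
Your proof is correct and follows the same route as the paper: closedness and conditions \ref{Cond:origin}--\ref{Cond:convex} come from $q_N^*$ being linear, continuous and positivity-preserving, and \ref{Cond:subharmonic} comes from transporting $h|_{N^{\perp_h}}$ to $V/N$ through $q_N$, so that the trace of $B$ in the induced metric equals the $h$-trace of $q_N^*B$. Your adapted orthonormal basis computation makes explicit the trace identity that the paper only asserts in \eqref{00}.
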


\begin{proof}
The properties~\ref{Cond:origin}, \ref{Cond:up} and \ref{Cond:convex} of $\A_{/N}$
follow immediately from the corresponding properties of \(\A\).

If \(h\in\mathcal Q_{++}(V)\) satisfies
\(
\Tr_h A\ge0\ 
\text{for all }A\in\A,\)
then $h$ induces a natural isomorphism between $V/N$ and $(N^\perp, h^\perp)$, the $h$-orthogonal complement of $N\subset V$ with restricted metric. Therefore, there exists a canonical
\(
h_{/N}\in\mathcal Q_{++}(V/N)
\)
such that
\begin{equation}\label{00}
    \Tr_{h_{/N}}B
=
\Tr_h(q_N^*B)
\end{equation}
for all \(B\in\mathcal Q(V/N)\). Since $q^*_N(\A_{/N})\subset\A$, \eqref{00} implies that
\(
\Tr_{h_{/N}}B\ge0
\
\text{for all }B\in\A_{/N}.
\)
\end{proof}

We now introduce a geometric notion that encodes the Liouville property for the Hessian equation.
 
\begin{definition}\label{defn:Liouville-admissibility}
An admissible set
\(\A\subset\mathcal Q(V)\)
is called \emph{Liouville admissible} if, for every nonzero proper
\(\mathbb F\)-linear subspace \(N\subset V\), one has
\[
\left\{
\begin{array}{ll}
\text{either} & \Sigma_{/N}=\partial_{V/N}(\A_{/N}),\\[4pt]
\text{or} & \A_{/N}\text{ is contained in a terminal set in }\mathcal Q(V/N).
\end{array}
\right.
\]
\end{definition}

Thus, for a Liouville admissible set, every quotient reduction of it either preserves the boundary equation structure or collapses into a terminal set.

Finally, we present our main result, which is an alternative form of~\autoref{thm:intro-main}.

\begin{theorem}
\label{thm:main}The notation is set as above.
Let \(0<\alpha\le 1\), and let
\(\A\subset \mathcal Q(V)\) be admissible. Then \(\A\) is Liouville admissible if and only if
every bounded globally \(C^{0,\alpha}\) entire viscosity solution of
\(\Hess_{\mathbb F}u\in\Sigma\) is constant.
\end{theorem}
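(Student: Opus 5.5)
We prove the two implications separately. The converse (rigidity forces Liouville admissibility) is constructive, and we treat it first. Suppose $\A$ is not Liouville admissible. Then there is a nonzero proper subspace $N\subset V$ such that $\Sigma_{/N}\neq\partial_{V/N}(\A_{/N})$ and $\A_{/N}$ is contained in no terminal set. In particular $\A_{/N}$ itself is not terminal. So there is a nonconstant continuous subsolution $w$ of $\Hess_{\mathbb F}w\in\A_{/N}$ on $W=V/N$ that is bounded above.

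We first make $w$ bounded and regular. Replacing $w$ by $\max(w,-M)$ keeps it a subsolution, since $\A_{/N}$ contains $0$ and a maximum of subsolutions is a subsolution. For $M$ large it stays nonconstant. Next we mollify: because $\A_{/N}$ is closed, convex and translation invariant, a convolution $w*\rho_\epsilon$ is a smooth subsolution with $\Hess w_\epsilon\in\A_{/N}$ pointwise. Its derivatives are bounded by $C_\epsilon\sup|w|$, so it is globally Lipschitz and hence globally $C^{0,\alpha}$ after bounding by the sup.

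It remains to land on $\Sigma$. Since $\Sigma_{/N}\neq\partial(\A_{/N})$, and $q_N^*\partial(\A_{/N})\subset\partial\A$ always holds (an interior point of $\A$ lying in the image pulls back to the interior of $\A_{/N}$), the inclusion must fail the other way. Hence some $B_0\in\Sigma_{/N}$ lies in the interior of $\A_{/N}$. The key observation is that then every $B\in\A_{/N}$ satisfies $q_N^*B\in\partial\A$. Indeed, the face structure is affine: $q_N^*(\A_{/N})=\A\cap q_N^*\mathcal Q(W)$, and if a relative interior point of this convex slice lies on $\partial\A$, then a supporting hyperplane of $\A$ at it contains the whole slice. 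So $u=w_\epsilon\circ q_N$ satisfies $\Hess_{\mathbb F}u=q_N^*\Hess w_\epsilon\in\Sigma$ classically. It is bounded, Hölder and nonconstant, which contradicts (2). We must check that the supporting-hyperplane argument is correct when $\A$ has nonempty interior; this follows from \ref{Cond:up}.

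For the main direction we induct on $\dim V$. Take a bounded $C^{0,\alpha}$ viscosity solution $u$ and assume $u$ is nonconstant. We use a Liouville blow-down/compactness scheme in the style of Dinew--Ko{\l}odziej:

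\begin{enumerate}
\item Let $\Theta=\sup_{x\ne y}|u(x)-u(y)|/|x-y|^\alpha>0$. We pick near-extremal pairs $x_k,y_k$ and rescale $u_k(z)=\bigl(u(x_k+r_kz)-u(x_k)\bigr)/(\Theta r_k^\alpha)$ with $r_k=|x_k-y_k|$. The equation is invariant under translation and under positive rescaling of $u$ together with dilation, since $\A$ is a cone in the relevant sense; if $\A$ is not a cone, we replace it by the asymptotic cones arising from $r_k\to\infty$ or $0$, which needs care.
\item By Arzel\`a--Ascoli and stability of viscosity solutions, a subsequence converges to a limit $v$ that attains its Hölder seminorm. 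Boundedness of $u$ forces $r_k\to0$ or a degenerate direction, and the extremality yields a maximal-type principle: $v$ is constant along some nonzero subspace $N$, obtained from the kernel of a touching quadratic via comparison with paraboloids, $\sup$-convolution, and the strong maximum principle for \ref{Cond:subharmonic}-subharmonic functions.
\item Then $v$ descends to $V/N$ and solves the reduced problem. If $\Sigma_{/N}=\partial(\A_{/N})$, the descended function solves the boundary equation for the admissible set $\A_{/N}$, which is again Liouville admissible because quotients of quotients are quotients. By induction it is constant, a contradiction. Otherwise $\A_{/N}$ lies in a terminal set, so the bounded-above subsolution $v$ is constant, again a contradiction.
\end{enumerate}

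The main obstacle is step 2: producing the invariant subspace $N$ from the viscosity limit using only the Hölder bound.
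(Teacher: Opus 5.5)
Your converse direction is correct and is essentially the paper's argument: the boundary dichotomy via a supporting hyperplane, then truncation, mollification and pullback. One caveat: mollifying a merely viscosity subsolution is not a formality. The paper proves it separately as property \ref{PropConsCond:P5} of Proposition~\ref{prop:Cond-properties}, using upper envelopes and an ABP-type estimate.

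The forward direction has a genuine gap, and it starts with the rescaling in Step~1. Since $u$ is bounded, $r_k^\alpha\le \operatorname{osc}u/(\Theta-o(1))$, so after a subsequence either $r_k\to r_\infty>0$ or $r_k\to0$.

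In the second case $u_k$ solves $\operatorname{Hess}_{\mathbb F}u_k\in\lambda_k\Sigma$ with $\lambda_k=r_k^{2-\alpha}/\Theta\to0$. The limit therefore solves the boundary equation of the recession cone $\bigcap_{\lambda>0}\lambda\mathcal A$, and the hypothesis says nothing about that cone. Most admissible sets in the paper are not cones, and Liouville admissibility does not pass to recession cones. For example, $\mathcal A=\{A\in\mathcal S(\mathbb R^4):A_{22}+A_{33}+A_{44}\ge e^{-A_{11}}-1\}$ is admissible, and every quotient of it is boundary compatible (use strict convexity of $a\mapsto e^{-a}-1$). Yet its recession cone is $\{A_{11}\ge0,\ A_{22}+A_{33}+A_{44}\ge0\}$, which is exactly the non-Liouville example in \autoref{sec:real-complex-hessian}.

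Moreover, your blow-up limit only satisfies $|v(z)-v(0)|\le|z|^\alpha$, so it is unbounded. In Step~3 you can then apply neither the inductive hypothesis (which concerns bounded solutions) nor terminality (which concerns bounded-above subsolutions).

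In the case $r_k\to r_\infty>0$ no rescaling is needed, but then Step~2 carries the whole theorem and you give no mechanism for it. Attaining the H\"older seminorm at a pair $(\hat x,\hat y)$ gives, by doubling variables, only pointwise matrix information at that pair:
$X\in\mathcal A$, $Y\notin\operatorname{int}\mathcal A$, and $Y-X\ge0$, positive in the direction $\hat x-\hat y$ when $\alpha<1$ (and not even that when $\alpha=1$).
This is contradictory only if $\mathcal A$ is strictly elliptic in that direction. Nothing in your sketch turns it into global invariance of $v$ along a subspace.

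The paper never rescales. It uses only translations and rotations, with the H\"older bound serving purely for Arzel\`a--Ascoli, and it obtains the invariant direction from an energy dichotomy.
\begin{enumerate}
\item Suppose $\int_{B_{r_\ell}(z_\ell)}|\nabla_{\xi_\ell}[w]_{\varepsilon_\ell}|^2\to0$ (with $r_\ell\to\infty$, $\varepsilon_\ell\to0$) along translates where a fixed averaged inequality holds. Then the translated limit is independent of $\xi$ and descends to $V/N$ with $\dim N=1$. It is constant by the dichotomy together with induction or terminality, and this contradicts the inequality in the limit.
\item Otherwise these energies are bounded below. Then the gradient term in $\operatorname{Hess}_{\mathbb F}(\tfrac12[w]_\varepsilon^2)=[w]_\varepsilon\operatorname{Hess}_{\mathbb F}[w]_\varepsilon+\nabla[w]_\varepsilon\otimes\overline{\nabla}[w]_\varepsilon$, averaged at scale $2r$, absorbs $-\tau_0|z|^2$. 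Hence $v=\tfrac12\bigl([\tfrac12[w]_\varepsilon^2]_{2r}+[w]_\beta-\tfrac43-\tau_0|z|^2\bigr)$ is a subsolution on the bounded set $\{w<v\}$. By Cartan's lemma this set contains a near-minimum point of $w$, which contradicts comparison.
\end{enumerate}
The quadratic gain, Cartan's lemma, and comparison on $\{w<v\}$ are the ideas missing from your Step~2.
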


The proof of Theorem~\ref{thm:main} will be given in the final sections, after we have analyzed the geometry of the sets involved.
\section{Geometry Of Admissible Sets}
\label{sec:geometry-liouville-admissible}

We record several geometric properties of admissible sets and of two distinguished classes: terminal sets and Liouville admissible sets. We also investigate their properties under the spectral assumption.

By definition,
\begin{equation}
\label{eq:terminal-liouville-admissible}
\{\text{terminal sets}\}
\subsetneq
\{\text{Liouville admissible sets}\}
\subsetneq
\{\text{admissible sets}\}.
\end{equation}
 
We prove stability of these classes under finite intersections, under
the natural action of the general linear group, and under quotient
reductions.

\subsection{Admissible Sets} We first examine admissible sets.

\begin{lemma}
\label{lem:admissible-stability}

Let \(V\) be a finite-dimensional real Euclidean or complex Hermitian
vector space as in Section~\ref{sec:real-complex-hessian}.
\begin{enumerate}

\item\label{item:admissible-intersection}
If \(\A_1,\ldots,\A_m\subset\mathcal Q(V)\) are admissible, then their intersection
\[
\A:=\bigcap_{j=1}^m\A_j
\]
is admissible.

\item\label{item:admissible-GL}
If \(\A\subset\mathcal Q(V)\) is admissible and
\(\varphi\in GL_{\mathbb F}(V)\), then
\[
\varphi^*\A
:=
\{\varphi^*A: A\in\A\},
\qquad
\varphi^*A(v,w)=A(\varphi v,\varphi w),
\]
is admissible.

\item\label{item:admissible-quotient}
If \(\A\subset\mathcal Q(V)\) is admissible and \(N\subset V\) is an
\(\mathbb F\)-linear subspace, then the quotient reduction
\[
\A_{/N}\subset\mathcal Q(V/N)
\]
is admissible.

\end{enumerate}
\end{lemma}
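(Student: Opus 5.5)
We verify the four conditions \ref{Cond:origin}--\ref{Cond:subharmonic} of Definition~\ref{def:admissible} for each construction in turn. Closedness is part of the definition, so we check it as well. Part~\eqref{item:admissible-quotient} is exactly Lemma~\ref{lem:admissibilityofquotient}, so we only cite it; if one wants closedness of \(\A_{/N}\) made explicit, it follows because \(\A_{/N}\) is the preimage of the closed set \(\A\) under the continuous linear map \(q_N^*\).

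For part~\eqref{item:admissible-intersection}, closedness, \ref{Cond:origin}, \ref{Cond:up} and \ref{Cond:convex} pass directly to intersections, since each is preserved by intersecting. The only substantive point is \ref{Cond:subharmonic}, because each \(\A_j\) may come with a different \(h_j\). Here we simply take \(h=h_1\): since \(\A\subset\A_1\), we get \(\Tr_{h_1}A\ge0\) for all \(A\in\A\). So there is no real obstacle, although one must notice that no common \(h\) is needed.

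For part~\eqref{item:admissible-GL}, the map \(A\mapsto\varphi^*A\) is a linear isomorphism of \(\mathcal Q(V)\). Hence \(\varphi^*\A\) is closed and convex and contains \(0\). Moreover \(\varphi^*\) maps \(\mathcal Q_+(V)\) onto itself, since \(\varphi^*P(v,v)=P(\varphi v,\varphi v)\ge0\) and \(\varphi\) is invertible. This gives
\[
\varphi^*\A+\mathcal Q_+(V)=\varphi^*\bigl(\A+\mathcal Q_+(V)\bigr)\subset\varphi^*\A .
\]

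The step needing a small computation is \ref{Cond:subharmonic}. Given \(h\) for \(\A\), set \(\tilde h=\varphi^*h\), which lies in \(\mathcal Q_{++}(V)\). Let \(\{e_\alpha\}\) be \(h\)-orthonormal. Then \(\{\varphi^{-1}e_\alpha\}\) is \(\tilde h\)-orthonormal, and
\[
\Tr_{\tilde h}(\varphi^*A)=\sum_\alpha \varphi^*A(\varphi^{-1}e_\alpha,\varphi^{-1}e_\alpha)=\sum_\alpha A(e_\alpha,e_\alpha)=\Tr_hA\ge0 .
\]
In the complex case the same computation holds with Hermitian forms and unitary frames.
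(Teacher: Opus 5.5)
Your proposal is correct and takes the same approach as the paper. Parts (1) and (2) are direct checks of Definition~\ref{def:admissible}, and for (2) the paper cites only the identity $\varphi^*\mathcal Q_+(V)=\mathcal Q_+(V)$; part (3) is Lemma~\ref{lem:admissibilityofquotient}. What you add are the details the paper leaves implicit: taking $h=h_1$ for the intersection, using the pulled-back metric $\varphi^*h$ for the trace condition, and noting that $\A_{/N}$ is closed as a preimage under $q_N^*$.
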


\begin{proof}
Part~\ref{item:admissible-intersection} follows immediately from
Definition~\ref{def:admissible}. Part~\ref{item:admissible-GL}
follows from Definition~\ref{def:admissible} together with the identity
\[
\varphi^*\mathcal Q_+(V)=\mathcal Q_+(V).
\]
Part~\ref{item:admissible-quotient} is
Lemma~\ref{lem:admissibilityofquotient}.

\end{proof}

\subsection{Terminal Sets} Next, we consider terminal sets. We start with some basic properties. The proof of the following lemma is a direct check of the definitions, so we omit it here.

\begin{lemma}
\label{lem:terminal-stability}

Let \(V\) be a finite-dimensional real Euclidean or complex Hermitian
vector space.

\begin{enumerate}

\item\label{item:terminal-subset}
If \(\TT\subset\mathcal Q(V)\) is terminal and
\(
\A\subset\TT
\)
is admissible, then \(\A\) is terminal.

\item\label{item:terminal-GL}
If \(\TT\subset\mathcal Q(V)\) is terminal and
\(\varphi\in GL_{\mathbb F}(V)\), then
\(
\varphi^*\TT
\)
is terminal.

\end{enumerate}

\end{lemma}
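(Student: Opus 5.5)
For part \ref{item:terminal-subset}, I will compare subsolution classes directly. Let \(u\) be a continuous viscosity subsolution of \(\Hess_{\mathbb F}u\in\A\) that is bounded above on \(V\). Suppose \(u-\phi\) has a local maximum at \(\mathbf x_0\) for some \(\phi\in C^2(V)\). The definition gives \(\Hess_{\mathbb F}\phi(\mathbf x_0)\in\A\subset\TT\). So \(u\) is also a continuous viscosity subsolution of \(\Hess_{\mathbb F}u\in\TT\), and it is still bounded above. Because \(\TT\) is terminal, \(u\) is constant. Since \(\A\) is admissible by hypothesis, this shows \(\A\) is terminal.

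For part \ref{item:terminal-GL}, I will use a linear change of variables. Lemma~\ref{lem:admissible-stability}\ref{item:admissible-GL} already shows that \(\varphi^*\TT\) is admissible. Let \(u\) be a continuous viscosity subsolution of \(\Hess_{\mathbb F}u\in\varphi^*\TT\) that is bounded above. Set \(v:=u\circ\varphi^{-1}\). Then \(v\) is continuous and bounded above on \(V\).

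The key identity is the chain rule for an \(\mathbb F\)-linear map \(\psi\):
\[
\Hess_{\mathbb F}(\phi\circ\psi)(\mathbf x)=\psi^*\bigl(\Hess_{\mathbb F}\phi(\psi\mathbf x)\bigr).
\]
In the complex case this holds because \(\psi\) is \(\mathbb C\)-linear, so it preserves the \((1,0)\)/\((0,1)\) splitting.

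Now suppose \(v-\phi\) has a local maximum at \(\mathbf y_0\), with \(\phi\in C^2\). Then \(u-\phi\circ\varphi\) has a local maximum at \(\mathbf x_0=\varphi^{-1}\mathbf y_0\), because \(\varphi\) is a homeomorphism. The subsolution property of \(u\) gives
\[
\varphi^*\bigl(\Hess_{\mathbb F}\phi(\mathbf y_0)\bigr)=\Hess_{\mathbb F}(\phi\circ\varphi)(\mathbf x_0)\in\varphi^*\TT.
\]
Since \(\varphi^*\) is injective on \(\mathcal Q(V)\), this forces \(\Hess_{\mathbb F}\phi(\mathbf y_0)\in\TT\). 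Hence \(v\) is a continuous viscosity subsolution for \(\TT\) that is bounded above, so \(v\) is constant, and therefore \(u=v\circ\varphi\) is constant.

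There is no real obstacle here. The only step needing care is the chain-rule identity in the complex case, which depends on \(\varphi\) being \(\mathbb C\)-linear rather than merely \(\mathbb R\)-linear.
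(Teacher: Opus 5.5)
Your proof is correct and is exactly the direct check of the definitions that the paper has in mind; the paper omits it as routine. Part (1) is the inclusion of subsolution classes, and part (2) is the change of variables \(v=u\circ\varphi^{-1}\) together with the chain rule \(\Hess_{\mathbb F}(\phi\circ\varphi)(\mathbf x)=\varphi^*\bigl(\Hess_{\mathbb F}\phi(\varphi\mathbf x)\bigr)\) for \(\mathbb F\)-linear \(\varphi\) and the injectivity of \(\varphi^*\).
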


For a proper subspace $N\subset V$, denote the quotient map \[q_N:V\to V/N.\] 
\begin{lemma}
\label{lem:pullback-quotient-subsolution}
Let \(\A\subset\mathcal Q(V)\) be admissible and let
\(N\subset V\) be a proper \(\mathbb F\)-linear subspace. If \(v\) is a
viscosity subsolution of
\[
\Hess_{\mathbb F}v\in\A_{/N},
\]
then \(\widetilde v:=v\circ q_N\) is a viscosity subsolution of
\[
\Hess_{\mathbb F}\widetilde v\in\A.
\]
\end{lemma}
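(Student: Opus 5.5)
We argue directly from the definition of viscosity subsolution, choosing a complement of $N$ so that test functions for $\widetilde v$ can be restricted to functions of one fewer set of variables. Semicontinuity is immediate: $\widetilde v=v\circ q_N$ is upper semicontinuous because $v$ is and $q_N$ is continuous.

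Fix an $\mathbb F$-linear complement $W$ of $N$, so $V=W\oplus N$. Then $q_N|_W:W\to V/N$ is an isomorphism, and we identify $V/N$ with $W$ through it. Under this identification $\widetilde v(w+\nu)=v(w)$ for $w\in W$ and $\nu\in N$.

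Let $\phi\in C^2(V)$ be such that $\widetilde v-\phi$ has a local maximum at $\mathbf x_0=w_0+\nu_0$. We need to show $\Hess_{\mathbb F}\phi(\mathbf x_0)\in\A$. The plan has three steps.

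\emph{Step 1: a test function on the quotient.} Restrict to the slice through $\mathbf x_0$ by setting $\psi(w):=\phi(w+\nu_0)$, which is $C^2$ on $W\cong V/N$. For $w$ near $w_0$ we have $v(w)-\psi(w)=\widetilde v(w+\nu_0)-\phi(w+\nu_0)$, so $v-\psi$ has a local maximum at $w_0$. Since $v$ is a viscosity subsolution of $\Hess_{\mathbb F} v\in\A_{/N}$, this gives $\Hess_{\mathbb F}\psi(w_0)\in\A_{/N}$.

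\emph{Step 2: the $N$-directions.} The function $\nu\mapsto\widetilde v(w_0+\nu)=v(w_0)$ is constant in $\nu$. Hence $\nu\mapsto-\phi(w_0+\nu)$ has a local maximum at $\nu_0$, i.e.\ $\phi$ restricted to the affine slice $w_0+N$ has a local minimum there. Therefore the restriction of $\Hess_{\mathbb F}\phi(\mathbf x_0)$ to $N$ is positive semidefinite. In the complex case this uses that the complex Hessian is the average of real Hessians along the complex directions, so a real local minimum still gives a positive semidefinite complex Hessian on $N$.

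\emph{Step 3: absorbing the cross terms.} Write $H:=\Hess_{\mathbb F}\phi(\mathbf x_0)$ in block form with respect to $W\oplus N$,
\[
H=\begin{pmatrix} P & C\\ C^* & R\end{pmatrix},
\]
where $P=\Hess_{\mathbb F}\psi(w_0)$ and $R\ge 0$ by Step 2. Note $q_N^*P$ is the form $\operatorname{diag}(P,0)$, which lies in $\A$ by Step 1 and the definition of $\A_{/N}$. We would like to write $H=\operatorname{diag}(P,0)+Q$ with $Q\ge0$ and then apply \ref{Cond:up}. The obstacle is the off-diagonal block $C$: in general $\begin{pmatrix}0&C\\C^*&R\end{pmatrix}$ is not positive semidefinite.

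\emph{Handling the obstacle.} We exploit the freedom in the test function. The function $\widetilde v$ is constant along $N$, so for every $\varepsilon>0$ the maximum is preserved under the modified test function
\[
\phi_\varepsilon(w+\nu)=\phi(w+\nu)+\varepsilon|\nu-\nu_0|^2,
\]
and $\widetilde v-\phi_\varepsilon$ still has a local maximum at $\mathbf x_0$. This only replaces $R$ by $R+\varepsilon I$ and does not remove $C$, so it is not enough on its own.

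The cleaner fix is a shear: for any $\mathbb F$-linear map $L:W\to N$, test along the graph $\nu=\nu_0+L(w-w_0)$. Define
\[
\psi_L(w):=\phi\bigl(w+\nu_0+L(w-w_0)\bigr).
\]
Because $\widetilde v$ does not depend on the $N$-component, $v-\psi_L$ still has a local maximum at $w_0$, so Step 1 gives $\Hess_{\mathbb F}\psi_L(w_0)\in\A_{/N}$. Its Hessian is
\[
\Hess_{\mathbb F}\psi_L(w_0)=P+CL+L^*C^*+L^*RL.
\]
If $R>0$, choose $L=-R^{-1}C^*$, which turns this into the Schur complement $P-CR^{-1}C^*\in\A_{/N}$. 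Then
\[
H=q_N^*\bigl(P-CR^{-1}C^*\bigr)+\begin{pmatrix} CR^{-1}C^* & C\\ C^* & R\end{pmatrix},
\]
and the second matrix is positive semidefinite, since it equals $\begin{pmatrix}CR^{-1/2}\\R^{1/2}\end{pmatrix}\begin{pmatrix}CR^{-1/2}\\R^{1/2}\end{pmatrix}^{\!*}$. By \ref{Cond:up}, $H\in\A$.

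If $R$ is only semidefinite, first replace $\phi$ by $\phi_\varepsilon$. This replaces $R$ by $R+\varepsilon I>0$, and the argument above gives
\[
H+\operatorname{diag}(0,\varepsilon I)\in\A.
\]
Since $\A$ is closed, letting $\varepsilon\to0$ yields $H\in\A$.

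The step I expect to require the most care is this shear/Schur-complement step, together with checking that the complex case works verbatim with $\mathbb F$-linear $L$ and Hermitian blocks.
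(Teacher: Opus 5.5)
Your proof is correct. It reaches the same final decomposition as the paper, but by a different and more elementary route.

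The paper fixes $G\in\mathcal Q_{++}(V)$ and invokes the Crandall--Ishii--Lions theorem for the linear submersion $q_N$. This yields, for each $\varepsilon>0$, some $B_\varepsilon\in\A_{/N}$ with $q_N^*B_\varepsilon\le\Hess_{\mathbb F}\phi(\mathbf x_0)+\varepsilon G$. The paper then concludes by \ref{Cond:up} and closedness of $\A$, exactly as you do.

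You construct $B_\varepsilon$ explicitly instead. Since $\widetilde v$ is constant along $N$, every affine section $w\mapsto w+\nu_0+L(w-w_0)$ gives a legitimate test function on the quotient. The Schur-complement choice $L=-(R+\varepsilon I)^{-1}C^*$ makes $\Hess_{\mathbb F}\phi_\varepsilon(\mathbf x_0)-q_N^*B_\varepsilon$ positive semidefinite.

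Your version uses only the definition of a viscosity subsolution and linear algebra. It also shows that the perturbation is needed only in the $N$-block, and not at all when $R>0$. Finally, it replaces the paper's terse appeal to the theorem on sums, whose application here is not spelled out, with a transparent computation. The paper's argument is shorter, but only because it outsources this step to a general tool.

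Your complex-case checks are right:
\begin{itemize}
\item Under a $\mathbb C$-affine map $F$, the complex Hessian pulls back by congruence, $F^*HF$ in your matrix convention. This is why $L$ must be $\mathbb C$-linear, and $-(R+\varepsilon I)^{-1}C^*$ is.
\item A real local minimum on the complex subspace $N$ forces the $N$-block of the complex Hessian to be positive semidefinite.
\end{itemize}
The factor $2\varepsilon$ rather than $\varepsilon$ coming from $|\nu-\nu_0|^2$ in the real case is immaterial.
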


\begin{proof}
Let \(\phi\in C^2(V)\) touch \(\widetilde v\) from above at
\(x_0\in V\). Fix \(G\in\mathcal Q_{++}(V)\). Since
\(q_N:V\to V/N\) is a linear submersion, the
Crandall--Ishii--Lions theorem \autocite[Theorem 3.2]{MR1118699} implies that, for every
\(\varepsilon>0\), there exists \(B_\varepsilon\in\A_{/N}\) such that
\[
q_N^*B_\varepsilon
\le
\Hess_{\mathbb F}\phi(x_0)+\varepsilon G
\]
as quadratic forms on \(V\). Equivalently, there exists some \(P_\varepsilon\in\mathcal Q_+(V)\) such that
\[
\Hess_{\mathbb F}\phi(x_0)+\varepsilon G
=
q_N^*B_\varepsilon+P_\varepsilon.
\]

Since \(B_\varepsilon\in\A_{/N}\), one has
\(q_N^*B_\varepsilon\in\A\). By ellipticity,
\(
q_N^*B_\varepsilon+P_\varepsilon\in\A.
\)
Hence,
\(
\Hess_{\mathbb F}\phi(x_0)+\varepsilon G\in\A.
\)
Letting \(\varepsilon\downarrow0\) and using that \(\A\) is closed, we
obtain
\[
\Hess_{\mathbb F}\phi(x_0)\in\A.
\]
Therefore, \(\widetilde v\) is an \(\A\)-subsolution.
\end{proof}

\begin{proposition}
\label{prop:liouville-quotient}
Let \(\LLL\subset\mathcal Q(V)\) be a terminal set and let
\(N\subset V\) be a proper \(\mathbb F\)-linear subspace. Then
\[
\LLL_{/N}
=
\{B\in\mathcal Q(V/N):q_N^*B\in\LLL\}
\]
is again a terminal set.
\end{proposition}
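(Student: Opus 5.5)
The plan has two parts: first check that \(\LLL_{/N}\) is admissible, then verify the terminal property by pulling subsolutions back along \(q_N\).

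Admissibility is immediate from Lemma~\ref{lem:admissibilityofquotient}, since a terminal set is admissible by definition. So \(\LLL_{/N}\subset\mathcal Q(V/N)\) is closed, contains the origin, is convex, is stable under adding \(\mathcal Q_+(V/N)\), and satisfies the trace condition with respect to \(h_{/N}\).

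For the terminal property, let \(v\) be a continuous viscosity subsolution of \(\Hess_{\mathbb F}v\in\LLL_{/N}\) on \(V/N\) that is bounded above. Set \(\widetilde v:=v\circ q_N\).
\begin{enumerate}
\item Since \(q_N\) is continuous, \(\widetilde v\) is continuous on \(V\).
\item We have \(\sup_V\widetilde v=\sup_{V/N}v<\infty\), because \(q_N\) is surjective.
\item By Lemma~\ref{lem:pullback-quotient-subsolution}, applied with the admissible set \(\LLL\), the function \(\widetilde v\) is a viscosity subsolution of \(\Hess_{\mathbb F}\widetilde v\in\LLL\).
\end{enumerate}
Since \(\LLL\) is terminal, \(\widetilde v\) is constant on \(V\). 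Surjectivity of \(q_N\) then gives that \(v\) is constant on \(V/N\). Hence \(\LLL_{/N}\) is terminal.

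The only analytic input is Lemma~\ref{lem:pullback-quotient-subsolution}, which is already proved; the rest is bookkeeping. One degenerate case needs attention: \(N=0\). There \(q_N\) is an isomorphism and \(\LLL_{/N}\) is just \(\LLL\) transported along it, so this case is covered by the same argument, or equivalently by Lemma~\ref{lem:terminal-stability}.
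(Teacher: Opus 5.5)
Your proposal is correct and matches the paper's proof step for step. You get admissibility from Lemma~\ref{lem:admissibilityofquotient}, pull back a bounded-above continuous subsolution along \(q_N\) via Lemma~\ref{lem:pullback-quotient-subsolution}, invoke terminality of \(\LLL\), and conclude by surjectivity of \(q_N\). Your remark on the case \(N=0\) is harmless, since the general argument already covers it.
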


\begin{proof}
By Lemma~\ref{lem:admissibilityofquotient},
\(\LLL_{/N}\) is admissible. Let $v$
be a continuous bounded-above viscosity subsolution of \[
\Hess_{\mathbb F}v\in\LLL_{/N}.
\]
By \autoref{lem:pullback-quotient-subsolution},
\(
\widetilde v:=v\circ q_N
\)
is a continuous viscosity subsolution of
\(
\Hess_{\mathbb F}\widetilde v\in\LLL
\)
on \(V\). Since \(v\) is bounded above, so is \(\widetilde v\). Because
\(\LLL\) is a terminal set, \(\widetilde v\) is constant. Since
\(q_N\) is surjective, \(v\) is constant. Therefore, \(\LLL_{/N}\) is a
terminal set.
\end{proof}

\begin{lemma}
\label{lem:low-dimensional-admissible-terminal}

Let \(V\) be real with \(\dim_{\mathbb R}V\le2\), or complex
with \(\dim_{\mathbb C}V=1\). Then every admissible set
\[
\A\subset\mathcal Q(V)
\]
is a terminal set.

\end{lemma}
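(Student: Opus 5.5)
The plan is to reduce everything to the Laplacian and then use that the plane is parabolic. Let \(\A\subset\mathcal Q(V)\) be admissible, and let \(h\in\mathcal Q_{++}(V)\) be the form from \ref{Cond:subharmonic}. Then
\[
\A\subset\HH_h:=\{A\in\mathcal Q(V):\Tr_h A\ge0\}.
\]
The half-space \(\HH_h\) is itself admissible: it is closed and convex, contains \(0\), and satisfies \(\HH_h+\mathcal Q_+(V)\subset\HH_h\) because \(\Tr_h P\ge0\) for \(P\ge0\); moreover \(h\) itself witnesses \ref{Cond:subharmonic}. By \autoref{lem:terminal-stability}\,\ref{item:terminal-subset}, it therefore suffices to show that \(\HH_h\) is terminal.

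Next we normalize \(h\). Choose \(\varphi\in GL_{\mathbb F}(V)\) taking a standard basis to an \(h\)-orthonormal basis. Then \(\varphi^*\HH_h\) is the half-space defined by the standard trace, and \(\HH_h=(\varphi^{-1})^*\varphi^*\HH_h\). By \autoref{lem:terminal-stability}\,\ref{item:terminal-GL} we may therefore assume that \(V=\R^n\) with \(n\le2\), or \(V=\C\), and that \(h\) is standard. The case \(n=0\) is trivial.

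In these cases a continuous viscosity subsolution of \(\Hess_{\mathbb F}u\in\HH_h\) is exactly a viscosity subsolution of a positive multiple of the Laplacian:
\begin{itemize}
\item for \(V=\R\) this is \(u''\ge0\);
\item for \(V=\R^2\) this is \(\Delta u\ge0\);
\item for \(V=\C\) it is \(u_{z\bar z}=\tfrac14\Delta u\ge0\) on \(\R^2\).
\end{itemize}
I would invoke the standard fact that continuous viscosity subsolutions of \(\Delta u\ge0\) are classical subharmonic functions, i.e.\ they satisfy the comparison principle against harmonic functions on bounded domains. This can be proved directly by perturbing a harmonic majorant \(w\) to \(w+\varepsilon|x|^2\) minus a constant, which gives a strict classical supersolution that cannot touch \(u\) from above. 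In dimension one, the same argument with affine functions shows that \(u\) is convex. A convex function on \(\R\) that is bounded above is constant.

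For \(\R^2\), fix \(r>0\) and set \(m(r)=\max_{|y|=r}u\) and \(M=\sup u<\infty\). For \(R>r\), the function
\[
w_R(x)=m(r)+(M-m(r))\frac{\log(|x|/r)}{\log(R/r)}
\]
is harmonic on the annulus \(r<|x|<R\) and dominates \(u\) on its boundary. By comparison, \(u\le w_R\) on the annulus. Letting \(R\to\infty\) gives \(u(x)\le m(r)\) for all \(|x|\ge r\). Hence \(\sup u\) equals the value of \(u\) at some point of the circle \(|y|=r\), so \(u\) attains its global maximum at an interior point. The strong maximum principle for subharmonic functions, which follows from the sub-mean-value property, then forces \(u\) to be constant.

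The main obstacle I expect is not the geometry but the analytic bookkeeping: justifying that viscosity subsolutions of the trace half-space equation enjoy the comparison principle with harmonic functions and the strong maximum principle. I would handle this via the strict-supersolution perturbation described above, or by citing the equivalence of viscosity and potential-theoretic subharmonicity. The dimension restriction enters only through the existence of the logarithmic barrier \(w_R\), whose boundary values on \(|x|=R\) can reach \(M\) while it stays below \(m(r)+\varepsilon\) on fixed compact sets as \(R\to\infty\). In \(\R^n\) with \(n\ge3\) no such barrier exists, which is consistent with the counterexample given earlier.
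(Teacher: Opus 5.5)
Your proposal is correct and follows the paper's route: \ref{Cond:subharmonic} makes every $\A$-subsolution subharmonic with respect to $h$, and the conclusion is the classical Liouville theorem for bounded-above subharmonic functions in real dimension at most $2$, which the paper simply cites and you prove via the logarithmic barrier. One small slip: in your comparison sketch the strict classical supersolution should be $w-\varepsilon|x|^2$ (plus a constant), not $w+\varepsilon|x|^2$, because the latter has positive Laplacian and can touch $u$ from above without contradiction.
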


\begin{proof}

The zero-dimensional case is trivial. 

In real dimension $1$, $\mathcal{A}=\mathcal Q_+(V)=[0,+\infty)$. Entire \(\A\)-subsolutions are convex and hence must be constant whenever bounded above.

If $\dim_\mathbb{R}V=2$ or $\dim_\mathbb{C}V=1$, then any $\A$-subsolution is subharmonic with respect to some $h\in\mathcal{Q}_{++}(V)$. Hence, the result follows from the classical Liouville theorem in real dimension $2$. 
\end{proof}

\subsection{Liouville Admissible Sets}
\begin{lemma}
\label{lem:low-dimensional-liouville}
Let \(V\) be real with \(\dim_{\mathbb R}V\le3\), or complex with
\(\dim_{\mathbb C}V\le2\). Then every admissible set
\(
\A\subset\mathcal Q(V)
\)
is Liouville admissible.
\end{lemma}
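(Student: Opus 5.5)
Since the definition of Liouville admissibility only involves the quotient sets \(\A_{/N}\) for nonzero proper \(\mathbb F\)-linear subspaces \(N\subset V\), the plan is to verify the \emph{second} alternative of Definition~\ref{defn:Liouville-admissibility} for every such \(N\). We will not need to analyse boundaries at all. The key observation is that in the stated dimension ranges every proper quotient \(V/N\) falls into the low-dimensional regime of Lemma~\ref{lem:low-dimensional-admissible-terminal}.

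\textbf{Step 1: dimension count.} In the real case with \(\dim_{\mathbb R}V\le 3\), a nonzero proper subspace \(N\) has \(\dim N\ge1\), so \(\dim_{\mathbb R}V/N\le 2\). In the complex case with \(\dim_{\mathbb C}V\le 2\), a nonzero proper \(N\) forces \(\dim_{\mathbb C}V=2\) and \(\dim_{\mathbb C}V/N=1\). The complex case with \(\dim_{\mathbb C}V\le1\) has no nonzero proper subspaces, so the condition there is vacuous. The same holds for real dimension \(\le1\).

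\textbf{Step 2: the quotient is terminal.} By Lemma~\ref{lem:admissibilityofquotient}, \(\A_{/N}\subset\mathcal Q(V/N)\) is admissible. By Step 1, \(V/N\) is real of dimension \(\le2\) or complex of dimension \(1\). Lemma~\ref{lem:low-dimensional-admissible-terminal} then says \(\A_{/N}\) is itself a terminal set. In particular, \(\A_{/N}\) is contained in a terminal set, namely itself. Hence the second alternative of Definition~\ref{defn:Liouville-admissibility} holds for every \(N\), and \(\A\) is Liouville admissible.

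\textbf{Main point to check.} The only subtlety is whether Lemma~\ref{lem:low-dimensional-admissible-terminal} applies to \(V/N\) without a chosen metric. It does, because admissibility of \(\A_{/N}\) is intrinsic: condition~\ref{Cond:subharmonic} supplies \(h_{/N}\in\mathcal Q_{++}(V/N)\), and the proof of that lemma uses only this \(h\) (subharmonicity with respect to \(h_{/N}\) in real dimension \(2\), convexity in dimension \(1\)). The zero-dimensional quotient never arises, since \(N\) is proper.
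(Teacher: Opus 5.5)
Your proposal is correct and follows the paper's argument, which is just the one-line observation that this lemma follows directly from Lemma~\ref{lem:low-dimensional-admissible-terminal}. You spell out the details the paper leaves implicit: every nonzero proper quotient $V/N$ is real of dimension $\le 2$ or complex of dimension $1$, so $\A_{/N}$ (admissible by Lemma~\ref{lem:admissibilityofquotient}) is itself terminal, and the terminal alternative of Definition~\ref{defn:Liouville-admissibility} holds for every such $N$.
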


\begin{proof}This is a direct consequence of \autoref{lem:low-dimensional-admissible-terminal}.
\end{proof}

\begin{lemma}
\label{lem:Liouvilleadmissibilityofquotient}
Let \(\A\subset\mathcal Q(V)\) be Liouville admissible, and let
\(N\subset V\) be a proper \(\mathbb F\)-linear subspace. Then
\(\A_{/N}\) is Liouville admissible. 
\end{lemma}
\begin{proof}
By \autoref{lem:admissibilityofquotient}, \(\A_{/N}\) is admissible. 
Set
$\Sigma:=\partial\A.$  

If $\A_{/N}$ is terminal, the proof is finished. So we may assume  
\begin{equation}
\label{eq:quotient-boundary-compatible-at-N}
\Sigma_{/N}
=
\partial(\A_{/N}).
\end{equation} Take any proper subspace $M$ such that $N\subset M\subset V$. The natural isomorphism \[(V/N)/(M/N)\simeq V/M\] identifies  
\begin{equation}
\label{eq:quotient-associativity-A}
(\A_{/N})_{/(M/N)}
=
\A_{/M},\qquad 
(\Sigma_{/N})_{/(M/N)}
=
\Sigma_{/M}.
\end{equation} Since $\A$ is Liouville admissible, 
\[\text{either }\A_{/M} \text{ is terminal,}\quad \text{or }\ \Sigma_{/M}=\partial (\A_{/M}).\]
In the first case, \((\A_{/N})_{/(M/N)}\simeq\A_{/M}\) is terminal. In the second case,\[
\bigl(\partial(\A_{/N})\bigr)_{/(M/N)}
=
(\Sigma_{/N})_{/(M/N)}
=
\Sigma_{/M}
=
\partial(\A_{/M})
=
\partial\bigl((\A_{/N})_{/(M/N)}\bigr).
\]
Thus, we have verified that  \(\A_{/N}\) is Liouville
admissible.
\end{proof}

We proceed to state a dichotomy result that is essential to understand the geometry of Liouville admissible sets.
\begin{proposition}
\label{prop:boundary-dichotomy-reduction}
Let \(\A\subset\mathcal Q(V)\) be admissible and set
\(\Sigma=\partial\A\). Let \(N\subset V\) be a proper nontrivial
\(\mathbb F\)-linear subspace. Then
\[
\partial(\A_{/N})
\subset
\Sigma_{/N}
\subset
\A_{/N}.
\]
Moreover, 
\begin{equation}%\label{goodcase}
\text{either }\ \partial(\A_{/N})
=
\Sigma_{/N},\quad \text{or}\ \ 
\Sigma_{/N}
=
\A_{/N}.\end{equation}
In the second case, there exists a real linear hyperplane
\(P\subset\mathcal Q(V)\) supporting \(\A\) such that
\[
q_N^*\mathcal Q(V/N)\subset P.
\]
\end{proposition}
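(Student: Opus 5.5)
Set $L:=q_N^*\mathcal Q(V/N)\subset\mathcal Q(V)$, a linear subspace. The pullback $q_N^*$ is a linear isomorphism onto $L$, and it identifies $\A_{/N}$ with the closed convex set $\A\cap L$ and $\Sigma_{/N}$ with $\Sigma\cap L=\partial\A\cap L$. So the whole statement is about how the relative boundary of $\A\cap L$ inside $L$ compares with $\partial\A\cap L$. The plan is pure convex geometry; no PDE input is needed.

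\emph{Step 1: the inclusions.} Since $\A$ is closed, $\Sigma\subset\A$, which gives $\Sigma_{/N}\subset\A_{/N}$. For $\partial(\A_{/N})\subset\Sigma_{/N}$, take $B\in\partial_{V/N}(\A_{/N})$. Because $\A_{/N}$ is closed, $B\in\A_{/N}$, so $q_N^*B\in\A$. There are $B_k\to B$ with $B_k\notin\A_{/N}$, so $q_N^*B_k\notin\A$ and $q_N^*B_k\to q_N^*B$. Hence $q_N^*B\in\partial\A=\Sigma$.

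\emph{Step 2: the dichotomy.} Suppose $\partial(\A_{/N})\neq\Sigma_{/N}$. Then some $B_0\in\Sigma_{/N}$ lies in the interior of $\A_{/N}$ relative to $\mathcal Q(V/N)$. Note that $\A_{/N}$ has nonempty interior, since it contains $\mathcal Q_{++}(V/N)$ by \ref{Cond:origin} and \ref{Cond:up}. Set $A_0=q_N^*B_0\in\partial\A$. Because $\A$ is closed and convex with nonempty interior (it contains $\mathcal Q_{++}(V)$), Hahn--Banach gives a nonzero linear functional $\ell$ on $\mathcal Q(V)$ with $\ell\ge\ell(A_0)$ on $\A$. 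Now restrict $\ell\circ q_N^*$ to $\mathcal Q(V/N)$. It is affine-minimized over $\A_{/N}$ at the interior point $B_0$, so it is constant on a neighbourhood of $B_0$. Being linear, it therefore vanishes identically, so $\ell|_L\equiv0$ and $\ell(A_0)=0$.

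Define $P=\ker\ell$. This is a real linear hyperplane containing $L$, and $\A\subset\{\ell\ge0\}$ with $0\in\A\cap P$, so $P$ supports $\A$. This proves the final assertion. It then follows that every $B\in\A_{/N}$ satisfies $q_N^*B\in\A\cap P$, and $\A\cap P\subset\partial\A$ because points of the supporting hyperplane cannot be interior to $\A$. Thus $\A_{/N}\subset\Sigma_{/N}$, and together with Step 1 this gives $\Sigma_{/N}=\A_{/N}$.

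\emph{Main obstacle.} The key step is the observation in Step 2 that a single interior point of $\A_{/N}$ lying on $\partial\A$ forces the whole slice $L$ into a supporting hyperplane through $0$. This relies on $L$ being a linear subspace, not merely affine, and on $0\in\A$. The rest is routine topology of closed convex sets.
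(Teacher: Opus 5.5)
Your proposal is correct and follows essentially the same route as the paper. You obtain $\partial(\A_{/N})\subset\Sigma_{/N}$ from closedness of $\A_{/N}$ and continuity of $q_N^*$, take a supporting functional $\ell$ at $q_N^*B_0$ for an interior point $B_0$ of $\A_{/N}$ lying in $\Sigma_{/N}$, and show that $\ell$ vanishes on $q_N^*\mathcal Q(V/N)$, which forces $\A_{/N}\subset\Sigma_{/N}$. Your sequence formulation of Step~1 and your explicit remark that $\ell(A_0)=0$ (which the paper leaves implicit) are only cosmetic differences.
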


\begin{proof}
Since \(\Sigma\subset\A\), one has
\(\Sigma_{/N}\subset\A_{/N}\).

Let \(B\in\partial(\A_{/N})\). Since \(\A_{/N}\) is
closed, \(q_N^*B\in\A\). If
\(q_N^*B\in\operatorname{int}\A\), continuity of \(q_N^*\) implies that
\(B\) lies in an open subset of \(\A_{/N}\), contradicting
\(B\in\partial(\A_{/N})\). Hence
\[
q_N^*B\in\A\setminus\operatorname{int}\A
=
\partial\A
=
\Sigma,
\]
so \(B\in\Sigma_{/N}\). Thus
\begin{equation}\label{new1}
\partial(\A_{/N})
\subset
\Sigma_{/N}.
\end{equation}

If two sets in \eqref{new1} are equal,
there is nothing further to prove. Otherwise,  choose
\[
B_0
\in
\Sigma_{/N}
\setminus
\partial(\A_{/N}).
\]
Since \(\Sigma_{/N}\subset\A_{/N}\) and \(\A_{/N}\) is closed,
\(B_0\in\operatorname{int}(\A_{/N})\).

Set \(X_0=q_N^*B_0\). Then \(X_0\in\Sigma=\partial\A\). Since \(\A\) is
closed and convex with nonempty interior, there exists a nonzero real
linear functional \(\ell\) on \(\mathcal Q(V)\) such that for all $X\in\A$,
\begin{equation}\label{add0}
\ell(X)\ge\ell(X_0).
\end{equation}

Let \(E\in\mathcal Q(V/N)\). Since \(B_0\) is an interior point of
\(\A_{/N}\), one has \(B_0+tE\in\A_{/N}\) for all sufficiently small
\(t\in\mathbb R\). Hence
\begin{equation}
X_0+tq_N^*E\in\A.
\end{equation}
Applying \eqref{add0} for both signs of small \(t\) gives
\[
\ell(q_N^*E)=0.
\]
Therefore
\[
q_N^*\mathcal Q(V/N)\subset P:=\ker\ell.
\]

Now let \(B\in\A_{/N}\). Then
\(q_N^*B\in\A\cap P\). Since \(P\) supports \(\A\),
\[
\A\cap P\subset\partial\A=\Sigma.
\]
Hence \(q_N^*B\in\Sigma\), so \(B\in\Sigma_{/N}\). Therefore
\(
\A_{/N}\subset\Sigma_{/N}.
\)
Since the reverse inclusion is obvious,
\(
\Sigma_{/N}=\A_{/N}\). 
\end{proof}
We proceed to establish the following proposition.

\begin{proposition}
\label{lem:liouville-admissible-stability}
Liouville admissibility is preserved under finite intersections and
under the natural \(GL_{\mathbb F}(V)\)-action on \(\mathcal Q(V)\).
\end{proposition}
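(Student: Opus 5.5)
The plan is to verify the two stability claims directly from Definition~\ref{defn:Liouville-admissibility}, treating each nonzero proper subspace \(N\subset V\) separately. Admissibility itself is already stable under both operations by Lemma~\ref{lem:admissible-stability}, so only the quotient dichotomy needs checking. For intersections, the key tool will be the boundary dichotomy of Proposition~\ref{prop:boundary-dichotomy-reduction}.

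\emph{The \(GL_{\mathbb F}(V)\)-action.} Fix \(\varphi\in GL_{\mathbb F}(V)\) and a nonzero proper subspace \(N\).
\begin{enumerate}
\item Since \(\varphi(N)\) is again nonzero and proper, \(\varphi\) induces an isomorphism \(\bar\varphi:V/N\to V/\varphi(N)\) with \(q_{\varphi(N)}\circ\varphi=\bar\varphi\circ q_N\).
\item From this identity, \(q_N^*\bar\varphi^*B=\varphi^*q_{\varphi(N)}^*B\). Hence \((\varphi^*\A)_{/N}=\bar\varphi^*(\A_{/\varphi(N)})\), and \((\varphi^*\Sigma)_{/N}=\bar\varphi^*(\Sigma_{/\varphi(N)})\).
\item The map \(\bar\varphi^*\) is a linear homeomorphism, so it commutes with taking boundaries. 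Also \(\partial(\varphi^*\A)=\varphi^*\Sigma\).
\item Therefore boundary compatibility for \(\A\) at \(\varphi(N)\) transfers to \(\varphi^*\A\) at \(N\).
\item In the terminal alternative, \(\A_{/\varphi(N)}\subset\TT\) gives \((\varphi^*\A)_{/N}\subset\bar\varphi^*\TT\). This set is terminal because precomposing a subsolution with a linear isomorphism preserves subsolutions. This is the version of Lemma~\ref{lem:terminal-stability}\ref{item:terminal-GL} between two different spaces, proved the same way or by fixing an identification \(V/N\simeq V/\varphi(N)\).
\end{enumerate}

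\emph{Finite intersections.} Let \(\A=\bigcap_{j=1}^m\A_j\) with each \(\A_j\) Liouville admissible, and write \(\Sigma_j=\partial\A_j\).
\begin{enumerate}
\item Note that \(\A_{/N}=\bigcap_j(\A_j)_{/N}\).
\item If some \((\A_j)_{/N}\) lies in a terminal set \(\TT\), then so does \(\A_{/N}\), and we are done.
\item Otherwise, every \(j\) satisfies \((\Sigma_j)_{/N}=\partial((\A_j)_{/N})\). We must show \(\Sigma_{/N}=\partial(\A_{/N})\).
\item Argue by contradiction. By Proposition~\ref{prop:boundary-dichotomy-reduction}, failure means \(\Sigma_{/N}=\A_{/N}\).
\item Pick \(B_0\in\operatorname{int}(\A_{/N})\). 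The interior is nonempty because \(\A_{/N}\) is admissible, hence contains \(\mathcal Q_+(V/N)\) by (A1)--(A2), which has nonempty interior.
\item Then \(q_N^*B_0\in\Sigma=\A\setminus\operatorname{int}\A\).
\item For a finite intersection, \(\operatorname{int}\A=\bigcap_j\operatorname{int}\A_j\). So there is an index \(j\) with \(q_N^*B_0\in\A_j\setminus\operatorname{int}\A_j=\Sigma_j\), i.e.\ \(B_0\in(\Sigma_j)_{/N}=\partial((\A_j)_{/N})\).
\item On the other hand, \(B_0\in\operatorname{int}(\A_{/N})\subset\operatorname{int}((\A_j)_{/N})\). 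This is disjoint from the boundary of the closed set \((\A_j)_{/N}\), which is a contradiction.
\end{enumerate}

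The main obstacle is precisely this intersection case. Boundary compatibility of each factor does not obviously imply boundary compatibility of the intersection. The resolution is to use the dichotomy to reduce to the degenerate alternative \(\Sigma_{/N}=\A_{/N}\), and then to localize a single interior point of \(\A_{/N}\) on the boundary of one factor \(\A_j\). The small point to check carefully is the interior-of-intersection identity, which holds for any finite family of sets, together with \(\partial\A=\A\setminus\operatorname{int}\A\) for closed \(\A\).
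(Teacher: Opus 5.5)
Your proof is correct and follows the paper's argument. The $GL_{\mathbb F}(V)$ part is the same transport of both alternatives via $\bar\varphi:V/N\to V/\varphi(N)$; for intersections, the paper likewise combines $\Sigma\subset\bigcup_j\Sigma_j$ (interior of a finite intersection) with the boundary dichotomy. The only difference is cosmetic: the paper concludes that $\Sigma_{/N}$ has empty interior because it sits inside the finite union of the closed, empty-interior sets $(\Sigma_j)_{/N}$, whereas you test a single point of $\operatorname{int}(\A_{/N})$. Your version avoids that union step and makes explicit the nonempty interior of $\A_{/N}$, which the paper uses only implicitly.
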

\begin{proof}
Let \(\A_1,\dots,\A_m\subset\mathcal Q(V)\) be Liouville admissible and
set
\[
\A=\bigcap_{j=1}^m\A_j.
\]
By Lemma~\ref{lem:admissible-stability},
\(\A\) is admissible. Set $\Sigma=\partial\A$.

For every nonzero proper subspace \(N\subset V\), $\A_{/N}\subset(\A_j)_{/N}$.
If there exists $j$ such that $(\A_j)_{/N}$ is contained in a terminal set, then so is $\A_{/N}$. Otherwise, by Proposition~\ref{prop:boundary-dichotomy-reduction}, $\partial_{\mathcal{Q}(V/N)} ((\A_j)_{/N}) = (\Sigma_j)_{/N}$ for all $j$. In particular, $(\Sigma_j)_{/N}$ has empty interior. Since  
\[
\Sigma_{/N} \subset \bigcup_j (\Sigma_j)_{/N},
\]
$\Sigma_{/N}$  also has empty interior in $\mathcal{Q}(V/N)$.
Applying \autoref{prop:boundary-dichotomy-reduction} to $\A$, we have
\[\Sigma_{/N} = \partial (\A_{/N}).\] 
Therefore, \(\A\) is
Liouville admissible.

Now let \(\varphi\in GL_{\mathbb F}(V)\).  \(\varphi^*\A\) is admissible by Lemma~\ref{lem:admissible-stability}.
 For any nonzero subspace
\(N\subset V\), there exists a natural linear isomorphism
\(\bar\varphi:V/N\to V/\varphi(N)\) such that
\[
(\varphi^*\A)_{/N}=\bar\varphi^*(\A_{/\varphi(N)}),\qquad
(\varphi^*\Sigma)_{/N}=\bar\varphi^*(\Sigma_{/\varphi(N)}).
\]
Since \(\A\) is Liouville admissible, either \(\A_{/\varphi(N)}\) is contained in
a terminal set \(L\), or \(\Sigma_{/\varphi(N)}=\partial(\A_{/\varphi(N)})\).  In the
first case, \((\varphi^*\A)_{/N}\subset\bar\varphi^*(L)\) with
\(\bar\varphi^*(L)\) a terminal set.  In the second case, because
\(\bar\varphi^*\) is a linear homeomorphism,
\(\partial((\varphi^*\A)_{/N})=(\varphi^*\Sigma)_{/N}\).  Thus
\(\varphi^*\A\) satisfies the Liouville admissible dichotomy, and is
Liouville admissible.
\end{proof}

\begin{remark}
\label{rem:anisotropic-examples}
The preceding stability results provide many anisotropic examples of Liouville admissible sets.
\end{remark}
\subsection{Spectral Sets}
Although the definition of an admissible set does not require a metric
on \(V\), many applications come equipped with a distinguished Euclidean,
respectively, Hermitian, metric \(g\in\mathcal Q_{++}(V)\). Note that $g$ may or may not be the same as the metric $h$ in \ref{Cond:subharmonic} in Definition~\ref{def:admissible}. Once such a
metric is fixed, every \(A\in\mathcal Q(V)\) may be identified with the
self-adjoint endomorphism \(g^{-1}A\), and hence has a well-defined
unordered eigenvalue vector
\[
\lambda_g(A)=(\lambda_1,\ldots,\lambda_n).
\] Also, for any proper linear subspace $W$, we define 
$$W^{\perp_g}:=\{v\in V, g(v,w)=0, \ \forall w\in W\}, $$ which is linear isomorphic to $V/W$.
This leads to the following notion.

\begin{definition} 
\label{def:spectral-set}

Let \((V,g)\) be a finite-dimensional real or complex 
vector space with a metric \(g\in\mathcal Q_{++}(V)\). A set
\(\A\subset\mathcal Q(V)\) is called \emph{spectral} if
\[
U^*\A=\A
\]
for every $g$-invariant linear map \(U\), i.e. for every 
\(U\in O(V,g)\) in the real case and every
\(U\in U(V,g)\) in the complex case.

Equivalently, there exists a permutation-invariant set
\(
\Lambda\subset\mathbb R^n
\)
such that
\begin{equation}\label{Lambda}
\A = \{A\in\mathcal Q(V):\lambda_g(A)\in\Lambda\}.
\end{equation}

\end{definition}
 \begin{theorem}
\label{lem:spectral-automatic}

Let \((V,g)\) be given as in \autoref{def:spectral-set}, and
let \(\A\subset\mathcal Q(V)\) be admissible and spectral.  Then
\(\A\) is Liouville admissible.

\end{theorem}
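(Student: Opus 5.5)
The plan is to use the boundary dichotomy of Proposition~\ref{prop:boundary-dichotomy-reduction} together with the orthogonal (resp.\ unitary) invariance of \(\A\) to show the following: whenever the first alternative in Definition~\ref{defn:Liouville-admissibility} fails, the quotient \(\A_{/N}\) is contained in the positive cone \(\mathcal Q_+(V/N)\). That cone is terminal by Proposition~\ref{prop:positive-cone-terminal}, and \(\A_{/N}\) is admissible by Lemma~\ref{lem:admissibilityofquotient}, so this suffices.

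\emph{Step 1: extract a positive functional.} Fix a nonzero proper subspace \(N\) with \(\Sigma_{/N}\neq\partial(\A_{/N})\). By Proposition~\ref{prop:boundary-dichotomy-reduction} and its proof, there are a nonzero real linear functional \(\ell\) and a point \(X_0=q_N^*B_0\in\Sigma\) such that \(\ell(X)\ge\ell(X_0)\) for all \(X\in\A\), and \(\ell\) vanishes on \(q_N^*\mathcal Q(V/N)\). Since \(X_0\) itself lies in \(q_N^*\mathcal Q(V/N)\), we get \(\ell(X_0)=0\), so \(\ell\ge0\) on \(\A\).

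\emph{Step 2: identify the functional as a form supported on \(N\).} Using \(g\), write \(\ell(X)=\Tr_g(LX)\) for a \(g\)-self-adjoint \(L\). Since \(0\in\A\) and \(\A+\mathcal Q_+(V)\subset\A\), we have \(\mathcal Q_+(V)\subset\A\), and hence \(L\ge0\). Identify \(q_N^*\mathcal Q(V/N)\) with the forms supported on the block \(N^{\perp_g}\). The vanishing of \(\ell\) on these forms then says that the \(N^{\perp_g}\)-block of \(L\) is zero. Positivity of \(L\) then kills the off-diagonal blocks as well. So \(L=\sum_{i=1}^r\mu_i\, w_iw_i^*\) with \(\mu_i>0\), \(r\ge1\), and \(w_i\) an orthonormal family spanning a subspace of \(N\); in particular \(r\le\dim N\).

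\emph{Step 3: rotate using spectrality.} Since \(U^*\A=\A\) for all \(U\in O(V,g)\) (resp.\ \(U(V,g)\)), the rotated functional \(X\mapsto\Tr_g(ULU^{-1}X)\) is also nonnegative on \(\A\). Equivalently, \(\sum_i\mu_i X(w_i',w_i')\ge0\) for every orthonormal family \(w_1',\dots,w_r'\).

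Now let \(B\in\A_{/N}\) and take any unit vector \(e\in N^{\perp_g}\). Choose \(w_1'=e\) and \(w_2',\dots,w_r'\in N\) orthonormal. This is possible because \(r-1<\dim N\), and any such family is orthonormal with \(e\). Since \(q_N^*B\) vanishes on \(N\), we obtain \(\mu_1\,(q_N^*B)(e,e)\ge0\). Hence \(B\) is positive semidefinite on \(V/N\cong N^{\perp_g}\), which gives \(\A_{/N}\subset\mathcal Q_+(V/N)\) and establishes the second alternative.

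The main obstacle I expect is Step 2: cleanly translating "\(\ell\) vanishes on pullbacks and is nonnegative on \(\A\)" into the block structure of \(L\). The key points there are \(\ell(X_0)=0\) and the use of \(\mathcal Q_+\subset\A\) to force \(L\ge0\). The rotation step is then elementary, and it works identically in the real and complex cases.
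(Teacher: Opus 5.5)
Your proof is correct and follows the paper's argument. You take the supporting functional from Proposition~\ref{prop:boundary-dichotomy-reduction}, represent it by a positive semidefinite form supported on $N$, and use the orthogonal/unitary invariance to move a positive part of it onto a direction of $N^{\perp_g}$, which gives $\A_{/N}\subset\mathcal Q_+(V/N)$. The only differences are cosmetic: the paper works on the eigenvalue set $\Lambda$ and transposes coordinates $i\leftrightarrow j$ with $S_{jj}>0$, whereas you rotate the functional directly, and you state explicitly that $\ell(X_0)=0$, which the paper uses tacitly.
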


\begin{proof}

Set \(\Sigma=\partial\A\), and let \(N\subset V\) be a proper nontrivial
\(\mathbb F\)-linear subspace. Write
\[
V=E\oplus N,\]
where
$E=N^{\perp_g}\simeq V/N$. By Proposition~\ref{prop:boundary-dichotomy-reduction}, either
\(\partial(\A_{/N})=\Sigma_{/N}\) holds
or else \(\Sigma_{/N}=\A_{/N}\) and there exists a supporting
hyperplane \(P=\ker\ell\subset\mathcal Q(V)\) satisfying
\(q_N^*\mathcal Q(V/N)\subset P\). The first case fits the definition of Liouville admissibility.

We assume the second.
Using the trace pairing, write \(\ell(X)=\Tr_g(SX)\) for some nonzero
\(S\in\mathcal Q(V)\). Since \(0\in\A\) and
\(\A+\mathcal Q_+(V)\subset\A\), we have
\(\mathcal Q_+(V)\subset\A\). Hence
\(\Tr_g(SX)\ge0\) for all \(X\in\mathcal Q_+(V)\), which implies
\(S\in\mathcal Q_+(V)\).

Since \(q_N^*\mathcal Q(V/N)\subset\ker\ell\), the \(E\)-block of
\(S\) vanishes. Because \(S\ge0\), all mixed \(E\)-\(N\) blocks also
vanish, and therefore \(S\) is supported entirely on the \(N\)-factor.
Choose \(j>m\), where \(m=\dim_{\mathbb F}(V/N)\), such that
\(S_{jj}>0\).

Let $\Lambda\subset \R^n$ satisfy \eqref{Lambda}. Thus, $\Lambda$ is permutation invariant and satisfies the elliptic condition
\[
\Lambda+\mathbb R_{\ge0}^n\subset\Lambda.
\]

The quotient reduction corresponds to
\[
\Lambda_{/N}
=
\{\xi\in\mathbb R^m:(\xi,0)\in\Lambda\}.
\]

Let \(\xi\in\Lambda_{/N}\). Then \((\xi,0)\in\Lambda\). Permuting the
\(i\)-th coordinate with the \(j\)-th coordinate and applying
\(\ell\ge0\) yields \(S_{jj}\xi_i\ge0\). Since \(S_{jj}>0\), we obtain
\(\xi_i\ge0\) for every \(i\). Thus
\(
\Lambda_{/N}\subset\mathbb R_{\ge0}^m,
\)
and consequently
\[
\A_{/N}\subset\mathcal Q_+(V/N).
\]

By Proposition~\ref{prop:positive-cone-terminal},
\(\mathcal Q_+(V/N)\) is terminal. Therefore, for every proper
nontrivial subspace \(N\subset V\), either \(\Sigma_{/N}=\partial(\A_{/N})\)
or \(\A_{/N}\) is contained in a terminal set. Hence \(\A\) is
Liouville admissible.
\end{proof}

Theorem~\ref{lem:spectral-automatic} shows that the Liouville admissibility of a spectral admissible set is automatic once admissibility has been verified.

\section{From Univariate G\aa rding To Liouville Admissible Sets}
\label{sec:applications-univariate-garding}
In this section, we construct a large class of spectral Hessian equations from univariate G{\aa}rding polynomials.

\subsection{Univariate G{\aa}rding Polynomials And Polarization}

\begin{definition}
Let \(f\in\mathbb R[t]\) be a polynomial of degree \(d\). If \(f\)
has at least one real root, we denote by \(r(f)\) the largest real
root of \(f\). We say that \(f\) satisfies the \emph{monotone root sequence}
condition, or MRS condition, if every derivative \(f^{(i)}\),
\(0\le i\le d-1\), has a real root and
\[
r(f)\ge r(f')\ge\cdots\ge r(f^{(d-1)}).
\]
A univariate polynomial satisfying this condition is called a \emph{univariate
G{\aa}rding polynomial}.
\end{definition}

The above definition was first introduced by Lin in \autocite{MR4604837}, where a univariate polynomial $f(t)$ that satisfies MRS is called \emph{right-Noetherian}. 

\begin{remark}
Every real stable univariate polynomial is univariate G{\aa}rding. In
fact, real stability in one variable is equivalent to having only real
roots. Rolle's theorem then implies that all derivatives are
real-rooted and that their largest roots satisfy the MRS condition.
\end{remark}

For \(f(t)=\sum_{k=0}^d a_k t^k\) and 
\(n\ge d\), we define the polarization of $f$ as
\begin{equation}
    \label{pol}
\Pol_n(f)(\mathbf x)=\Pol(f)(\x)
=
\sum_{k=0}^d
a_k\frac{\sigma_k(\mathbf x)}{\binom nk}.
\end{equation}

\begin{theorem}[Univariate G{\aa}rding polarization]
\label{thm:univariate-garding-polarization}
Let \(f(t)\) be a univariate G{\aa}rding polynomial of degree \(d\), and 
\(n\ge d\), and set $g(\x)=
\Pol_n(f)(\mathbf x)$. Then
\begin{enumerate}
\item \(g\) is a G{\aa}rding polynomial on \(\mathbb R^n\);

\item $\{g>0\}$ has a unique connected component, \(\mathcal C_g\) passing the positive ray test (PRT), i.e.,
\[
\mathcal C_g+\mathbb R_+^n\subset\mathcal C_g;
\]

\item  \(\mathcal C_g\) is convex;

\item for every multi-index \(\alpha\in\mathbb N^n\) with
\(\partial^\alpha g\not\equiv0\), the derivative
\(\partial^\alpha g\) is G{\aa}rding and possesses a G{\aa}rding
component \(\mathcal C_{\partial^\alpha g}\) satisfying
\[
\mathcal C_g\subset\mathcal C_{\partial^\alpha g}.
\]
\end{enumerate}
\end{theorem}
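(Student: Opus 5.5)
The plan is to reduce everything to the theory of Gårding polynomials from \cite{fang2026gaardingpolynomials,fang2026idealgaardingpolynomials}, using the explicit structure of the polarization. The key observation is that \(g=\Pol_n(f)\) is a symmetric multi-affine polynomial. Its restriction to the diagonal is \(g(t,\dots,t)=f(t)\). Its partial derivatives are again polarizations: for a multi-affine symmetric polynomial one has \(\partial_{x_i} g\), viewed as a function of the remaining \(n-1\) variables, equal to \(\frac{1}{n}\Pol_{n-1}(f')\). This follows from \(\partial_i\sigma_k=\sigma_{k-1}(\hat x_i)\) and \(k\binom{n}{k}^{-1}=n\binom{n-1}{k-1}^{-1}\). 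Iterating, every nonzero \(\partial^\alpha g\) with \(\alpha\) a \(0/1\) multi-index of length \(j\) is, up to a positive constant, \(\Pol_{n-j}(f^{(j)})\) in the complementary variables. Moreover \(f^{(j)}\) is again MRS. So statements (1)--(4) should be proved by a single induction on \(d\), the degree.

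First, for (1) and (2), I would identify the candidate component \(\mathcal C_g\) as the connected component of \(\{g>0\}\) containing the ray \(\{(t,\dots,t):t>r(f)\}\). This ray lies in \(\{g>0\}\) since \(f>0\) beyond its largest root and \(a_d>0\). The Gårding property in the sense of \cite{fang2026gaardingpolynomials} requires hyperbolicity-type behavior along the positive directions, namely that \(s\mapsto g(\x+s\mathbf 1)\) has its roots controlled. The MRS chain \(r(f)\ge r(f')\ge\cdots\) is exactly the univariate shadow of this. The characterization theorem of \cite{fang2026gaardingpolynomials}, identifying univariate Gårding polynomials with MRS polynomials and asserting that their polarizations are Gårding, would be invoked directly rather than reproved. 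The positive ray test follows from the inclusion \(\mathcal C_g\subset\mathcal C_{\partial_i g}\) in (4) applied inductively. On \(\mathcal C_g\) all first derivatives are positive, since \(\partial_i g>0\) on \(\mathcal C_{\partial_ig}\), which contains \(\mathcal C_g\). Hence \(g\) increases along each \(e_i\), so \(\x+s e_i\) stays in \(\{g>0\}\) and, by connectedness of the segment, in \(\mathcal C_g\). Uniqueness holds because any PRT component contains points \(\x+s\mathbf 1\) for large \(s\), and for \(s\) large these lie on the same side as the diagonal tail. That tail is connected to the diagonal ray through the region where the leading term \(a_d\sigma_d/\binom nd\) dominates, which sits inside the positive cone.

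For (4), the plan is induction on \(|\alpha|\). Derivatives with a repeated index vanish, since \(g\) is multi-affine, so it suffices to treat \(\partial_i\) and iterate. The inclusion \(\mathcal C_g\subset\mathcal C_{\partial_ig}\) would follow from a Gårding-type interlacing argument. Along each line \(\x+s\mathbf 1\) with \(\x\in\mathcal C_g\), the largest root of \(s\mapsto g(\x+s\mathbf 1)\) dominates that of its derivative in the direction \(\mathbf 1\), which is \(\sum_i\partial_ig\). This gives containment in the component of the directional derivative; to pass to the individual \(\partial_i g\) I would use the ideal-Gårding machinery of \cite{fang2026idealgaardingpolynomials}. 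Convexity (3) would then come from the standard Gårding argument: \(g^{1/d}\), or a suitable concave representative, is concave on \(\mathcal C_g\), as established in the cited theory.

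The main obstacle is the genuinely multivariate step in (1) and (4). MRS is a univariate condition, and transferring it to hyperbolicity-like control off the diagonal is nontrivial, since \(g\) need not be real stable. I would lean on the cited characterization from \cite{fang2026gaardingpolynomials} at that point rather than attempt an independent proof.
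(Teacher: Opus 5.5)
This is essentially the paper's approach: the paper proves the theorem purely by citing \cite{fang2026gaardingpolynomials,fang2026idealgaardingpolynomials} (and \cite{MR4604837} for convexity when $d=n$), and, like you, it does not reprove the multivariate G{\aa}rding property, the component inclusions in (4), or the convexity in (3). Your added reductions are sound (e.g.\ $\partial_i g=\frac1n\Pol_{n-1}(f')$ in the remaining variables, and PRT from (4) via multi-affinity), but the binomial identity you quote is inverted: the identity that actually produces the factor $\frac1n$ is $n\binom{n}{k}^{-1}=k\binom{n-1}{k-1}^{-1}$.
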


\begin{proof}
See \autocite{fang2026gaardingpolynomials, fang2026idealgaardingpolynomials}.
See also \autocite{MR4604837} for the convexity argument of the $d=n$ case.
\end{proof}

We now give the detailed form of \autoref{thm:intro-example}, which was
stated in the introduction.

\begin{theorem}
\label{thm:polarized-garding-liouville}
Let \(\mathbb F\in\{\mathbb R,\mathbb C\}\), and let \((V,h)\) be \(\mathbb F^n\) with its standard Euclidean (respectively, Hermitian) metric. Let \(f\in\mathbb R[t]\) be a univariate G{\aa}rding
polynomial of degree \(d\le n\), normalized by \(r(f)=0\). Set
\(g=\Pol_n(f)\) as in (\ref{pol}), let \(\mathcal C_g\) be the G{\aa}rding
component of $g$ from
Theorem~\ref{thm:univariate-garding-polarization}, and define the associated spectral set
\[
\A_g
=
\{A\in\mathcal Q(V):
\lambda_h(A)\in\overline{\mathcal C_g}\},
\qquad
\Sigma_g=\partial\A_g.
\]
Then \(\A_g\) is 
Liouville admissible.
\end{theorem}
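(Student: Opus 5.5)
The plan is to reduce everything to Theorem~\ref{lem:spectral-automatic}. By construction $\A_g$ is spectral: it is defined through $\lambda_h(A)$, and $\overline{\mathcal C_g}$ is permutation invariant. So it suffices to check that $\A_g$ is admissible, i.e.\ that it satisfies \ref{Cond:origin}--\ref{Cond:subharmonic}.

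First I check that $\Lambda:=\overline{\mathcal C_g}$ is permutation invariant. The polynomial $g=\Pol_n(f)$ is symmetric, so any permutation maps connected components of $\{g>0\}$ to connected components. It also preserves the positive ray test, since $\mathbb R^n_+$ is permutation invariant. By the uniqueness in Theorem~\ref{thm:univariate-garding-polarization}(2), $\mathcal C_g$ is therefore permutation invariant, and so is its closure.

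Next I translate the eigenvalue conditions into matrix conditions.
\begin{itemize}
\item[(A2)] By Theorem~\ref{thm:univariate-garding-polarization}(2), $\Lambda+\mathbb R^n_{\ge0}\subset\Lambda$ (pass to closures). Weyl monotonicity of ordered eigenvalues, $\lambda_i(A+P)\ge\lambda_i(A)$ for $P\ge0$, then gives $\A_g+\mathcal Q_+\subset\A_g$. To use this, note that $\lambda(A)+(\lambda(A+P)-\lambda(A))$ with a nonnegative increment lies in $\Lambda$, after matching the ordering by permutation invariance.
\item[(A3)] By Theorem~\ref{thm:univariate-garding-polarization}(3), $\mathcal C_g$ is convex, hence so is $\Lambda$. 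A permutation-invariant, convex, closed set in $\mathbb R^n$ lifts to a convex spectral set. I would cite the standard Davis/Lewis-type argument: $\lambda(A+B)$ lies in the convex hull of the permutations of $\lambda(A)+\lambda(B)$, by Ky Fan majorization. Alternatively, in the G\aa rding setting one can use hyperbolicity-type arguments.
\item Closedness of $\A_g$ follows from continuity of $\lambda_h$.
\end{itemize}

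The origin and the trace condition use the normalization $r(f)=0$.
\begin{itemize}
\item[(A1)] For $t>0$ we have $f(t)>0$, since $0$ is the largest real root and $a_d>0$. Hence $t\mathbf 1\in\{g>0\}$ for all $t>0$. This diagonal ray lies in a single component, and that component satisfies the PRT: adding a positive vector to the ray stays on the ray. The step needing care is to see that this component is $\mathcal C_g$. Because $\mathcal C_g+\mathbb R^n_+\subset\mathcal C_g$, the component $\mathcal C_g$ contains points $x+s\mathbf 1$ for large $s$. For large $s$ these lie in the same component as $s\mathbf 1$, because the dominant term $\sigma_d$ makes $g>0$ on a neighborhood cone around the diagonal at infinity. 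Hence the diagonal ray lies in $\mathcal C_g$, and $0=\lim_{t\to0}t\mathbf 1\in\Lambda$.
\item[(A4)] By Theorem~\ref{thm:univariate-garding-polarization}(4), applied with $\partial^\alpha g$ a $(d-1)$-fold derivative, we get $\mathcal C_g\subset\mathcal C_{\partial^\alpha g}$. Summing over multi-indices, or using symmetry, the relevant component for the linear polynomial $\propto\sigma_1+c$ is a half-space $\{\sigma_1>-c'\}$. To obtain $\sigma_1\ge0$ on $\Lambda$ exactly, I would use convexity together with the root normalization. Take $x\in\Lambda$ and symmetrize by averaging over permutations; this stays in $\Lambda$ by convexity and invariance and gives $\bar x\mathbf 1$ with $\bar x=\sigma_1(x)/n$. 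On the diagonal, $g(t\mathbf 1)=f(t)$, and the diagonal part of $\Lambda$ is $\{t\ge0\}$, since $0=r(f)$ and, by MRS, $\Lambda$ meets the diagonal exactly in $[0,\infty)$. Hence $\sigma_1(x)\ge0$, which gives $\Tr_h A\ge0$.
\end{itemize}

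With admissibility and spectrality established, Theorem~\ref{lem:spectral-automatic} yields Liouville admissibility. I expect the main obstacle to be identifying $\Lambda\cap\{t\mathbf 1\}=[0,\infty)\mathbf 1$. This requires showing that points $t\mathbf 1$ with $t<0$ are not in $\overline{\mathcal C_g}$ even when $f(t)>0$, i.e.\ that $\mathcal C_g$ contains no negative diagonal points. I would argue this from the PRT: if $t_0\mathbf 1\in\mathcal C_g$ with $t_0<0$, then the whole segment $[t_0,\infty)\mathbf 1$ lies in $\{g>0\}$, so $f>0$ on $[t_0,\infty)$, contradicting $f(0)=0$.
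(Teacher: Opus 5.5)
Your proposal is correct and follows the paper's proof: you verify (A1)--(A4) for the spectral lift using Theorem~\ref{thm:univariate-garding-polarization}, Weyl monotonicity and Davis convexity, and then apply Theorem~\ref{lem:spectral-automatic}. The only divergence is in the trace condition, and it is equally valid: the paper takes a supporting hyperplane to $\overline{\mathcal C_g}$ at $0$, uses PRT to show its normal is nonnegative, and averages that normal over permutations to get $\mathbf 1$, whereas you average points onto the diagonal and use $\overline{\mathcal C_g}\cap\mathbb R\mathbf 1=[0,\infty)\mathbf 1$ (to pass from $\mathcal C_g$ to its closure there, note that $t_0\mathbf 1\in\overline{\mathcal C_g}$ gives $(t_0+\epsilon)\mathbf 1\in\mathcal C_g$ for every $\epsilon>0$); you also supply arguments for permutation invariance and for $0\in\overline{\mathcal C_g}$, which the paper only asserts.
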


\begin{proof}
By Theorem~\ref{thm:univariate-garding-polarization}, the polynomial
\(g\) is G{\aa}rding, the component \(\mathcal C_g\) is convex and satisfies PRT.
Since \(r(f)=0\), one has \(0\in\overline{\mathcal C_g}\), and hence
\(\overline{\mathcal C_g}+\mathbb R_{\ge0}^n
\subset\overline{\mathcal C_g}\).

Because \(g\) is symmetric, \(\overline{\mathcal C_g}\) is permutation
invariant. Therefore, its spectral lift
\[
\A_g
=
\{A\in\mathcal Q(V):
\lambda_h(A)\in\overline{\mathcal C_g}\}
\]
is closed, convex by the Davis convexity theorem~\cite{MR90572}, and contains \(0\). If \(A\in\A_g\) and \(P\in\mathcal Q_+(V)\), then eigenvalue monotonicity gives
\(\lambda_h(A+P)\in
\overline{\mathcal C_g}+\mathbb R_{\ge0}^n\subset\overline{\mathcal C_g}\), hence \(A+P\in\A_g\).
Thus \(\A_g+\mathcal Q_+(V)\subset\A_g\).

We next verify the subharmonic half-space condition. We claim
\begin{equation}\label{spec-trace}
\overline{\mathcal C_g}
\subset
\left\{
x\in\mathbb R^n:
\sum_{i=1}^n x_i\ge0
\right\}.
\end{equation}

Since \(\overline{\mathcal C_g}\) is closed, convex, proper, and
\(0\in\partial\overline{\mathcal C_g}\), consider a supporting hyperplane at $0$. There exists a nonzero vector
\(\mathbf a=(a_1,\dots,a_n)\) such that \(\mathbf a\cdot \x\ge0\) on
\(\overline{\mathcal C_g}\). By PRT, $a_i\ge0$. Hence
\(\sum_i a_i>0\).

Since \(\overline{\mathcal C_g}\) is permutation invariant, every
\(\sigma a\), \(\sigma\in S_n\), is also a supporting normal.
After summing and averaging, $\mathbf 1$ is  a supporting normal, proving \eqref{spec-trace}, which indicates that
 \(\A_g\) satisfies the subharmonic half-space condition in
Definition~\ref{def:admissible}. 

We have shown that $\A_g$ is admissible. Finally, Theorem~\ref{lem:spectral-automatic} implies
that \(\A_g\) is Liouville admissible.
\end{proof}

\begin{remark}
\label{rem:explicit-Cg-Ag}
Let $f$ and $g$ be given as above.
For \(S\subset\{1,\dots,n\}\), write
\[
\partial_S:=\prod_{i\in S}\partial_i,
\qquad
\lambda|S:=(\lambda_i)_{i\notin S},
\qquad
|S|=s.
\]
Since \(g\) is multi-affine,
\[
\partial_S g(\lambda)
=
\sum_{k=s}^d
a_k
\frac{\sigma_{k-s}(\lambda|S)}{\binom nk}.
\]
The distinguished G{\aa}rding component \(\mathcal C_g\) is the following
\[
\mathcal C_g
=
\left\{
\lambda\in\mathbb R^n:
\sum_{k=s}^d
a_k
\frac{\sigma_{k-s}(\lambda|S)}{\binom nk}
>0
\text{ for all }
S
\text{ with }
\partial_S g\not\equiv0
\right\}.
\]
The associated spectral admissible set is
\[
\A_g
=
\{A\in\mathcal Q(\mathbb F^n):
\lambda(A)\in\overline{\mathcal C_g}\}.
\]
Thus, the boundary equation \(\Hess_{\mathbb F}u\in\Sigma_g=\partial \A_g\) is the
polynomial equation \(g(\lambda(\Hess_{\mathbb F}u))=0\), understood on
the admissible set \(\A_g\). 
\end{remark}

\begin{example}
\label{ex:k-hessian-branch}
Let
\[
f(t)=t^k,
\qquad
1\le k\le n.
\]
Then \(f\) is a univariate G{\aa}rding polynomial with \(r(f)=0\), and
\[
g=\Pol_n(f)=\frac{\sigma_k(\x)}{\binom nk}.
\]
We identify the associated Liouville admissible set with the usual
closed \(k\)-Hessian cone.

The G{\aa}rding component of \(\sigma_k\) is
\[
\Gamma^+_{k}
=
\{\lambda\in\mathbb R^n:
\sigma_1(\lambda)>0,\dots,\sigma_k(\lambda)>0\},
\]
namely, the component of \(\{\sigma_k>0\}\) that contains the positive
orthant.

Consequently,
\[
\A_k
:=
\{A\in\mathcal Q(V):
\lambda(A)\in\overline{\Gamma^+_{k}}\}
=
\{A\in\mathcal Q(V):
\sigma_1(A)\ge0,\dots,\sigma_k(A)\ge0\}.
\]
Thus \(\A_k\) is exactly the usual closed \(k\)-Hessian cone,
which we may write as
\[
\A_k=\overline{\Gamma^+_{k}}(\mathcal Q(V)).
\]

Its boundary equation is
\[
\Sigma_k:=\partial\A_k
=
\{A\in\A_k:\sigma_k(A)=0, \sigma_j(A)\geq 0, \ j<k\}.
\]

By \autoref{thm:polarized-garding-liouville}, \(\A_k\) is Liouville
admissible. Therefore, every bounded and  uniformly $\alpha$-H\"{o}lder continuous ($0 <\alpha \leq 1$) viscosity solution of
\[
\Hess_{\mathbb F}u\in\Sigma_k
\] is constant. For the complex case, with a stronger $C^1$ bound, this was first proved by \citeauthor{MR3636634} \autocite{MR3636634}. Also see \autocite{MR3807322}.
\end{example}

\begin{example}
Let
\[
f(t)=t^n+n t^{n-1}=t^{n-1}(t+n).
\]
Then $f$ is real stable and G\aa rding. Note that
\[
\Pol_n(f)=\sigma_n+\sigma_{n-1}.
\]
Hence, the associated spectral set $\A$ is Liouville admissible, which by direct computation equals 
$$\A=\{A\in\Q(V):\ \sigma_k(A)\geq0,\ \forall k\leq n-1;\ \sigma_{n-1}(A)+\sigma_n(A)\geq 0\ \}.$$ Then every bounded, uniformly $\alpha$-H\"{o}lder continuous viscosity solution of
\[
\sigma_n(\Hess_{\mathbb F}u)
+
\sigma_{n-1}(\Hess_{\mathbb F}u)
=
0,\quad \Hess_{\mathbb F} u\in\A
\]
is constant. 
The complex Liouville theorem for \(\sigma_n+\sigma_{n-1}=0\) was first
obtained by \citeauthor{fu2026criticallyzequationkahler} \autocite{fu2026criticallyzequationkahler}
as a key blow-up input in the study of the critical LYZ equation.
Our framework gives an alternative proof, recovering their result as
a special case of the general Liouville admissibility condition.

\end{example}

\begin{example}We give a non-stable example.
Let
\[
f(t)=t^3+3t^2+3t=(t+1)^3-1.
\]
Since
\(
r(f)=0,\ 
r(f')=r(f'')=-1,
\)
the polynomial \(f\) satisfies the monotone root sequence condition. Hence, $f$ is G\aa rding but not real stable.
For \(n\ge3\),
\[
\Pol_n(f)
=
\frac{\sigma_3}{\binom n3}
+
3\frac{\sigma_2}{\binom n2}
+
3\frac{\sigma_1}{n}.
\]
The corresponding  Hessian equation is 
\[
\sigma_3(\Hess_{\mathbb F}u)
+
(n-2)\sigma_2(\Hess_{\mathbb F}u)
+
\frac{(n-1)(n-2)}2
\sigma_1(\Hess_{\mathbb F}u)
=
0,
\]
and the associated admissible branch is Liouville admissible. For \(n=3\),
the equation may be written as
\[
\sigma_3(\Hess_{\mathbb F}u+I)=1.
\]
\end{example}

\begin{remark}
The ideal G{\aa}rding polynomial framework suggests additional
Liouville admissible branches beyond the polarized univariate examples,
in the spirit of the constructions in \cite{MR4781476}. We leave
these extensions for future work.
\end{remark}

\section{Technical Tools} 
In the next three sections, we prove \autoref{thm:main}. In this section,
we gather some technical tools to be used in the proof.  The analytic tools are stated in the real setting. They apply in the complex case after identifying \(\mathbb C^n\) with \(\mathbb R^{2n}\);
the complex Hessian is obtained from the real Hessian by a fixed linear
projection.

\subsection{Mollification}

Let \(\rho \colon \mathbb R^n \to \mathbb R\) be a fixed smooth, radially nonincreasing, nonnegative function with compact support in the unit ball
\(\mathbb B_1 \subset \mathbb R^n\),  satisfying
\[
  \inf_{B_{1/2}(0)}\rho>0,
\]
and normalized so that
\[
  \int_{\mathbb R^n} \rho(y)\,dy = 1 .
\]
For a continuous function \(w \colon \mathbb R^n \to \mathbb R\), we define its
mollification \([w]_\varepsilon\) at scale \(\varepsilon>0\) as
\begin{equation}
  \label{eq:moll-orig}
  [w]_\varepsilon(z)
  =
  \int_{\mathbb R^n} w(z-\varepsilon y)\rho(y)\,dy .
\end{equation}

\subsection{Cartan's lemma and its density consequence}

The large-scale behaviour of bounded subharmonic functions is controlled
by the following lemma of Cartan.

\begin{theorem}\autocites[][Theorem 3.1]{MR3636634}[Lemma 3.10]{MR460672}
  \label{thm:Cartan}
  Let \(w\) be a bounded subharmonic function on \(\mathbb R^n\),
  \(n\ge2\), and fix any \(q>n-2\).  If
  \(a=\sup_{\mathbb R^n}w\), then
  \[
    \lim_{\|x\|\to\infty,\,
    x\in\mathbb R^n\setminus A}
    w(x)=a,
  \]
  where the exceptional set \(A\) is contained in at most a countable
  union of balls \(B(x_k,r_k)\) such that
  \[
    \sum_{k=1}^\infty
    \left(\frac{r_k}{\|x_k\|}\right)^q
    <\infty .
  \]
\end{theorem}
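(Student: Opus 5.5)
The plan is to reduce to the case \(a=0\) and to a Newtonian potential, and then to run a dyadic version of the classical Cartan covering argument with the exponent \(q>n-2\) absorbed through a summable sequence.

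\emph{Reduction to a potential.} Replacing \(w\) by \(w-a\), we may assume \(\sup w=0\). If \(n=2\), a bounded subharmonic function on \(\mathbb R^2\) is constant, and the statement holds with \(A=\emptyset\). So assume \(n\ge3\). Let \(\mu=\Delta w\ge0\), a Radon measure. Since \(w\) is bounded, the Riesz decomposition on \(\mathbb R^n\) applies. It gives \(\int (1+|y|)^{2-n}\,d\mu(y)<\infty\) and
\[
w(x)=h-c_n\,U(x),\qquad U(x)=\int|x-y|^{2-n}\,d\mu(y),
\]
with \(h\) bounded harmonic, hence constant. Since \(U\ge0\), we have \(h\ge\sup w=0\), and also \(h\le 0\). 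Indeed, \(U\) tends to \(0\) along a sequence: its spherical means \(M_U(R)\) tend to \(0\) by dominated convergence, using the integrability of \(|y|^{2-n}\). Hence \(h=0\), and it suffices to show that \(U(x)\to0\) as \(|x|\to\infty\) outside a set \(A\) of the stated form.

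\emph{Dyadic splitting.} For \(j\ge1\) let \(S_j=\{2^{j}\le|x|<2^{j+1}\}\) and \(\mu_j=\mu\lfloor\{2^{j-1}\le|y|<2^{j+2}\}\). Set \(m_j=\mu_j(\mathbb R^n)\) and \(s_j=m_j2^{-j(n-2)}\). The integrability of \(\mu\) gives \(\sum_j s_j<\infty\). For \(x\in S_j\), the contribution of \(\mu-\mu_j\) to \(U(x)\) is bounded by \(C\int_{|y|<2^{j-1}}2^{-j(n-2)}\,d\mu+C\int_{|y|\ge 2^{j+2}}|y|^{2-n}\,d\mu\). The second term is a tail of a convergent integral. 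The first term tends to \(0\) by dominated convergence, since \(2^{-j(n-2)}\le C|y|^{2-n}\) there and the factor \((|y|/2^j)^{n-2}\to0\) pointwise. So only \(U_j(x)=\int|x-y|^{2-n}\,d\mu_j\) on \(S_j\) must be controlled.

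\emph{Cartan covering (the main step).} Fix \(\eta>0\) so small that \(q'=n-2+\eta<q\) still satisfies \(q/q'>1\). The key lemma is the following. For a finite measure \(\nu\) of mass \(m\) and \(\varepsilon>0\), the set \(\{U_\nu>\varepsilon\}\) is covered by balls \(B(z_k,\rho_k)\) with \(\sum\rho_k^{q}\le C(m/\varepsilon)^{q/(n-2)}\).

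To prove it, set \(H=(m/\varepsilon)^{1/(n-2)}\) and call \(x\) \emph{good} if \(\nu(B(x,r))\le \delta\,\varepsilon\, r^{n-2}(r/H)^{\eta}\) for all \(r\le H\). Summing over dyadic radii for a good \(x\) gives \(U_\nu(x)\le C\delta\varepsilon+\varepsilon/2^{n-2}\)-type bounds; here the factor \((r/H)^\eta\) makes the dyadic sum converge, and the far part contributes \(\le m H^{2-n}\) up to constants. With \(\delta\) small, this forces \(U_\nu(x)\le\varepsilon\). Each bad \(x\) has a radius \(r_x\le H\) with \(\nu(B(x,r_x))>\delta\varepsilon H^{-\eta}r_x^{q'}\). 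Applying a Vitali (or Besicovitch) subcover \(\{B(z_k,r_k)\}\) and enlarging each ball by a factor \(5\) gives
\[
\sum_k r_k^{q'}\le C\,m H^{\eta}/\varepsilon=CH^{q'}.
\]
Since \(r_k\le H\), we get \(\sum r_k^q\le H^{q-q'}\sum r_k^{q'}\le CH^q\), which is the lemma. This is where the condition \(q>n-2\) is used, and it is the step I expect to require the most care.

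\emph{Choice of thresholds and assembly.} Apply the lemma to \(\mu_j\) with \(\varepsilon_j=s_j^{\theta}\), where \(\theta\in(0,1)\) satisfies \((1-\theta)q/(n-2)\ge1\). If \(s_j=0\), take no balls and use any \(\varepsilon_j\to0\). The balls produced for level \(j\) that meet \(S_j\) have \(\|x_k\|\gtrsim 2^j\), so
\[
\sum_k\Bigl(\frac{r_k}{\|x_k\|}\Bigr)^q\le C\,2^{-jq}\Bigl(\frac{m_j}{\varepsilon_j}\Bigr)^{q/(n-2)}=C\,s_j^{(1-\theta)q/(n-2)}.
\]
This is summable in \(j\) because \(s_j\to0\) and \(\sum s_j<\infty\). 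Let \(A\) be the union over \(j\) of these balls. For \(x\in S_j\setminus A\) we get \(U_j(x)\le\varepsilon_j\to0\), which together with the far-part estimate gives \(U(x)\to0\), i.e. \(w(x)\to a\) as \(\|x\|\to\infty\) with \(x\notin A\).
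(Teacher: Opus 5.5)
Your proof is correct. The paper gives no argument of its own: it quotes \cite[Theorem 3.1]{MR3636634} and \cite[Lemma 3.10]{MR460672}. What you wrote is essentially the classical proof behind those citations. It writes \(w=a-c_nU\) by Riesz representation, localizes \(\mu\) dyadically, and uses a Cartan-type covering lemma with Hausdorff exponent \(q>n-2\), which is exactly what makes the dyadic sum at good points converge.

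Two cosmetic adjustments are needed.
\begin{enumerate}
\item With \(H=(m/\varepsilon)^{1/(n-2)}\) the far part equals \(\varepsilon\) rather than something strictly smaller. Enlarge \(H\) by a fixed factor, for example \(H=(2m/\varepsilon)^{1/(n-2)}\).
\item The level-\(j\) radii are only bounded by \(C s_j^{(1-\theta)/(n-2)}2^j\). So the claim \(\|x_k\|\gtrsim 2^j\) holds only once this bound is at most \(2^{j-1}\), that is, for large \(j\). Drop the finitely many initial levels; this is harmless since only \(\|x\|\to\infty\) matters, and it also avoids a ball centered at the origin.
\end{enumerate}
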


We only need the following consequence.

\begin{corollary}
  \label{cor:Cartan-density}\autocites[]{MR3636634}
  Let \(u\) be a bounded non-negative subharmonic function on
  \(\mathbb R^n\), \(n\ge2\), with
  \(\sup_{\mathbb R^n}u=1\).  Then for every \(z\in\mathbb R^n\),
  \[
    \lim_{r\to\infty} [u^2]_r(z)
    =
    \lim_{r\to\infty} [u]_r(z)
    =
    1 .
  \]
\end{corollary}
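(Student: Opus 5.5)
We derive the corollary from Theorem~\ref{thm:Cartan} (Cartan's lemma), applied to $w=u$ with $a=\sup u=1$. The idea is that the exceptional set $A$ occupies a vanishing fraction of the ball $B_r(z)$ as $r\to\infty$. Off this set $u$ tends to $1$, while on it $u$ is at most $1$. Hence both averages $[u]_r(z)$ and $[u^2]_r(z)$ tend to $1$.

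\textbf{Step 1: density of the exceptional set.} Fix $z$ and $q>n-2$. If $n=2$, any $q>0$ is allowed, and we choose $q=n$ in all cases. The exceptional set satisfies $A\subset\bigcup_k B(x_k,r_k)$ with $\sum_k (r_k/\|x_k\|)^q<\infty$. We claim
\[
|A\cap B_R(0)|/R^n\to0 \quad\text{as } R\to\infty.
\]
Given $\delta>0$, choose $K$ so that $\sum_{k>K}(r_k/\|x_k\|)^n<\delta$.

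\emph{Balls with $k\le K$.} These have total volume $C_K$, so they contribute $C_K R^{-n}\to0$.

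\emph{Balls with $k>K$ meeting $B_R(0)$.} Each has $r_k\le\delta^{1/n}\|x_k\|$. Since the ball meets $B_R(0)$, we have $\|x_k\|\le R+r_k$, so for $\delta<1/2$ we get $\|x_k\|\le 2R$. Therefore
\[
|B(x_k,r_k)|=c_n r_k^n\le c_n(2R)^n (r_k/\|x_k\|)^n,
\]
and summing over $k>K$ gives a total of at most $c_n 2^n R^n\delta$.

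Combining the two cases gives $\limsup_R |A\cap B_R|/R^n\le c\,\delta$. Since $\delta$ is arbitrary, the claim follows. This is the only nonroutine step. Choosing $q\ge n$ is what makes the sum of volumes directly controllable; a smaller $q$ would not suffice this way.

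\textbf{Step 2: the averages.} Write $[u]_r(z)=\int u(z-ry)\rho(y)\,dy$, and split $B_1$ into the two parts $E_r=\{y\in B_1: z-ry\in A\}$ and its complement. By Step~1, $|E_r|=r^{-n}|A\cap B_r(z)|\to0$, since $B_r(z)\subset B_{r+|z|}(0)$. Since $\rho$ is bounded, the integral of $\rho$ over $E_r$ tends to $0$.

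Given $\eta>0$, choose $R_\eta$ so that $u>1-\eta$ on $\{\|x\|>R_\eta\}\setminus A$. The set $\{y\in B_1:\|z-ry\|\le R_\eta\}$ has measure $O(r^{-n})$, so it also contributes negligibly.

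Using $0\le u\le1$, and hence $0\le u^2\le1$, we obtain
\[
1\ge [u]_r(z)\ge (1-\eta)\Bigl(1-\int_{E_r}\rho-O(r^{-n})\Bigr),
\]
and the same bound with $(1-\eta)^2$ for $[u^2]_r(z)$. Letting $r\to\infty$ and then $\eta\to0$ gives both limits equal to $1$.
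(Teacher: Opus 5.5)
Your proposal is correct and follows the paper's proof essentially step for step: Cartan's lemma with $q=n$, a packing estimate giving $|A\cap B_R(0)|/R^n\to0$, and then the passage to the averages, which you spell out in Step~2 while the paper leaves it implicit. One small slip: from $\|x_k\|\le R+r_k\le R+\delta^{1/n}\|x_k\|$ you only get $\|x_k\|\le R/(1-\delta^{1/n})$, which is $\le 2R$ when $\delta\le 2^{-n}$ but not for every $\delta<1/2$, so either take $\delta$ that small or keep the factor $(1-\delta^{1/n})^{-n}$ as the paper does.
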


\begin{proof}
  We apply Theorem~\ref{thm:Cartan} with \(q=n\) and let \(A\) be the exceptional
  set provided by the lemma.  It suffices to prove that the asymptotic
  density of \(A\) vanishes:
  \[
    \lim_{r \to \infty}
    \frac{|B(0,r) \cap A|}{|B(0,r)|}
    =
    0 .
  \]
  The rest of the proof is a standard packing estimate.  Fix
  \(0<\varepsilon<1\) and choose \(K\) such that
  \[
    \sum_{k=K}^\infty
    \left(\frac{r_k}{\|x_k\|}\right)^n
    <
    \varepsilon .
  \]
  Then for \(k\ge K\),
  \[
    r_k \le \varepsilon^{1/n}\|x_k\|,
  \]
  and hence
  \[
    \|x_k\|-r_k
    \ge
    \bigl(1-\varepsilon^{1/n}\bigr)\|x_k\|.
  \]
  Therefore
  \[
    \begin{aligned}
    \limsup_{r\to\infty}
    \frac{|B(0,r) \cap A|}{|B(0,r)|}
    &\le
    \limsup_{r\to\infty}
    \frac{1}{r^{n}}
    \sum_{k\,:\,\|x_k\|-r_k \le r} r_k^{n}  \\
    &\le
    \limsup_{r\to\infty}
    \sum_{k\le K} \frac{r_k^{n}}{r^{n}}
    +
    \sum_{k\ge K,\,\|x_k\|-r_k \le r}
    \frac{r_k^{n}}{(\|x_k\|-r_k)^{n}}   \\
    &\le
    \limsup_{r\to\infty}
    \sum_{k\le K} \frac{r_k^{n}}{r^{n}}
    +
    \bigl(1-\varepsilon^{1/n}\bigr)^{-n}
    \sum_{k=K}^\infty
    \left(\frac{r_k}{\|x_k\|}\right)^n  \\
    &\le
    \bigl(1-\varepsilon^{1/n}\bigr)^{-n}\varepsilon .
    \end{aligned}
  \]
  Since \(0<\varepsilon<1\) is arbitrary, the density is zero.  This
  proves the asserted convergence of the mollifications.
\end{proof}
\subsection{Upper envelopes and convex envelopes}

To prove that mollifications of viscosity subsolutions remain
subsolutions, we use upper \(\varepsilon\)-envelopes together with a
convex-envelope estimate.  The following definition and theorem are
taken from~\cite{MR1351007}.

\begin{definition}[Upper \(\varepsilon\)-envelope]
  \autocite[Section 5.1]{MR1351007}
  Let \(u\) be a continuous function in a domain
  \(\Omega\subset \mathbb R^n\), and let \(H\) be an open set with
  \(\overline H \subset \Omega\).  For \(\varepsilon>0\), the
  \emph{upper \(\varepsilon\)-envelope} of \(u\) (with respect to \(H\))
  is
  \[
    u^\varepsilon(z_0)
    =
    \sup_{z\in\overline H}
    \left\{
      u(z)
      -
      \frac{1}{\varepsilon}|z-z_0|^2
    \right\}
    +
    \varepsilon,
    \qquad z_0\in H .
  \]
\end{definition}

The next theorem was originally stated in
\cite{MR1351007} for real uniformly elliptic equations. 
The proof uses only the translation invariance of the equation and
therefore carries over verbatim to our subequation
\(\Hess_{\mathbb F}w\in\A\).
\begin{theorem}
  \label{thm:upper-envelope}
  \autocite[Theorem 5.1]{MR1351007}
  Let $\mathbb F \in \{\R, \C\}$. Let $V$ denote a finite-dimensional vector space over $\mathbb F$, let \(\A\) be admissible, and let \(w\) be a continuous viscosity subsolution of
  \(\Hess_{\mathbb{F}}(w)\in\A\) in a domain \(\Omega\subset V\).  Let
  \(H\Subset\Omega\).  Then for all sufficiently small
  \(\varepsilon>0\), the upper \(\varepsilon\)-envelope
  \(w^\varepsilon\) satisfies:
  \begin{enumerate}[label=(\roman*)]
    \item \(w^\varepsilon\in C(H)\) and
          \(w^\varepsilon \downarrow w\) uniformly on \(H\) as
          \(\varepsilon\to0\);
          \label{thm:upper-envelope:item1}

    \item at every \(z_0\in H\), there exists a concave paraboloid of
          opening \(2/\varepsilon\) touching \(w^\varepsilon\) from below
          at \(z_0\); in particular, \(w^\varepsilon\) is punctually
          second-order differentiable almost everywhere in \(H\);
          \label{thm:upper-envelope:item2}

    \item if \(H_1\) is an open set with \(\overline H_1\subset H\), then
          for all sufficiently small \(\varepsilon>0\),
          \(w^\varepsilon\) is a viscosity subsolution of
          \(\Hess_{\mathbb{F}}(w^\varepsilon)\in\A\) in \(H_1\); consequently
          \(\Hess_{\mathbb{F}}(w^\varepsilon)(z)\in\A\) for almost every
          \(z\in H_1\).
          \label{thm:upper-envelope:item3}
  \end{enumerate}
\end{theorem}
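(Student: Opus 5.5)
The plan is to follow the sup-convolution argument of \cite[Theorem 5.1]{MR1351007}. The only structural input is that the subequation \(\Hess_{\mathbb F}w\in\A\) has no dependence on \(x\), \(w\), or \(\nabla w\). It is also used that \(\A\) is closed and satisfies \ref{Cond:up}. In the complex case we identify \(V\) with \(\mathbb R^{2n}\). Then \(\Hess_{\mathbb C}\phi\) is a fixed linear projection of the real Hessian, and every touching argument below transfers verbatim. Set \(M=\sup_{\overline H}|w|<\infty\), which is finite because \(w\) is continuous on the compact set \(\overline H\).

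For \ref{thm:upper-envelope:item1}, take \(z=z_0\) in the supremum to get \(w^\varepsilon(z_0)\ge w(z_0)+\varepsilon\). Let \(z^*\in\overline H\) be a maximizer; it exists because \(\overline H\) is compact and \(w\) is continuous. Then
\[
|z^*-z_0|^2\le 2M\varepsilon,
\qquad\text{so}\qquad |z^*-z_0|\le C\sqrt{\varepsilon}.
\]
Hence \(w(z_0)\le w^\varepsilon(z_0)-\varepsilon\le w(z^*)\le w(z_0)+\omega(C\sqrt\varepsilon)\), where \(\omega\) is the modulus of continuity of \(w\) on \(\overline H\). This gives uniform convergence \(w^\varepsilon\to w\). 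Monotonicity in \(\varepsilon\) is immediate from the formula. Continuity of \(w^\varepsilon\) follows from semiconvexity: \(w^\varepsilon(z_0)+|z_0|^2/\varepsilon\) is a supremum of affine functions of \(z_0\), hence convex.

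For \ref{thm:upper-envelope:item2}, the function
\[
P(z)=w(z^*)-\tfrac1\varepsilon|z-z^*|^2+\varepsilon
\]
lies below \(w^\varepsilon\) and equals it at \(z_0\). So \(P\) is a concave paraboloid of opening \(2/\varepsilon\) touching \(w^\varepsilon\) from below at \(z_0\). Since \(w^\varepsilon+|z|^2/\varepsilon\) is convex, Alexandrov's theorem gives punctual second-order differentiability almost everywhere.

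Item \ref{thm:upper-envelope:item3} is the key step, and the main point to check is that the maximizer is interior. Take \(\varepsilon\) so small that \(C\sqrt\varepsilon<\operatorname{dist}(H_1,\partial H)\). Then for \(z_0\in H_1\) every maximizer \(z^*\) lies in \(H\subset\Omega\), where \(w\) is a subsolution. Let \(\phi\in C^2\) touch \(w^\varepsilon\) from above at \(z_0\), and set \(h=z_0-z^*\). For \(z\) near \(z^*\), the point \(z+h\) lies near \(z_0\), and the definition of \(w^\varepsilon\) with the competitor \(z\) gives
\[
w(z)-\tfrac1\varepsilon|h|^2+\varepsilon\le w^\varepsilon(z+h)\le\phi(z+h),
\]
with equality at \(z=z^*\). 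Thus \(\psi(z)=\phi(z+h)+|h|^2/\varepsilon-\varepsilon\) touches \(w\) from above at \(z^*\). The subsolution property gives \(\Hess_{\mathbb F}\psi(z^*)=\Hess_{\mathbb F}\phi(z_0)\in\A\). This is exactly where translation invariance enters: a shifted test function has the same Hessian, and the equation does not depend on position.

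For the almost-everywhere statement, let \(z\in H_1\) be a point of punctual second differentiability, with second-order Taylor polynomial \(Q\). For each \(\delta>0\), the function \(Q+\delta|\cdot-z|^2\) touches \(w^\varepsilon\) from above locally at \(z\). Therefore \(\Hess_{\mathbb F}w^\varepsilon(z)+\delta\,\Hess_{\mathbb F}|\cdot|^2\in\A\), and letting \(\delta\to0\) and using closedness of \(\A\) gives \(\Hess_{\mathbb F}w^\varepsilon(z)\in\A\).
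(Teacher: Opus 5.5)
Your proposal is correct and follows essentially the same route as the paper: the paper gives no independent proof, only citing the sup-convolution argument of \cite[Theorem 5.1]{MR1351007} and noting that it uses only translation invariance, and you have written out that argument in full. In particular, you identify the same key point the paper relies on, namely that a shifted test function \(\phi(\cdot+h)\) has the same Hessian and the subequation \(\Hess_{\mathbb F}w\in\A\) has no dependence on position. Beyond that, only the closedness of \(\A\) is used, in the almost-everywhere statement; \ref{Cond:up} is not actually needed anywhere in your argument.
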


\begin{theorem}
  \label{thm:caffarelli}
  \autocite[Lemma 3.5]{MR1351007}
  Let \(u\) be a continuous function in
  \(\overline B_d\subset\mathbb R^n\) with \(u\ge0\) on
  \(\partial B_d\).  Extend \(u\) by zero outside \(B_d\), set
  \(u^-=\max\{-u,0\}\), and let \(\Gamma_u\) be the convex envelope
  of \(-u^-\) in \(B_{2d}\).  Assume that for every
  \(x_0\in\overline B_d\cap\{u=\Gamma_u\}\) there exists a convex
  paraboloid of opening \(K\) touching \(\Gamma_u\) from above at
  \(x_0\) in \(B_\varepsilon(x_0)\), where \(K>0\) and
  \(0<\varepsilon\le d\).  Then \(\Gamma_u\in C^{1,1}(\overline B_d)\),
  and there exists a set \(A\subset B_d\) with
  \(|B_d\setminus A|=0\) such that \(\Gamma_u\) is second-order
  differentiable at every \(x\in A\).  Moreover,
  \[
    \sup_{B_d} u^-
    \le
    C(n)\, d
    \left(
      \int_{A\cap\{u=\Gamma_u\}}
      \det D^2\Gamma_u
    \right)^{1/n},
  \]
  where \(C(n)\) depends only on \(n\).
\end{theorem}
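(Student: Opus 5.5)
The plan is to follow the Aleksandrov--Bakelman--Pucci scheme in the form of Caffarelli--Cabr\'e, using only the convexity of \(\Gamma_u\) and the one-sided paraboloid hypothesis. Write \(M=\sup_{B_d}u^-\); we may assume \(M>0\). By construction \(\Gamma_u\) is convex, \(\Gamma_u\le -u^-\le0\) in \(B_{2d}\), and \(\Gamma_u\) is the supremum of the affine functions lying below \(-u^-\) on \(B_{2d}\). Since \(-u^-=0\) on \(B_{2d}\setminus B_d\), the infimum \(-M\) of \(\Gamma_u\) is attained at some \(x_0\in\overline B_d\cap\{u=\Gamma_u\}\).

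\textbf{Step 1 (regularity of \(\Gamma_u\)).} At a contact point \(x_0\in\overline B_d\cap\{u=\Gamma_u\}\), \(\Gamma_u\) is squeezed between a supporting affine function \(L\) (by convexity) and the given convex paraboloid of opening \(K\) (from above, in \(B_\varepsilon(x_0)\)). Hence \(\Gamma_u\) is differentiable at \(x_0\) and satisfies \(0\le \Gamma_u(x)-L(x)\le K|x-x_0|^2\) near \(x_0\).

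At a non-contact point \(x\in B_d\), I would use the standard extremal-point argument. The point \(x\) lies in the convex hull of at most \(n+1\) contact points, which may be in \(\overline B_d\) or in \(\partial B_{2d}\)-type regions where \(-u^-=0\). On that simplex \(\Gamma_u\) is affine, so the supporting plane there is shared with the contact points. Transferring the quadratic upper bound from the contact vertices along the simplex then gives a uniform two-sided quadratic control. This yields \(\Gamma_u\in C^{1,1}(\overline B_d)\) with a bound on the Lipschitz constant of \(\nabla\Gamma_u\) depending on \(K\) and \(\varepsilon\). Rademacher's theorem applied to \(\nabla\Gamma_u\) then produces a full-measure set \(A\) where \(\Gamma_u\) is twice differentiable.

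I expect this step to be the main obstacle. The delicate points are handling simplices with vertices outside \(B_d\), where only \(\Gamma_u\le0\) is known, and ensuring that the local radius \(\varepsilon\) suffices. I would first try to show that every segment through a non-contact point in \(B_d\) along which \(\Gamma_u\) is affine has its endpoints at contact points, and then interpolate the quadratic bounds along it.

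\textbf{Step 2 (gradient image).} I will show that \(B_{M/(3d)}\subset \nabla\Gamma_u\bigl(A'\cap\{u=\Gamma_u\}\bigr)\), where \(A'\) differs from \(A\cap\{u=\Gamma_u\}\) by a null set. Given \(|p|<M/(3d)\), minimize \(\Gamma_u(x)-p\cdot x\) over \(\overline B_{2d}\). Comparing the value at \(x_0\) with values on \(\partial B_{2d}\), where \(\Gamma_u\ge -u^-=0\) roughly (or using affine minorants), shows that the minimum is attained in \(B_d\). The supporting plane of slope \(p\) then touches \(-u^-\), and the touching point is a contact point where \(\nabla\Gamma_u=p\).

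\textbf{Step 3 (area formula).} Since \(\nabla\Gamma_u\) is Lipschitz on \(B_d\), the area formula gives
\[
|B_{M/(3d)}|\le \int_{\{u=\Gamma_u\}\cap A}\det D^2\Gamma_u ,
\]
where \(\det D^2\Gamma_u\ge0\) by convexity. Since \(|B_{M/(3d)}|=\omega_n\,(M/3d)^n\), rearranging gives
\[
M\le C(n)\,d\Bigl(\int_{A\cap\{u=\Gamma_u\}}\det D^2\Gamma_u\Bigr)^{1/n}.
\]
Only the contact set contributes because Step 2 places all the required gradients there; no vanishing of \(\det D^2\Gamma_u\) off the contact set is needed, only the inclusion of the ball in the image of the contact set.
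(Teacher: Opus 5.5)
Your Steps 2 and 3 are the standard ABP argument from Caffarelli--Cabr\'e, which the paper simply cites without proof, and they are sound apart from one imprecision. The minimum of $\Gamma_u-p\cdot x$ over $\overline B_{2d}$ need not be attained in $B_d$. What is true is that the supporting plane $\ell$ of slope $p$ satisfies $\ell\le 3d|p|-M<0$ on $\overline B_{2d}$, while $-u^-=0$ off $B_d$. Such a plane touches $-u^-$ somewhere, because $\Gamma_u$ is the supremum of affine minorants of the continuous $-u^-$ on the compact ball. So every touching point lies in $B_d$ and has $u<0$. It is therefore in $\{u=\Gamma_u\}$, with $\nabla\Gamma_u=p$ there.

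\textbf{The gap is Step 1.} It is the only part of the statement going beyond the classical lemma, and you leave it unresolved. Your suggested route cannot work: the affine segments through non-contact points of $B_d$ do not all end at contact points of the hypothesized type.

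When $M>0$ one has $\Gamma_u<0$ in $B_{2d}$. Hence the vertices of the face of $\Gamma_u$ containing $x\in\overline B_d$ lie either in $B_d\cap\{u=\Gamma_u\}$ or on $\partial B_{2d}$, where $\Gamma_u=-u^-=0$. Vertices on $\partial B_{2d}$ genuinely occur: for $u^-$ a smooth bump, the envelope is ruled between the bump and the sphere $\partial B_{2d}$. At such vertices no paraboloid is available, so nothing can be transferred from them.

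\textbf{The missing idea is a lower bound on the barycentric weight of the good vertices.} Write $x=\sum_i\lambda_ix_i$, let $I$ index the vertices in $B_d$, and set $\Lambda=\sum_{i\in I}\lambda_i$.
\begin{itemize}
\item Since $\Gamma_u$ is affine on the simplex, $\Gamma_u(x)=\sum_{i\in I}\lambda_iu(x_i)\ge-\Lambda M$.
\item Let $x_0$ be the minimum point and follow the ray from $x_0$ through $x$ to a point $z\in\partial B_{2d}$. Then $|x-x_0|<2d$ and $|z-x|\ge d$, so convexity gives $\Gamma_u(x)\le -M/3$ on $\overline B_d$.
\item Hence $\Lambda\ge 1/3$.
\end{itemize}
Now perturb only the good vertices:
\[
x+h=\sum_{i\in I}\lambda_i\Bigl(x_i+\frac{h}{\Lambda}\Bigr)+\sum_{i\notin I}\lambda_ix_i .
\]
Let $L$ be the affine function equal to $\Gamma_u$ on the simplex. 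For $i\in I$ the hypothesis gives $\Gamma_u\le L+\frac K2|\cdot-x_i|^2$ on $B_\varepsilon(x_i)$. Since $|h|/\Lambda\le 3|h|<\varepsilon\le d$, the perturbed points stay in $B_{2d}$ and within reach of these bounds. Convexity then yields $\Gamma_u(x+h)\le L(x+h)+\frac{3K}{2}|h|^2$ for $|h|<\varepsilon/3$. This is a paraboloid of opening $3K$ from above at every point of $\overline B_d$, with uniform radius.

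Comparing the supporting planes at two nearby points of the convex set $\overline B_d$ then gives $|\nabla\Gamma_u(x)-\nabla\Gamma_u(y)|\le CK|x-y|$, hence $\Gamma_u\in C^{1,1}(\overline B_d)$. Without the bound $\Lambda\ge 1/3$, the opening and radius of the transferred paraboloids degenerate as the weight on the good vertices tends to zero, and no uniform $C^{1,1}$ bound follows.
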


\section{Properties of viscosity solutions}

In this section, we collect the basic properties of viscosity solutions of the inclusion~\eqref{equation} under the admissibility conditions of
Definition~\ref{def:admissible}.  

\begin{proposition}\label{prop:Cond-properties}
  Let $\A\subset\mathcal{Q}(V)$ be admissible. Let \(g\in\mathcal Q_{++}(V)\) be a metric such that $\Tr_g A\ge 0$ for all $A\in\A$. Set $\Sigma=\partial\A$.
  \begin{enumerate}[label=(P\arabic*)]
    \item \label{PropConsCond:P1}Every \(C^2\) solution of \(\Hess_{\mathbb F}w\in\Sigma\) is a
viscosity solution.

    \item \label{PropConsCond:P2}
     If \(w_\ell\to w\) locally uniformly and each \(w_\ell\) is a viscosity
subsolution, supersolution, or solution, then \(w\) has the same
property.
    \item \label{PropConsCond:P3}
      If $w\in C^2$ satisfies $0\le w\le 1$ and $\Hess_{\mathbb F} w\in\A$, then
      $\Hess_{\mathbb F}\bigl(\frac12 w^2\bigr)\in\A$.
    \item \label{PropConsCond:P4}
      If $v,w\in C^2$ satisfy $\Hess_{\mathbb F} v,\Hess_{\mathbb F} w\in\A$, then $\Hess_{\mathbb F}\bigl(\frac12(v+w)\bigr)\in\A$.
    \item \label{PropConsCond:P5}
      Let $v,w$ be continuous viscosity subsolutions.  Then
      $\frac12(v+w)$ is also a viscosity subsolution.
      Consequently, for any continuous viscosity subsolution $w$, its
      standard smooth mollification $[w]_\varepsilon$ (defined by
      \eqref{eq:moll-orig} with respect to metric $g$) is again a (smooth) viscosity subsolution.
      Moreover, because $w$ is subharmonic, $[w]_\varepsilon \ge w$.
    \item \label{PropConsCond:P6}
      Let $\Omega\subset V$ be a bounded domain, $u$ a continuous
      viscosity supersolution and $v\in C^2$ a viscosity subsolution in
      $\Omega$.  If $\liminf_{z\to\partial\Omega}(u-v)(z)\ge0$, then
      $u\ge v$ in $\Omega$.
    \item \label{PropConsCond:P7}
      Let $U\in O(V)$ in the real case (resp.\ $U\in\mathrm{U}(V)$ in
      the complex case). If $w$ satisfies $\Hess_{\mathbb F} w\in\Sigma$ in the
      viscosity sense, then $v(z)=w(Uz)$ satisfies
      $\Hess_{\mathbb F} v\in U^\ast\Sigma U$ in the viscosity sense, where, for \(S\subset\mathcal Q(V)\), we write
\[
U^\ast S U:=\{U^\ast A U:A\in S\}.
\]

    \item \label{PropConsCond:P8}
      Let $w_\ell \to w$ locally uniformly, and let $U_\ell\in O(V)$ (resp.\ $U_\ell\in\mathrm{U}(V)$) be
      orthogonal (resp.\ unitary) matrices converging to $U$.  If
      $\Hess_{\mathbb F} w_\ell\in U_\ell^\ast\Sigma U_\ell$ in the viscosity sense, then
      $\Hess_{\mathbb F} w\in U^\ast\Sigma U$ in the viscosity sense.
    \item \label{PropConsCond:P9}
           Identify \(V\) with \(\mathbb F^n\) so that the metric
      \(g\) from Definition~\ref{def:admissible} is the canonical
      Euclidean (if \(\mathbb F=\mathbb R\)) or Hermitian (if
      \(\mathbb F=\mathbb C\)) inner product.  Let
      \[
        N = \{0\}^{n-1}\times\mathbb F \subset V
      \]
      and write points of \(V\) as \(z=(z',\zeta)\) with
      \(z'\in\mathbb F^{n-1}\).  If \(w\) is a viscosity solution of
      \(\Hess_{\mathbb F}w\in\Sigma\) and \(w(z)=w(z',0)\) for all
      \(\zeta\), then the restricted function
      \[
        v(z') = w(z',0)
      \]
      is a viscosity subsolution of
      \(\Hess_{\mathbb F}v\in\A_{/N}\).  If, in addition,
      \(\Sigma_{/N}=\partial(\A_{/N})\), then \(v\) is also a viscosity
      supersolution of \(\Hess_{\mathbb F}v\in\Sigma_{/N}\), hence a full
      viscosity solution.

  \end{enumerate}
\end{proposition}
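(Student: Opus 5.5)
We treat the nine items one at a time. Most are standard facts of viscosity theory, adapted to the constraint set \(\A\) using its closedness, convexity, ellipticity \ref{Cond:up}, and the trace condition \ref{Cond:subharmonic} with respect to \(g\).

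\textbf{Items \ref{PropConsCond:P1}, \ref{PropConsCond:P2}, \ref{PropConsCond:P3}, \ref{PropConsCond:P4}.}
For \ref{PropConsCond:P1}, suppose \(w\in C^2\) and \(w-\phi\) has a local maximum at \(x_0\). Then \(\Hess_{\mathbb F}\phi(x_0)-\Hess_{\mathbb F}w(x_0)\in\mathcal Q_+(V)\), so \ref{Cond:up} puts \(\Hess_{\mathbb F}\phi(x_0)\) in \(\A\).

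For the supersolution side we need \(\overline{\A^c}+(-\mathcal Q_+)\subset\overline{\A^c}\). This holds because \(\A^c-\mathcal Q_+\subset\A^c\), which is \ref{Cond:up} read contrapositively. Since \(\Sigma\subset\overline{\A^c}\), the claim follows.

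For \ref{PropConsCond:P2}, use the standard stability argument. Perturb \(\phi\) by \(|x-x_0|^4\) to make the maximum strict. Strict maxima persist under uniform convergence at points \(x_\ell\to x_0\). Closedness of \(\A\) and of \(\overline{\A^c}\) then passes the constraint to the limit.

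For \ref{PropConsCond:P3}, compute
\(\Hess_{\mathbb F}(\tfrac12w^2)=w\,\Hess_{\mathbb F}w+\nabla w\otimes\overline{\nabla w}\).
Since \(0\le w\le1\), convexity together with \(0\in\A\) gives \(w\Hess_{\mathbb F}w\in\A\). The rank-one term lies in \(\mathcal Q_+\), so \ref{Cond:up} finishes.

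For \ref{PropConsCond:P4}, the Hessian is linear and \(\A\) is convex.

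\textbf{Item \ref{PropConsCond:P5}.}
The sum of two merely continuous subsolutions is the delicate part. We plan to use Theorem~\ref{thm:upper-envelope}. The envelopes \(v^\varepsilon,w^\varepsilon\) are semiconvex, so by Alexandrov/Jensen they are twice differentiable a.e., with Hessians in \(\A\) a.e. Convexity then gives \(\Hess_{\mathbb F}\tfrac12(v^\varepsilon+w^\varepsilon)\in\A\) a.e.

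To upgrade this a.e.\ statement to a viscosity statement for the semiconvex function \(\tfrac12(v^\varepsilon+w^\varepsilon)\), we argue by contradiction. Suppose a test function \(\phi\) touches from above at \(x_0\) with \(\Hess_{\mathbb F}\phi(x_0)\notin\A\). Separate \(\Hess_{\mathbb F}\phi(x_0)\) from \(\A\) by a linear functional \(\ell\); by \ref{Cond:up}, \(\ell\) is induced by some \(S\in\mathcal Q_+\). Then \(\tfrac12(v^\varepsilon+w^\varepsilon)-\phi\) is a linear-elliptic subsolution of \(\Tr(S\,\cdot)\ge\delta\) with an interior maximum. Jensen's lemma, or equivalently the ABP estimate of Theorem~\ref{thm:caffarelli} applied to the perturbed difference, gives a contradiction. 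We then let \(\varepsilon\to0\) and apply \ref{PropConsCond:P2}.

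\emph{This step is the main obstacle}, because \(S\) may be degenerate. To handle this we add \(\eta g\) to \(S\), which is harmless after shrinking \(\delta\).

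The mollification statement follows by approximating the integral by Riemann sums of translates. Translates are subsolutions by translation invariance, and iterated averaging handles convex combinations. The limit is taken via \ref{PropConsCond:P2}. Finally, \(\Tr_g\ge0\) on \(\A\) makes \(w\) \(g\)-subharmonic, and radial mollifiers dominate subharmonic functions, so \([w]_\varepsilon\ge w\).

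\textbf{Items \ref{PropConsCond:P6}, \ref{PropConsCond:P7}, \ref{PropConsCond:P8}.}
For \ref{PropConsCond:P6}, replace \(v\) by \(v_\delta=v+\delta(|z|^2-R^2)\). Its Hessian lies in \(\A+\delta\mathcal Q_{++}\subset\operatorname{int}\A\), once we check that \(\A+\mathcal Q_{++}\) is open in \(\A\)'s interior, which follows from \ref{Cond:up}. If \(u-v_\delta\) had a negative interior minimum, \(v_\delta\) would be a \(C^2\) lower test function for \(u\). That forces \(\Hess_{\mathbb F}v_\delta\in\overline{\A^c}\), contradicting \(\Hess_{\mathbb F}v_\delta\in\operatorname{int}\A\). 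Let \(\delta\to0\).

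Item \ref{PropConsCond:P7} is the chain rule applied to test functions \(\phi\circ U^{-1}\).

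For \ref{PropConsCond:P8}, combine \ref{PropConsCond:P7} with the proof of \ref{PropConsCond:P2}. Closedness of the limit uses continuity of \((U,A)\mapsto U^*AU\).

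\textbf{Item \ref{PropConsCond:P9}.}
Given \(\psi\) touching \(v\) from above at \(z_0'\), set \(\phi(z',\zeta)=\psi(z')+\eta|\zeta|^2\). This touches \(w\) from above, so \(q_N^*\Hess\psi+\eta\,\Hess|\zeta|^2\in\A\). Letting \(\eta\to0\) and using closedness gives \(q_N^*\Hess\psi\in\A\), i.e.\ \(\Hess\psi\in\A_{/N}\).

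For the supersolution claim, use \(\phi=\psi-\eta|\zeta|^2\) touching from below. This gives \(q_N^*\Hess\psi-\eta(\cdot)\in\overline{\A^c}\), and in the limit \(\Hess\psi\in(\overline{\A^c})_{/N}\).

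It remains to show \((\overline{\A^c})_{/N}\subset\overline{(\A_{/N})^c}\) when \(\Sigma_{/N}=\partial(\A_{/N})\). If \(B\) were in \(\operatorname{int}\A_{/N}\), then \(q_N^*B\in\A\cap\overline{\A^c}=\Sigma\). That would put \(B\in\Sigma_{/N}=\partial(\A_{/N})\), a contradiction.
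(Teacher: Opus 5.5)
Your proposal is correct and follows the paper's proof item by item: you use (A2) in (P1), the standard stability argument in (P2), and upper $\varepsilon$-envelopes followed by an ABP/Jensen contact-set argument in (P5), as well as the $\delta|z|^2$ perturbation in (P6) and the same interior-point contradiction in (P9). The only deviation is in (P5), where you first linearize by a separating functional $S\in\mathcal Q_+(V)$; the paper instead applies Theorem~\ref{thm:caffarelli} directly and invokes (A2) at a twice-differentiable contact point $z_1$, where $\Hess_{\mathbb F}(\varphi+\delta|z-z_0|^2)(z_1)\ge\tfrac12(\Hess_{\mathbb F}v^\varepsilon+\Hess_{\mathbb F}w^\varepsilon)(z_1)$, so the degeneracy of $S$ that you flag is not actually an obstacle.
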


\begin{proof}
  \textbf{\ref{PropConsCond:P1}.}
  Let $\varphi$ be a $C^2$ function touching $w$ from above at $z_0$.
  Then $\Hess_{\mathbb F}\varphi(z_0)\ge\Hess_{\mathbb F} w(z_0)$.  Since
  $\Hess_{\mathbb F} w(z_0)\in\Sigma\subset\A$, \ref{Cond:up}
  implies that $\Hess_{\mathbb F}\varphi(z_0)\in\A$; hence $w$ is a viscosity subsolution.

  Now, let $\varphi$ touch $w$ from below at $z_0$.  Then
  $\Hess_{\mathbb F}\varphi(z_0)\le\Hess_{\mathbb F} w(z_0)$.  If
  $\Hess_{\mathbb F}\varphi(z_0)\in\operatorname{int}\A$, then
  \[
    \Hess_{\mathbb F} w(z_0)= \Hess_{\mathbb F}\varphi(z_0)
    +(\Hess_{\mathbb F} w(z_0)-\Hess_{\mathbb F}\varphi(z_0))
    \in \operatorname{int}\A + \mathcal Q_+(V)
    \subset \operatorname{int}\A,
  \]
  contradicting $\Hess_{\mathbb F} w(z_0)\in\Sigma=\partial\A$.  Therefore
  $\Hess_{\mathbb F}\varphi(z_0)\notin\operatorname{int}\A$, which means $w$ is a
  viscosity supersolution.  Thus $w$ is a viscosity solution.

  \textbf{\ref{PropConsCond:P2}.}
  We treat the subsolution case; the supersolution case is similar.  Let $\varphi\in C^2$ touch $w$ from above at $z_0$.  For
  any small ball $B_r(z_0)$, any $\varepsilon>0$ and any large index
  $\ell_0$, a suitable vertical translation of
  $\varphi+\varepsilon|z-z_0|^2$ touches $w_\ell$ from above at a point
  $z_\ell\in B_r(z_0)$ for all $\ell\ge \ell_0$.  Hence
  $\Hess_{\mathbb F}(\varphi+\varepsilon|z-z_0|^2)(z_\ell)\in\A$.  Letting $r\to0$
  and then $\varepsilon\to0$, the closedness of $\A$ yields
  $\Hess_{\mathbb F}\varphi(z_0)\in\A$.

  \textbf{\ref{PropConsCond:P3}.}
  In the real case,
  \[
    \Hess_{\mathbb R}(\tfrac12 w^2)
    = w\,\Hess_{\mathbb R} w
      + (\partial_i w\,\partial_j w)_{1\le i,j\le n},
  \]
  while in the complex case
  \[
    \Hess_{\mathbb C}(\tfrac12 w^2)
    = w\,\Hess_{\mathbb C} w
      + (\partial_i w\,\partial_{\bar\jmath} w)_{1\le i,j\le n}.
  \]
  In both cases, the second term on the right-hand side of the equality lies in $\mathcal Q_+(V)$.  If $w\in[0,1]$ and
  $\Hess_{\mathbb F} w\in\A$, \ref{Cond:origin} and \ref{Cond:convex} imply that 
  $w\,\Hess_{\mathbb F} w\in\A$, and \ref{Cond:up}
  then yields $\Hess_{\mathbb F} (\tfrac12 w^2)\in\A$.

  \textbf{\ref{PropConsCond:P4}.}
  This follows immediately from convexity of $\A$.

  \textbf{\ref{PropConsCond:P5}.}
  Fix any $\Omega \Subset \mathbb F^n$.
  Let $H\Subset\Omega$ and let $v^\varepsilon,w^\varepsilon$ be the upper
  $\varepsilon$-envelopes of $v,w$ in $H$
  (\autoref{thm:upper-envelope}).  For all sufficiently small
  $\varepsilon$, $v^\varepsilon$ and $w^\varepsilon$ are continuous, punctually
  second-order differentiable almost everywhere, and satisfy
  $\Hess_{\mathbb F} v^\varepsilon,\Hess_{\mathbb F} w^\varepsilon\in\A$ a.e.\ in a fixed
  open set $H_1\Subset H$.

  Suppose $\varphi\in C^2$ touches $\frac12(v^\varepsilon+w^\varepsilon)$
  from above at $z_0\in H_1$.  Choose a small ball
  $B_r(z_0)\subset H_1$ such that
  $\varphi\ge\frac12(v^\varepsilon+w^\varepsilon)$ on
  $\overline B_r(z_0)$ with equality at $z_0$.  For $\delta>0$, set
  \[
    u(z)=\varphi(z)+\delta|z-z_0|^2
         -\delta r^2-\frac12\bigl(v^\varepsilon(z)+w^\varepsilon(z)\bigr),
    \qquad z\in\overline B_r(z_0).
  \]
  Then $u(z_0)=-\delta r^2$ and $u\ge0$ on $\partial B_r(z_0)$.

  By \autoref{thm:upper-envelope}\ref{thm:upper-envelope:item2},
\(v^\varepsilon\) and \(w^\varepsilon\) are semiconvex.  Hence,
\[
  u=\varphi+\delta|z-z_0|^2-\delta r^2
    -\frac12(v^\varepsilon+w^\varepsilon)
\]
is semiconcave on \(\overline B_r(z_0)\).  Consequently, at every contact
point of \(\Gamma_u\) with \(u\), the function \(\Gamma_u\) can be touched
from above by a convex paraboloid with a uniform opening.  Thus the condition in \autoref{thm:caffarelli} is satisfied.
Applying \autoref{thm:caffarelli} to $u$ on $B_r(z_0)$ and
  using $u(z_0)<0$, we conclude that the contact set
  $\{u=\Gamma_u\}$ has positive Lebesgue measure.

  The functions $v^\varepsilon,w^\varepsilon$ are punctually second-order differentiable almost everywhere and satisfy
  $\Hess_{\mathbb F} v^\varepsilon,\Hess_{\mathbb F} w^\varepsilon\in\A$ at points where they are punctually second-order differentiable. Let $A$ be the set from \autoref{thm:caffarelli}. We can pick a point $z_1\in B_r(z_0)\cap\{u=\Gamma_u\}\cap A$ at which $v^\varepsilon,w^\varepsilon,u, \Gamma_u$ are all punctually twice differentiable,
  $\Hess_{\mathbb F} v^\varepsilon(z_1),\Hess_{\mathbb F} w^\varepsilon(z_1)\in\A$. At this point,
  \[
    \HessR u(z_1) \ge \HessR \Gamma_u(z_1) \ge 0 .
  \]
  The inequality $\HessR u(z_1)\ge0$ implies $\Hess_{\mathbb F} u(z_1)\ge0$.  From
  $\Hess_{\mathbb F} u=\Hess_{\mathbb F}(\varphi+\delta|z-z_0|^2)
  -\frac12(\Hess_{\mathbb F} v^\varepsilon+\Hess_{\mathbb F} w^\varepsilon)$ we obtain
  \begin{equation}
      \label{add4}
    \Hess_{\mathbb F}(\varphi+\delta|z-z_0|^2)(z_1)
    \ge \frac12\bigl(\Hess_{\mathbb F} v^\varepsilon(z_1)
                   +\Hess_{\mathbb F} w^\varepsilon(z_1)\bigr).
  \end{equation}
  The right-hand side of \eqref{add4} lies in $\A$ by \ref{Cond:convex}. \ref{Cond:up} then implies that $\Hess_{\mathbb F}(\varphi+\delta|z-z_0|^2)(z_1)\in\A$.  Letting $\delta\to0$
  and then $r\to0$, closedness of $\A$ yields
  $\Hess_{\mathbb F}\varphi(z_0)\in\A$.  Hence
  $\frac12(v^\varepsilon+w^\varepsilon)$ is a viscosity subsolution.  By
 \ref{PropConsCond:P2}, letting $\varepsilon\to0$ shows that $\frac12(v+w)$ is a viscosity subsolution.

  The mollification statement follows because $[w]_\varepsilon$ can be
  written as a limit of convex combinations of translates of $w$, each of
  which is a subsolution. By \ref{PropConsCond:P2}, the
  limit remains a subsolution.  The pointwise inequality
  $[w]_\varepsilon\ge w$ holds because $w$ is subharmonic.

  \textbf{\ref{PropConsCond:P6}.}
  Assume $\liminf_{z\to\partial\Omega}(u-v)(z)\ge0$. Fix $\eta>0$. By continuity,
  for all sufficiently small $\delta>0$,
  \[
    u+\eta > v+\delta|z|^2
    \qquad\text{on a neighbourhood of }\partial\Omega.
  \]

  Suppose that $v+\delta|z|^2 > u+\eta$ somewhere in $\Omega$.  Translate $v+\delta|z|^2$ downwards until it touches $u+\eta$ from below at some
  interior point $z_0$.  At $z_0$ we have
  \[
    \Hess_{\mathbb F}(v+\delta|z|^2)(z_0)\in\A+\mathcal Q_{++}(V) \subset \operatorname{int}\A,
  \]
  contradicting the definition of a viscosity supersolution $u+\eta$, which
  requires that the Hessian of any test function touching from below lies
  in $\overline{\mathcal Q(V)\setminus\A}$.  Hence
  $ u+\eta \geq v+\delta|z|^2 $ on $\Omega$.  Letting $\delta\to0$ and then $\eta\to0$ gives $v\le u$.

  \textbf{\ref{PropConsCond:P7}.}
Let $v(z)=w(Uz)$ with $U$ orthogonal (resp.\ unitary).
Suppose $\varphi\in C^2$ touches $v$ from above at $z_0$.
Set $\tilde\varphi(z)=\varphi(U^\ast z)$.  Then $v\le\varphi$
in a neighborhood of $z_0$ with equality at $z_0$.  Hence
$w\le\tilde\varphi$ in a neighborhood of $Uz_0$, with equality at $Uz_0$. By the definition of viscosity subsolution,
$\Hess_{\mathbb F}\tilde\varphi(Uz_0)\in\A$. Note that
\[
\Hess_{\mathbb F}\tilde\varphi(Uz_0)
= U\,\Hess_{\mathbb F}\varphi(z_0)\,U^\ast,
\]
which implies $\Hess_{\mathbb F}\varphi(z_0)\in U^\ast\A U$.
Since $\partial(U^\ast\A U)=U^\ast\Sigma U$, $v$ is a viscosity
subsolution of $\Hess_{\mathbb F} v\in U^\ast\Sigma U$.

Analogously, $v$ is a viscosity supersolution of
$\Hess_{\mathbb F} v\in U^\ast\Sigma U$.  Together, $v$ is a viscosity
solution of $\Hess_{\mathbb F} v\in U^\ast\Sigma U$.

  \textbf{\ref{PropConsCond:P8}.}
  Let $\varphi$ touch $w$ from above at $z_0$.  For $\delta>0$, a vertical translation of
  $\varphi+\delta|z|^2$ touches $w_\ell$ from above at points
  $z_\ell\to z_0$.  Because $\Hess_{\mathbb F} w_\ell\in U_\ell^\ast\A U_\ell$,
  \[
    \Hess_{\mathbb F}(\varphi+\delta|z|^2)(z_\ell)\in U_\ell^\ast\A U_\ell,
  \]
  hence $U_\ell\Hess_{\mathbb F}(\varphi+\delta|z|^2)(z_\ell)U_\ell^\ast\in\A$.
  Letting $\ell\to\infty$ and then $\delta\to0$, the closedness of $\A$
  gives $U\Hess_{\mathbb F}\varphi(z_0)U^\ast\in\A$. Hence $w$ is a viscosity subsolution of $\Hess_{\mathbb F} w \in U^\ast \Sigma U$. The supersolution case is analogous.

  \textbf{\ref{PropConsCond:P9}.}
  Let \(\varphi\) be a \(C^2\) function on \(\mathbb F^{n-1}\). We define a function on \(V\) by
  \[
    \tilde\varphi(z',\zeta)=\varphi(z').
  \]
  Because \(\tilde\varphi\) is independent of \(\zeta\) and the
  coordinates are chosen so that \(g\) is the standard inner product,
  its Hessian is
  \[
    \Hess_{\mathbb F}\tilde\varphi(z_0,0)
    = \begin{pmatrix}
        \Hess_{\mathbb F}\varphi(z_0) & 0 \\
        0 & 0
      \end{pmatrix}
    = q_N^\ast\bigl(\Hess_{\mathbb F}\varphi(z_0)\bigr),
  \]
  where \(q_N:V\to V/N\cong\mathbb F^{n-1}\) is the quotient map.

  Suppose \(\varphi\) touches \(v\) from above at \(z_0'\in\mathbb F^{n-1}\).
  Then \(\tilde\varphi\) touches \(w\) from above at \((z_0',0)\).
  Since \(w\) is a viscosity subsolution,
  \[
    \Hess_{\mathbb F}\tilde\varphi(z_0',0) \in \A .
  \]
  Hence \(q_N^\ast(\Hess_{\mathbb F}\varphi(z_0')) \in \A\), which
  means \(\Hess_{\mathbb F}\varphi(z_0') \in \A_{/N}\).  Thus
  \(v\) is a viscosity subsolution of
  \(\Hess_{\mathbb F}v\in\A_{/N}\).

  Now assume \(\Sigma_{/N}=\partial(\A_{/N})\).  Let \(\varphi\) touch
  \(v\) from below at \(z_0'\); then \(\tilde\varphi\) touches \(w\)
  from below at \((z_0',0)\).  Because \(w\) is a viscosity
  supersolution,
  \[
    \Hess_{\mathbb F}\tilde\varphi(z_0',0)
    \in \overline{\mathcal Q(V)\setminus\A}.
  \]
  If \(\Hess_{\mathbb F}\varphi(z_0')\) were an interior point
  of \(\A_{/N}\), then
  \(q_N^\ast(\Hess_{\mathbb F}\varphi(z_0'))\) would belong to
  \(\A\) by definition of \(\A_{/N}\), which together with the supersolution condition forces
  \[
    q_N^\ast(\Hess_{\mathbb F}\varphi(z_0'))
    \in \partial\A = \Sigma .
  \]
  Hence \(\Hess_{\mathbb F}\varphi(z_0') \in \Sigma_{/N}\).  By
  hypothesis \(\Sigma_{/N} = \partial(\A_{/N})\), contradicting that
  \(\Hess_{\mathbb F}\varphi(z_0') \in \operatorname{int}(\A_{/N})\).
  Therefore
  \[
    \Hess_{\mathbb F}\varphi(z_0')\notin\operatorname{int}(\A_{/N}),
  \]
  and \(v\) is a viscosity supersolution of
  \(\Hess_{\mathbb F}v\in\Sigma_{/N}\).  Combined with the subsolution
  property, \(v\) is a viscosity solution.
\end{proof}
\section{Proof Of \autoref{thm:main}}
In this section, we provide a proof of our main result. The forward direction is proved via a dimension reduction argument.
The following proposition is the key step.
It states that if the theorem already holds in dimensions \(\le n-1\), then any bounded, uniformly $\alpha$-H\"{o}lder continuous ($0 < \alpha \leq 1$) solution in dimension \(n\)
must be constant, provided the dichotomy in
\autoref{defn:Liouville-admissibility} propagates correctly to every
codimension-one subspace. The converse direction is established
separately at the end of this section.

\begin{proposition}\label{thm:mainproto}
  Let $0 < \alpha \leq 1$. Let \(\A\subset\mathcal Q(V)\) be admissible and set
  \(\Sigma=\partial\A\).  Let \(g\in\mathcal Q_{++}(V)\) be as in
  \ref{Cond:subharmonic} and assume that \(n=\dim_{\mathbb F}V\ge2\).
  Suppose that for every \(\mathbb F\)-linear subspace
  \(N\subset V\) with \(\dim_{\mathbb F}N=1\) one has either
  \begin{enumerate}[label=(\alph*)]
    \item \(\A_{/N}\) is contained in a terminal set, or
    \item \(\Sigma_{/N}=\partial(\A_{/N})\) and every bounded, uniformly $\alpha$-H\"{o}lder continuous viscosity solution \(u\colon V/N\to\mathbb R\) of
          \(\Hess_{\mathbb F}u\in\Sigma_{/N}\) is constant.
  \end{enumerate}
  Then every bounded, uniformly $\alpha$-H\"{o}lder continuous viscosity solution
  \(w\colon V\to\mathbb R\) of \(\Hess_{\mathbb F}w\in\Sigma\) is
  constant.
\end{proposition}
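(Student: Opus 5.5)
We argue by contradiction, following the blow-down scheme of Dinew--Ko{\l}odziej. Let \(w\) be a bounded, uniformly \(\alpha\)-H\"older, nonconstant viscosity solution. After an affine normalization we may assume \(0\le w\le1\) and \(\sup w=1\). By \ref{Cond:subharmonic}, \(w\) is subharmonic with respect to \(g\).

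The first goal is a \emph{blow-down limit} that depends on fewer variables. We pick points \(z_k\) and scales \(R_k\to\infty\) and set
\[
w_k(z)=w(z_k+R_k U_k z),
\]
with \(U_k\in O(V)\) (resp.\ \(\mathrm U(V)\)) chosen below. Translations preserve the equation, and by \ref{PropConsCond:P7} each \(w_k\) solves \(\Hess_{\mathbb F}w_k\in U_k^\ast\Sigma U_k\). Dilations rescale Hessians by \(R_k^2\), which is harmless only if \(\A\) is a cone. This is exactly why the H\"older bound is needed instead: we do not rescale in the equation. Instead we exploit \([w_k]_{0,\alpha}=R_k^{\alpha}[w]_{0,\alpha}\) in the opposite direction. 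We take the translations \(z_k\) at unit scale, and obtain compactness from the uniform H\"older bound via Arzel\`a--Ascoli.

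The nontrivial direction to be extracted is a direction of asymptotic constancy. Corollary~\ref{cor:Cartan-density} gives \([w]_r\to1\) and \([w^2]_r\to1\), so the \(L^2\)-oscillation of \(w\) over large balls tends to \(0\) along most centers. Combined with \ref{PropConsCond:P3}--\ref{PropConsCond:P5}, which say that \(\tfrac12 w^2\) and mollifications remain subsolutions, we look for a sequence of points where some directional difference quotient degenerates. Concretely, we aim to find \(z_k\) and unit vectors \(e_k\) with
\[
\sup_{|z|\le k}\,|w(z_k+z+te_k)-w(z_k+z)|\to0 \quad\text{for each fixed } t,
\]
while the oscillation of \(w(z_k+\cdot)\) on a fixed ball stays bounded below. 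Rotating by \(U_k\) sends \(e_k\) to the last coordinate axis. Passing to subsequences with \(U_k\to U\) and locally uniform convergence \(w_k\to w_\infty\), the limit satisfies the following:
\begin{itemize}
\item by \ref{PropConsCond:P2} and \ref{PropConsCond:P8}, \(w_\infty\) solves \(\Hess_{\mathbb F}w_\infty\in U^\ast\Sigma U\);
\item \(w_\infty\) is bounded and \(\alpha\)-H\"older;
\item \(w_\infty\) is independent of \(\zeta\);
\item \(w_\infty\) is nonconstant.
\end{itemize}
Since \(U^\ast\A U\) has the same quotient structure along \(N=U^{-1}(\text{axis})\), \ref{PropConsCond:P9} makes \(v(z')=w_\infty(z',0)\) a bounded H\"older viscosity subsolution of \(\Hess v\in\A_{/N}\). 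In case (a), \(v\) is bounded above and \(\A_{/N}\) lies in a terminal set, so \(v\) is constant by Lemma~\ref{lem:terminal-stability}. In case (b), \ref{PropConsCond:P9} makes \(v\) a full solution of \(\Hess v\in\Sigma_{/N}\), hence constant by hypothesis. In either case \(w_\infty\) is constant, a contradiction.

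The main obstacle is producing such \(z_k,e_k\): we need a direction in which \(w\) becomes asymptotically invariant, while nonconstancy survives in the limit. To get it, we compare \(w\) with its mollification \([w]_r\ge w\), which satisfies \([w]_r\to1\) and is a smooth subsolution. By the comparison principle \ref{PropConsCond:P6}, applied on large balls against solutions, together with an Alexandrov--Caffarelli contact-set estimate (Theorem~\ref{thm:caffarelli}) applied to \(\tfrac12 w^2\) versus \([\tfrac12 w^2]_r\), we hope to show that where \(w\) is far from \(1\), the Hessian of \([w]_r\) must be degenerate in at least one direction. This follows from a measure-of-contact argument using \(\det\), which forces an eigenvalue to be small. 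The H\"older bound then converts the small eigenvalue into near-invariance along the corresponding eigendirection at unit scale. Nonconstancy of the limit is secured by choosing \(z_k\) where \(w(z_k)\le 1-\delta\) while \(\sup w=1\) is nearly attained at bounded distance; the H\"older modulus keeps these two points at controlled distance. This step is where I expect the real technical work, mirroring the Dinew--Ko{\l}odziej gradient argument with the \(C^{0,1}\) bound replaced by \(C^{0,\alpha}\).
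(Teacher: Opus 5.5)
Your reduction skeleton matches the paper's Case~1: translate and rotate at unit scale with no dilation, extract a locally uniform limit from the H\"older bound, show it is independent of one direction, descend to \(V/N\) by \ref{PropConsCond:P7}, \ref{PropConsCond:P8}, \ref{PropConsCond:P9}, and invoke (a) or (b). The gap is the step you flag as the main obstacle. It is in fact the whole content of the proof, and the mechanism you sketch for it would not work.

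A small eigenvalue of \(\Hess_{\mathbb F}[w]_r\) says nothing about smallness of the \emph{first} derivative in that direction. The H\"older bound cannot upgrade second-order degeneracy to near-invariance. Likewise, H\"older continuity only gives a \emph{lower} bound \(|x-y|\ge(\delta/[w]_{0,\alpha;V})^{1/\alpha}\) on the distance between points whose values differ by \(\delta\). So it cannot keep a near-maximum point at bounded distance from \(z_k\). As written, nothing produces points where some direction degenerates while nonconstancy also survives in the limit.

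The missing idea is a dichotomy on \emph{gradients}, paired with a selection inequality that itself encodes nonconstancy at a fixed scale. The gradient dichotomy is driven by
\(\Hess_{\mathbb F}(\tfrac12[w]_\varepsilon^2)=[w]_\varepsilon\Hess_{\mathbb F}[w]_\varepsilon+\nabla[w]_\varepsilon\otimes\overline{\nabla}[w]_\varepsilon\).
Normalize \(\inf w=0\) and \(\sup w=1\). The paper then considers points \(z\) with
\(w(z)\le\tfrac12\bigl([\tfrac12[w]_\varepsilon^2]_{2r}(z)+[w]_\beta(z)-\tfrac43\bigr)\),
which forces \([w]_\beta(z)-w(z)\ge\tfrac56\) at the fixed scale \(\beta\). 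There are two cases.

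\textbf{Degenerate case.} Suppose that along such points \(\int_{B_{r_\ell}(z_\ell)}|\nabla_{\xi_\ell}[w]_{\varepsilon_\ell}|^2\to0\), with \(\varepsilon_\ell\to0\) and \(r_\ell\to\infty\). Your reduction, with weak \(L^2\) lower semicontinuity giving \(\nabla_n w_\star=0\), makes the limit a constant \(c\in[0,1]\). The selection inequality passes to the limit as \(c\le\tfrac12(\tfrac12+c-\tfrac43)\), which is absurd. It is this, not nonconstancy of \(w_\star\), that yields the contradiction.

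\textbf{Nondegenerate case.} Otherwise there is a uniform bound \(\int_{B_r(z)}|\nabla_\xi[w]_\varepsilon|^2\ge c_\beta\) for all unit \(\xi\) at all such \(z\). Set \(v=\tfrac12\bigl([\tfrac12[w]_\varepsilon^2]_{2r}+[w]_\beta-\tfrac43-\tau_0|z|^2\bigr)\) and \(\Omega=\{w<v\}\). On \(\Omega\) the mollified gradient tensor is bounded below by \(C(r,\rho)c_\beta\) times the metric, which absorbs the \(-\tau_0|z|^2\) term. Hence \(v\) is a smooth subsolution on the bounded set \(\Omega\) with \(v=w\) on \(\partial\Omega\), and \ref{PropConsCond:P6} forces \(\Omega=\varnothing\). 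On the other hand, Corollary~\ref{cor:Cartan-density}, together with \(w(0)<\tfrac1{18}\) and \([w]_\varepsilon\ge w\), puts \(0\) in \(\Omega\), a contradiction.

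Finally, Theorem~\ref{thm:caffarelli} enters only through \ref{PropConsCond:P5}. It is not used as a contact-set estimate in the main argument.
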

\begin{proof}
Identify \(V\) with \(\mathbb F^n\) so that the metric
      \(g\) from Definition~\ref{def:admissible} is the canonical
      Euclidean (if \(\mathbb F=\mathbb R\)) or Hermitian (if
      \(\mathbb F=\mathbb C\)) inner product. We will also use this metric $g$ to define the $[\cdot]_{0,\alpha;V}$ semi-norm.\par
      
  Suppose, for contradiction, that \(w\) is a non-constant bounded, uniformly $\alpha$-H\"{o}lder continuous viscosity solution.  Normalize \(w\) so that
  \(\sup_V w=1\) and \(\inf_V w=0\) by replacing \(w\) with
  \[
    \widetilde w(z)
    =
    C^{-2}\bigl(w(Cz)-\inf\nolimits_V w\bigr),
    \qquad
    C=\bigl(\sup\nolimits_V w-\inf\nolimits_V w\bigr)^{1/2}.
  \]
  The scaled function is still a bounded, uniformly $\alpha$-H\"{o}lder continuous viscosity solution of
  the same equation.

  Throughout the proof, balls, Lebesgue measure, and mollifications are
  taken with respect to the underlying real vector space. In the
  complex case, for a complex unit vector \(\xi\), the notation
  \(\nabla_\xi\) means the complex directional derivative
  \(\partial_\xi\).

  We split the argument into two mutually exclusive cases.

  \noindent\textbf{Case~1.}
  There exist a constant \(\beta>0\), a sequence
  \(\varepsilon_\ell\to0\), radii \(r_\ell\to\infty\), points \(z_\ell\in V\),
  and unit vectors \(\xi_\ell\in V\) such that
  \begin{equation}\label{eq:case1-cond}
    \begin{cases}
      \displaystyle
      \frac12\Bigl(
        \bigl[\tfrac12 [w]_{\varepsilon_\ell}^2\bigr]_{2r_\ell}(z_\ell)
        + [w]_\beta(z_\ell) - \frac43
      \Bigr) \ge w(z_\ell),\\[10pt]
      \displaystyle
      \lim_{\ell\to\infty}
      \int_{B_{r_\ell}(z_\ell)}
      |\nabla_{\xi_\ell}[w]_{\varepsilon_\ell}|^2\,dV = 0 .
    \end{cases}
  \end{equation}
  Here \([\,\cdot\,]_\varepsilon\) denotes mollification with the kernel
  defined in~\eqref{eq:moll-orig}.

  Let
  \[
    G_\ell(z)=z_\ell+U_\ell z,
  \]
  where \(U_\ell\in O(V)\) in the real case
  (resp.\ \(U_\ell\in\mathrm U(V)\) in the complex case) is chosen so that
  \(U_\ell e_n=\xi_\ell\).  Thus \(G_\ell(0)=z_\ell\), and the last coordinate
  direction in the new coordinates corresponds to \(\xi_\ell\).  After
  passing to a subsequence, we may assume that
  \[
    U_\ell\to U
  \]
  with \(U\in O(V)\) in the real case
  (resp.\ \(U\in\mathrm U(V)\) in the complex case).  Since the
  mollifier is radial, the conditions become
  \begin{equation}\label{eq:rotated-case1}
    \begin{cases}
      \displaystyle
      \frac12\Bigl(
        \bigl[\tfrac12 [w\circ G_\ell]_{\varepsilon_\ell}^2\bigr]_{2r_\ell}(0)
        + [w\circ G_\ell]_\beta(0) - \frac43
      \Bigr)
      \ge (w\circ G_\ell)(0),\\[10pt]
      \displaystyle
      \lim_{\ell\to\infty}
      \int_{B_{r_\ell}(0)}
      |\nabla_n [w\circ G_\ell]_{\varepsilon_\ell}|^2\,dV = 0 .
    \end{cases}
  \end{equation}
    In the real case, \(\nabla_n\) is the derivative in the \(n\)-th coordinate direction; in the complex case, it is the complex directional derivative in that direction.

  The family \(\{[w\circ G_\ell]_{\varepsilon_\ell}\}\) is uniformly bounded with $\sup_{\ell} [[w\circ G_\ell]_{\varepsilon_\ell}]_{0,\alpha;V} < \infty$.
  Hence, after passing to a subsequence, it converges locally uniformly to
  a bounded, uniformly $\alpha$-H\"{o}lder continuous function \(w_\star\).  Since \(w\) is uniformly $\alpha$-H\"{o}lder continuous and \(\varepsilon_\ell\to0\), 
  \begin{equation}\label{converg}
    w\circ G_\ell\to w_\star
  \end{equation}
  locally uniformly.  By stability under orthogonal (resp.\ unitary)
  transformations \ref{PropConsCond:P7} and under uniform limits
  \ref{PropConsCond:P8}, \(w_\star\) is a bounded, uniformly $\alpha$-H\"{o}lder continuous viscosity
  solution of
  \[
    \Hess_{\mathbb F} w_\star \in U^\ast\Sigma U .
  \]

  The gradient decay condition forces \(w_\star\) to be independent of
  the last coordinate. Indeed, by \eqref{eq:rotated-case1},
  \[
    \bigl\{\nabla_n [w\circ G_\ell]_{\varepsilon_\ell}\bigr\}
  \]
  is bounded in \(L^2_{\mathrm{loc}}(V)\).  Passing to a further
  subsequence, we may assume that
  \[
    \nabla_n [w\circ G_\ell]_{\varepsilon_\ell}
    \rightharpoonup g
    \qquad\text{weakly in }L^2_{\mathrm{loc}}(V)
  \]
  for some \(g\in L^2_{\mathrm{loc}}(V)\).   By \eqref{converg},
  %the distributional derivative of \(w_\star\) in the last coordinate direction coincides with \(g\).  Thus
  \[
    g=\nabla_n w_\star
    \qquad\text{in the sense of distributions.}
  \]
  Fix \(R>0\).  For all sufficiently large \(\ell\), we have
  \(B_R(0)\subset B_{r_\ell}(0)\).  Hence the second line of
  \eqref{eq:rotated-case1} gives
  \[
    \int_{B_R(0)}
    \bigl|\nabla_n [w\circ G_\ell]_{\varepsilon_\ell}\bigr|^2\,dV
    \longrightarrow 0 .
  \]
  By the weak lower semicontinuity of the $L^2$-norm,
  \[
    \int_{B_R(0)}|\nabla_n w_\star|^2\,dV
    \le
    \liminf_{\ell\to\infty}
    \int_{B_R(0)}
    \bigl|\nabla_n [w\circ G_\ell]_{\varepsilon_\ell}\bigr|^2\,dV
    =0 .
  \]
Therefore \(\nabla_n w_\star=0\) almost everywhere on \(B_R(0)\).
  Since \(R>0\) is arbitrary, \(\nabla_n w_\star=0\) almost everywhere on
  \(V\).  Consequently, \(w_\star\) is independent of the last
  coordinate.

  Identify \(V\) with \(\mathbb F^n\) so that \(g\) is the canonical
  metric, and let
  \(
    N = \{0\}^{n-1}\times\mathbb F \subset V
  \)
  be the one-dimensional subspace spanned by the last coordinate.
  Then \(w_\star \circ U^\ast\) is independent of the direction \(U N\),
  and induces a function \(\underline w \colon V/(U N) \to \mathbb R\).
  By \ref{PropConsCond:P7} and \ref{PropConsCond:P9},  \(\underline w\) is a viscosity
  subsolution of
  \begin{equation}\label{quot-sub}
    \Hess_{\mathbb F} \underline w \in \A_{/U N}.
  \end{equation}

  We now use the dichotomy in the statement of the proposition for the
  subspace \(U N\).
  \begin{itemize}
    \item If the first alternative holds, by the definition of a terminal
          set \autoref{def:liouvillecone}, \(\underline{w}\) is constant.

    \item If \(\Sigma_{/U N}=\partial(\A_{/U N})\) and all bounded  uniformly $\alpha$-H\"{o}lder continuous solutions in dimension \(n-1\) are constant, then the
          supersolution part of \ref{PropConsCond:P9} applies and
          \(\underline w\) is a viscosity solution of~\eqref{quot-sub}. We conclude
 by the inductive hypothesis 
 that \(\underline w\) is constant.
  \end{itemize}
Thus \(\underline w\equiv c\) for some \(c\in[0,1]\).   Hence $w_\star\equiv c$ on $V$.

  Now examine the first inequality in \eqref{eq:rotated-case1}.  Since
  \(0\le w\le1\), we have
  \[
    \bigl[\tfrac12 [w\circ G_\ell]_{\varepsilon_\ell}^2\bigr]_{2r_\ell}(0)
    \le \frac12 .
  \]
  Moreover, \(w\circ G_\ell\to c\) locally uniformly, so
  \[
    [w\circ G_\ell]_\beta(0)\to c .
  \]
  Hence, taking $\limsup_{\ell\to\infty}$ of the first line in \eqref{eq:rotated-case1}, we get \[ \frac12c-\frac{5}{12}= \frac12\Bigl(\frac12+c-\frac43\Bigr)\geq c  \] contradicting $c\in [0,1]$.  \textbf{Case~1} cannot occur.
      
  \noindent\textbf{Case~2.}
  Assume that \textbf{Case~1} does not occur. Then, by the negation of \textbf{Case~1}, for
every \(\beta>0\) there exist constants \(C_\beta>0\) and
\(c_\beta>0\) such that for all \(0<\varepsilon<C_\beta\), all
\(r>C_\beta^{-1}\), all \(z\in V\), and all unit vectors \(\xi\in V\),
  \begin{equation}\label{case2}
      \frac12\Bigl(
        \bigl[\tfrac12 [w]_\varepsilon^2\bigr]_{2r}(z)
        + [w]_\beta(z) - \frac43
      \Bigr) - w(z) \ge 0 \quad 
      \Longrightarrow\quad 
      \int_{B_r(z)}
      |\nabla_\xi [w]_\varepsilon|^2\,dV \ge c_\beta .
    \end{equation}

Since \(\inf_V w=0\), after recentering, we may assume $w(0)<\frac1{18}$.
By \autoref{cor:Cartan-density}, for the bounded non-negative
  subharmonic function \(w\), 
  \[
    \lim_{t\to\infty}[w]_t(0)
    =
    \lim_{t\to\infty}[w^2]_t(0)
    =
    1 .
  \]
  Choose \(\beta>0\) so large that
  \[
    [w]_\beta(0)>\frac{43}{45}.
  \]
  Let \(C_\beta,c_\beta\) be the constants given by the claim.  Choose
  \(0<\varepsilon<C_\beta\), and then choose \(r>C_\beta^{-1}\) so large that
  \[
    \bigl[\tfrac12 w^2\bigr]_{2r}(0)>\frac{22}{45}.
  \]
  Since \(w\) is subharmonic, the mollification inequality from
  \ref{PropConsCond:P5} gives
  \[
    [w]_\varepsilon\ge w .
  \]
  As \(0\le w\le1\), it follows that
  \begin{equation}\label{addd1}
    \bigl[\tfrac12 [w]_\varepsilon^2\bigr]_{2r}(0)
    \ge
    \bigl[\tfrac12 w^2\bigr]_{2r}(0)
    >
    \frac{22}{45}.
    \end{equation}
  Note that
  \[
    \frac1{18}
    =
    \frac12\Bigl(\frac{22}{45}+\frac{43}{45}-\frac43\Bigr),
  \]
  \eqref{addd1} implies
  \begin{equation}\label{w(0)}
    w(0)
    <
    \frac12\Bigl(
      \bigl[\tfrac12 [w]_\varepsilon^2\bigr]_{2r}(0)
      + [w]_\beta(0) - \frac43
    \Bigr).
    \end{equation}  
    
    Now choose a small parameter \(\tau_0>0\), to be fixed later, and define
  \begin{equation}\label{omega}
  \begin{aligned}
    v(z)
    &=
    \frac12\Bigl(
      \bigl[\tfrac12 [w]_\varepsilon^2\bigr]_{2r}(z)
      + [w]_\beta(z)
      - \frac43
      - \tau_0 |z|^2
    \Bigr),\\[4pt]
    \Omega
    &= \Bigl\{
      z\in V:
      w(z) < v(z)
    \Bigr\}.
  \end{aligned}
  \end{equation}
    Since the negative term \(-\frac{\tau_0}{2}|z|^2\) forces \(v(z)\to -\infty\) as \(|z|\to\infty\), while \(w\in[0,1]\), the set \(\Omega\) is bounded.
  Once we prove that \(v\) is a subsolution on \(\Omega\), the comparison principle
  \ref{PropConsCond:P6} applied to \(v\) and \(w\) on \(\Omega\) will yield a contradiction to \eqref{omega}.  Hence it remains to show that \(v\) is a subsolution on \(\Omega\).

  By \ref{PropConsCond:P3}, \ref{PropConsCond:P4}, and the mollification
  property \ref{PropConsCond:P5}, the functions
  \[
    [w]_\beta,\qquad
    [w]_\varepsilon,\qquad
    \frac12 [w]_\varepsilon^2,\qquad
    \bigl[\tfrac12 [w]_\varepsilon^2\bigr]_{2r}
  \]
  are smooth viscosity subsolutions of \(\Hess_{\mathbb F}(\cdot)\in\A\).
  It therefore suffices to verify that, for \(\tau_0>0\) sufficiently
  small depending only on \(c_\beta\), \(r\), and the mollifier \(\rho\),
  the function
  \[
    F(z)
    =
    \bigl[\tfrac12 [w]_\varepsilon^2\bigr]_{2r}(z)
    - \tau_0 |z|^2
  \]
  is also a smooth subsolution on \(\Omega\); indeed, by
  \ref{PropConsCond:P4}, we then have
  \begin{equation}
      \label{add101}
      v = \frac12(F + [w]_\beta - \frac43), 
  \end{equation}
  which is a subsolution on
  \(\Omega\).

  In the real case,
  \[
    \begin{aligned}
      \HessR(F)
      &=
      \HessR\Bigl(
        \bigl[\tfrac12 [w]_\varepsilon^2\bigr]_{2r}
      \Bigr)
      -2\tau_0 I_n  \\
      &=
      \bigl[
        [w]_\varepsilon \HessR([w]_\varepsilon)
        +
        \nabla [w]_\varepsilon\otimes\nabla [w]_\varepsilon
      \bigr]_{2r}
      -2\tau_0 I_n .
    \end{aligned}
  \]
  In the complex case,
  \[
    \begin{aligned}
      \HessC(F)
      &=
      \HessC\Bigl(
        \bigl[\tfrac12 [w]_\varepsilon^2\bigr]_{2r}
      \Bigr)
      -\tau_0 I_n  \\
      &=
      \bigl[
        [w]_\varepsilon \HessC([w]_\varepsilon)
        +
        \nabla [w]_\varepsilon\otimes\overline{\nabla} [w]_\varepsilon
      \bigr]_{2r}
      -\tau_0 I_n .
    \end{aligned}
  \]
  In both settings the gradient term is positive semidefinite. For every unit vector \(\xi\in V\), we estimate
  \begin{equation}\label{add6}
    \begin{aligned}
      \bigl[|\nabla_\xi [w]_\varepsilon|^2\bigr]_{2r}(z)
      &=
      \int_V
      |\nabla_\xi [w]_\varepsilon(z-2ry)|^2
      \rho(y)\,dV(y)  \\
      &\ge
      \int_{B_{1/2}(0)}
      |\nabla_\xi [w]_\varepsilon(z-2ry)|^2
      \rho(y)\,dV(y)  \\
      &\ge
      (2r)^{-\dim_{\mathbb R}V}
      \Bigl(\inf_{B_{1/2}(0)}\rho\Bigr)
      \int_{B_r(z)}
      |\nabla_\xi [w]_\varepsilon|^2\,dV .
    \end{aligned}
    \end{equation}
  If \(z\in\Omega\), the definition of $\Omega$ \eqref{omega} implies
  the antecedent in the \textbf{Case~2} claim.  Therefore,
  \begin{equation}\label{add7}
    \int_{B_r(z)}
    |\nabla_\xi [w]_\varepsilon|^2\,dV
    \ge c_\beta .
    \end{equation}
  By \eqref{add6} and \eqref{add7},
  \[
    \bigl[|\nabla_\xi [w]_\varepsilon|^2\bigr]_{2r}(z)
    \ge C(r,\rho)c_\beta ,
  \]
  where
  \[
    C(r,\rho)
    =
    (2r)^{-\dim_{\mathbb R}V}\inf_{B_{1/2}(0)}\rho
    >0 .
  \]
  
  Choose \(\tau_0>0\) sufficiently small so that the averaged gradient term dominates the negative quadratic contribution in the Hessian of $F$.  Thus, on
  \(\Omega\),
  \begin{equation}\label{F}
    \Hess_{\mathbb F}(F)
    \ge
    \bigl[
      [w]_\varepsilon\Hess_{\mathbb F}([w]_\varepsilon)
    \bigr]_{2r}.
    \end{equation}
  Since \(0\le [w]_\varepsilon\le1\) and
  \(\Hess_{\mathbb F}([w]_\varepsilon)\in\A\), the matrix
  \begin{equation}\label{F1}
      [w]_\varepsilon\Hess_{\mathbb F}([w]_\varepsilon)\in \A
  \end{equation}
  by \ref{Cond:origin} and \ref{Cond:convex}.  Its
  mollification is a convex combination of elements of \(\A\), hence
  still belongs to \(\A\). From \eqref{F}, \eqref{F1} and \ref{Cond:up},
  \[
    \Hess_{\mathbb F}(F)\in\A
    \qquad\text{on }\Omega .
  \]
Therefore, \(F\) is a smooth subsolution on \(\Omega\); and by \eqref{add101}, so is $v$. By \eqref{omega}  , $v = w$ on $\partial \Omega$. The comparison principle \ref{PropConsCond:P6} applied to \(v\) and \(w\) on \(\Omega\) yields \(w \ge v\) on \(\Omega\).
 By \eqref{omega} again, \(\Omega=\varnothing\). However,  \eqref{w(0)}  implies that \(0\in\Omega\). We have a contradiction.  Hence \textbf{Case~2} is also impossible.

  Both cases lead to contradictions.  Hence the original bounded, uniformly $\alpha$-H\"{o}lder continuous viscosity solution \(w\) must be constant.
\end{proof}
We first prove the forward direction of \autoref{thm:main}. With \autoref{thm:mainproto} in hand, the argument is a straightforward induction.
\begin{theorem}\label{thm:mainforward}
Let $0 < \alpha \leq 1$. Let
\(
\A\subset\mathcal Q(V)
\)
be Liouville admissible and let
\(
\Sigma=\partial\A.
\)
If \(u:V\to\mathbb R\) is a bounded, uniformly $\alpha$-H\"{o}lder continuous viscosity solution of
\[
\Hess_{\mathbb F}u\in\Sigma,
\]
then \(u\) is constant.
\end{theorem}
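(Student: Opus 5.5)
We argue by induction on \(n=\dim_{\mathbb F}V\), with inductive statement: \emph{for every finite-dimensional \(\mathbb F\)-vector space \(W\) with \(\dim_{\mathbb F}W\le n-1\) and every Liouville admissible \(\mathcal B\subset\mathcal Q(W)\), bounded uniformly \(\alpha\)-H\"older viscosity solutions of \(\Hess_{\mathbb F}u\in\partial\mathcal B\) are constant.}

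\textbf{Base cases.} If \(n=0\) there is nothing to prove. If \(n=1\) in the real case, or \(n=1\) in the complex case, then \(\A\) is terminal by Lemma~\ref{lem:low-dimensional-admissible-terminal}. A viscosity solution is in particular a continuous viscosity subsolution of \(\Hess_{\mathbb F}u\in\A\) that is bounded above, so it is constant. For the real case with \(n=1\) this is immediate; the case \(\dim_{\mathbb R}V=2\) is also covered by the same lemma but will equally fall under the inductive step.

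\textbf{Inductive step.} Let \(n\ge2\) and let \(\A\) be Liouville admissible. We verify the hypothesis of Proposition~\ref{thm:mainproto} for each line \(N\subset V\). Since \(n\ge2\), \(N\) is a nonzero proper subspace, so Definition~\ref{defn:Liouville-admissibility} gives two possibilities. If \(\A_{/N}\) is contained in a terminal set \(\mathcal T\), then alternative (a) holds. Indeed, \(\A_{/N}\) is admissible by Lemma~\ref{lem:admissibilityofquotient}, hence terminal by Lemma~\ref{lem:terminal-stability}\eqref{item:terminal-subset}; and in any case a subsolution for \(\A_{/N}\) is a subsolution for \(\mathcal T\), which is exactly how (a) is used in the proof of Proposition~\ref{thm:mainproto}. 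Otherwise \(\Sigma_{/N}=\partial(\A_{/N})\). By Lemma~\ref{lem:Liouvilleadmissibilityofquotient}, \(\A_{/N}\) is Liouville admissible in \(\mathcal Q(V/N)\), and \(\dim V/N=n-1\). The induction hypothesis therefore says that every bounded uniformly \(\alpha\)-H\"older viscosity solution of \(\Hess_{\mathbb F}u\in\partial(\A_{/N})=\Sigma_{/N}\) on \(V/N\) is constant, which is alternative (b). Proposition~\ref{thm:mainproto} then yields that \(u\) is constant.

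\textbf{Main point of care.} Proposition~\ref{thm:mainproto} identifies \(V\) with \(\mathbb F^n\) through the metric \(g\) of \ref{Cond:subharmonic}, and it uses the quotient \(V/N\) with the metric \(h_{/N}\). One must check that the inductive statement is formulated intrinsically, that is, for an arbitrary \(W\) with no choice of metric built in, so that it applies to \(V/N\) regardless of which metric is used there. This holds because H\"older finiteness is norm-independent and Liouville admissibility is defined without reference to a metric.
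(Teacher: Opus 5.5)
Your proposal is correct and is essentially the paper's proof. Both argue by induction on $\dim_{\mathbb F}V$ and, for each line $N$, verify the hypotheses of Proposition~\ref{thm:mainproto} using the Liouville admissibility dichotomy together with Lemma~\ref{lem:Liouvilleadmissibilityofquotient} and the induction hypothesis. The only cosmetic difference is the base case: you cite Lemma~\ref{lem:low-dimensional-admissible-terminal}, while the paper argues directly that \ref{Cond:subharmonic} makes the solution a bounded subharmonic function on $\mathbb R$ or $\mathbb C$.
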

\begin{proof}
 We proceed by induction on the dimension \(n=\dim_{\mathbb F}V\). 

  If $n = 1$,  by \ref{Cond:subharmonic}, any viscosity subsolution of \(\Hess_{\mathbb F}u\in\A\) is subharmonic.  A bounded viscosity
  solution of \(\Hess_{\mathbb F}u\in\Sigma\) is therefore a bounded subharmonic function on \(\mathbb R\) or \(\mathbb C\), which has to  be constant.  Hence the theorem holds in dimension one.

  Now, assume the theorem holds for all Liouville admissible sets in spaces of dimensions
  \(\le n-1\).  Let \(\A\subset\mathcal Q(V)\) be Liouville
  admissible with \(\dim_{\mathbb F}V=n\ge2\), let
  \(g\in\mathcal Q_{++}(V)\) be as in \ref{Cond:subharmonic}, and set
  \(\Sigma=\partial\A\).  Let \(u\colon V\to\mathbb R\) be a bounded, uniformly $\alpha$-H\"{o}lder continuous viscosity solution of \eqref{equation}.
We verify the hypotheses of \autoref{thm:mainproto}.  Let
  \(N\subset V\) be any one-dimensional \(\mathbb F\)-linear
  subspace.  By Liouville admissibility, one has either
  \begin{enumerate}[label=(\alph*)]
    \item\label{11} \(\A_{/N}\) is contained in a terminal set, or
    \item\label{22} \(\Sigma_{/N}=\partial(\A_{/N})\).
  \end{enumerate}
  Case~\ref{11} directly fulfills the first condition of
  \autoref{thm:mainproto}.
In case~\ref{22}, \(\A_{/N}\) is Liouville admissible by
  \autoref{lem:Liouvilleadmissibilityofquotient}.  The quotient space
  \(V/N\) has dimension \(n-1\).  By the induction hypothesis
  applied to \(\A_{/N}\),  the second alternative of \autoref{thm:mainproto} holds.

  In summary, for every one-dimensional subspace \(N\subset V\), we have verified the
  hypotheses of \autoref{thm:mainproto}. Therefore,  \autoref{thm:mainproto} implies that $u$ is constant. This completes the
  induction.
\end{proof}
We now prove the converse direction. 
\begin{proposition}\label{prop:implicationwithoutholder}
    Let $\A \subset \mathcal{Q}(V)$ be admissible. If every bounded
    entire viscosity solution of $\Hess_{\mathbb F} u \in \Sigma$ is
    constant, then $\A$ is Liouville admissible.
\end{proposition}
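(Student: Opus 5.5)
The plan is to prove the contrapositive. Assume $\A$ is not Liouville admissible, and construct a nonconstant bounded smooth entire solution of $\Hess_{\mathbb F}u\in\Sigma$ whose gradient is bounded. Such a solution is globally Lipschitz, so the construction also shows the H\"older version of the converse in \autoref{thm:main}.

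\emph{Step 1 (geometry).} Failure of \autoref{defn:Liouville-admissibility} gives a nonzero proper subspace $N\subset V$ with two properties: $\Sigma_{/N}\neq\partial(\A_{/N})$, and $\A_{/N}$ is not contained in any terminal set. Since $\A_{/N}$ is admissible by \autoref{lem:admissibilityofquotient}, the second property means in particular that $\A_{/N}$ itself is not terminal. By the dichotomy in \autoref{prop:boundary-dichotomy-reduction}, the first property forces $\Sigma_{/N}=\A_{/N}$. Consequently, any $C^2$ function $\tilde v$ on $V/N$ with $\Hess_{\mathbb F}\tilde v\in\A_{/N}$ pointwise has $q_N^*\Hess_{\mathbb F}\tilde v\in\Sigma$ pointwise.

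\emph{Step 2 (a bounded nonconstant subsolution on $V/N$).} Since $\A_{/N}$ is not terminal, there is a continuous, nonconstant, bounded-above viscosity subsolution $v$ of $\Hess_{\mathbb F}v\in\A_{/N}$ on $V/N$. It may be unbounded below, so I truncate. Pick points $a,b$ with $v(a)<v(b)$, choose $c\in(v(a),v(b))$, and set $v_1=\max(v,c)$.
\begin{itemize}
\item Constants are subsolutions, because $0\in\A_{/N}$.
\item The maximum of two continuous viscosity subsolutions is again a subsolution, by the standard argument: a test function touching $\max$ from above touches whichever function attains the max.
\item Hence $v_1$ is a continuous subsolution with $c\le v_1\le\sup v$, and it is nonconstant since $v_1(a)=c<v(b)=v_1(b)$.
\end{itemize}
Next, mollify: by \ref{PropConsCond:P5} applied to the admissible set $\A_{/N}$, the function $\tilde v=[v_1]_\varepsilon$ is a smooth subsolution. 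It is bounded, with all derivatives bounded (derivatives fall on the kernel). Since $[v_1]_\varepsilon\to v_1$ locally uniformly, $\tilde v$ is nonconstant for $\varepsilon$ small.

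\emph{Step 3 (pullback).} For a smooth function, the viscosity subsolution property is equivalent to the pointwise condition $\Hess_{\mathbb F}\tilde v\in\A_{/N}$, obtained by testing with $\tilde v$ itself. Set $u=\tilde v\circ q_N$. Because $q_N$ is linear, $\Hess_{\mathbb F}u=q_N^*\Hess_{\mathbb F}\tilde v$. By Step 1 this lies in $\Sigma$ everywhere, and by \ref{PropConsCond:P1} $u$ is a viscosity solution. Finally, $u$ is bounded and nonconstant (since $q_N$ is surjective), and globally Lipschitz, hence globally $C^{0,\alpha}$ for every $\alpha\in(0,1]$ when $|u|$ is bounded. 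This contradicts the hypothesis.

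The main delicate point is Step 2. Terminality only concerns bounded-above subsolutions, so the counterexample must be made two-sided bounded without losing nonconstancy or the subsolution property. The truncation by a constant, together with the max-stability of viscosity subsolutions, is the step I would check most carefully. The rest is the observation that $\Sigma_{/N}=\A_{/N}$ turns every subsolution on the quotient into a genuine solution upstairs.
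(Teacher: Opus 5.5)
Your proposal is correct and follows the paper's proof essentially step for step. Like the paper, you use the dichotomy of \autoref{prop:boundary-dichotomy-reduction} to get $\Sigma_{/N}=\A_{/N}$ with $\A_{/N}$ non-terminal, truncate by a constant, mollify via \ref{PropConsCond:P5}, and pull back by $q_N$, with the global Lipschitz bound giving the H\"older version exactly as in the paper's proof of \autoref{thm:main}; your choice of truncation level $c\in(v(a),v(b))$ and your explicit appeal to \ref{PropConsCond:P1} simply spell out two points the paper leaves implicit.
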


\begin{proof}
    Suppose that $\A$ is not Liouville admissible. By
    \autoref{prop:boundary-dichotomy-reduction}, there exists a proper
    nontrivial $\mathbb{F}$-linear subspace $N \subset V$ such that
    $\Sigma_{/N} = \A_{/N}$ and $\A_{/N}$ is not contained in any
    terminal set. In particular, $\A_{/N}$ is not terminal; hence there
    exists a nonconstant continuous viscosity subsolution $u$ of
    $\Hess_{\mathbb F} u \in \A_{/N}$ that is bounded above on $V/N$.

    Since the maximum of two viscosity subsolutions is again a viscosity subsolution, we may replace \(u\) by \(\max\{u,-C\}\), where \(C>0\) is
chosen sufficiently large. Therefore, we may assume that $u$ is bounded and
nonconstant.

    Let $h_{V/N}$ be the metric on $V/N$ provided by
    \autoref{lem:admissibilityofquotient} for the admissible set
    $\A_{/N}$. By \ref{PropConsCond:P5}, the mollification
    $[u]_\varepsilon$ with respect to $h_{V/N}$ remains a smooth
    subsolution of
    \[
        \Hess_{\mathbb F} [u]_\varepsilon \in \A_{/N}.
    \]
    Since $[u]_\varepsilon \to u$ locally uniformly as
    $\varepsilon \to 0$, we may choose $\varepsilon>0$ sufficiently small
    so that $[u]_\varepsilon$ is still nonconstant. Since
    $\A_{/N}=\Sigma_{/N}$, the pullback
    $q_N^*[u]_\varepsilon$ satisfies
    \[
        \Hess_{\mathbb F}(q_N^*[u]_\varepsilon)\in \Sigma .
    \]
    Moreover, $q_N^*[u]_\varepsilon$ is entire, bounded, and
    nonconstant, contradicting the hypothesis on $\A$.
\end{proof}

\begin{proof}[Proof of Theorem~\ref{thm:main}]
The proof of the preceding \autoref{prop:implicationwithoutholder} gives more than a bounded
counterexample. Indeed, after the truncation \(u\mapsto \max\{u,-C\}\),
the subsolution is bounded. Hence, for each fixed \(\varepsilon>0\), its
mollification \([u]_\varepsilon\) is bounded and globally Lipschitz on
\(V/N\). Choosing \(\varepsilon>0\) sufficiently small, it remains
nonconstant. Therefore the pullback \(q_N^*([u]_\varepsilon)\) is an entire, bounded,
globally Lipschitz, hence globally \(C^{0,\alpha}\), nonconstant
viscosity solution of
\(\Hess_{\mathbb F}u\in\Sigma\). This contradicts the assumed
\(C^{0,\alpha}\)-Liouville property.

This proves the converse implication. Combined with the forward implication proved above in \autoref{thm:mainforward}, this completes the proof of Theorem~\ref{thm:main}.
\end{proof}

\appendix

\section{Maximal Spectral Terminal Sets}

\label{Appendix1}The existence of terminal sets has been discussed in Proposition~\ref{prop:positive-cone-terminal}. However, in the real and spectral case,  we have more flexible constructions, which are linked to the classical works of Harvey and Lawson~\autocite{MR3884610}.

\begin{definition}\label{def:real-log-cone}
Let \(W\) be a real vector space of dimension \(m\), and let
\(
g\in\SSS_{++}(W)
\)
be a Euclidean metric. 
If \(m=1\), define
\[
\LLL_g(W)
=
\{A\in\SSS(W):\Tr_g A\geq 0\} .\]

If \(m\ge2\), define
\[
\LLL_g(W)=\Bigl\{A\in\SSS(W):(\Tr_g A)\,g+(m-2)A\in\SSS_+(W) \Bigr\}.
\]
We call \(\LLL_g(W)\) the \emph{real logarithmic cone} associated with
\(g\).
\end{definition}
  The real logarithmic cone $\LLL_g(W)$ is the cone defined in \autocite[Example 4.3]{MR3884610} with Riesz characteristic $2$, which depends on the chosen metric. The complex analogue of Riesz characteristic $2$ is simply the space of positive semidefinite Hermitian forms, and is, in particular, metric-independent.

\begin{proposition}
Let \((W,g)\) be a finite-dimensional real Euclidean vector space. Then the logarithmic cone $
\LLL_g(W)$
is a terminal set. \label{logterminal}
\end{proposition}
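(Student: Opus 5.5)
We will check that \(\LLL_g(W)\) is admissible, then prove the Liouville property by comparing with a logarithmic barrier, which solves the boundary equation. This is the real analogue of the classical proof that bounded-above subharmonic functions in the plane are constant.

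\emph{Admissibility and low dimensions.} Conditions \ref{Cond:origin}--\ref{Cond:convex} hold because \(A\mapsto(\Tr_gA)g+(m-2)A\) is linear and maps \(\SSS_+(W)\) into \(\SSS_+(W)\). For \ref{Cond:subharmonic}, take the \(g\)-trace of the defining form: \((2m-2)\Tr_gA\ge0\), so \(\Tr_gA\ge0\) on \(\LLL_g(W)\) with \(h=g\). For \(m\le2\) the set \(\LLL_g(W)\) is just \(\{\Tr_gA\ge0\}\), so terminality follows from Lemma~\ref{lem:low-dimensional-admissible-terminal}. Hence we assume \(m\ge3\) and identify \((W,g)\) with Euclidean \(\R^m\).

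\emph{The barrier.} Fix \(x_0\) and set \(\Phi(x)=\log|x-x_0|\) on \(W\setminus\{x_0\}\). Then
\[
D^2\Phi=|x-x_0|^{-2}\bigl(I-2e\otimes e\bigr),\qquad e=\frac{x-x_0}{|x-x_0|}.
\]
Its trace is \((m-2)|x-x_0|^{-2}\), so \((\Tr D^2\Phi)I+(m-2)D^2\Phi\) has eigenvalue \(2(m-2)|x-x_0|^{-2}\) on \(e^\perp\) and \(0\) on \(e\). Thus \(D^2(\varepsilon\Phi)\in\partial\LLL_g(W)\) for every \(\varepsilon>0\), since \(\LLL_g(W)\) is a cone.

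\emph{Comparison step (the main technical point).} Let \(u\) be a continuous subsolution with \(M=\sup u<\infty\). Fix \(0<r<R\), set \(a=\max_{|x-x_0|=r}u\), \(\varepsilon>0\), and
\[
\psi=a+\varepsilon\log\frac{|x-x_0|}{r}.
\]
Once \(R\) is large enough that \(\varepsilon\log(R/r)\ge M-a\), we have \(u\le\psi\) on both boundary spheres of the annulus \(r<|x-x_0|<R\). We claim \(u\le\psi\) throughout the annulus, arguing directly as in \ref{PropConsCond:P6}. Suppose not. For small \(\delta>0\), the function \(u-(\psi-\delta|x-x_0|^2)\) still has a positive value inside, and its boundary values are at most about \(\delta R^2\). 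Hence, after adjusting constants, it attains an interior positive maximum. At that point the smooth function \(\psi-\delta|x-x_0|^2+\mathrm{const}\) touches \(u\) from above, so its Hessian \(D^2\psi-2\delta I\) must lie in \(\LLL_g(W)\). This is impossible: \(D^2\psi\in\partial\LLL_g(W)\) and \(\LLL_g(W)+\SSS_{++}(W)\subset\operatorname{int}\LLL_g(W)\), so \(D^2\psi-2\delta I\in\LLL_g(W)\) would force \(D^2\psi\in\operatorname{int}\LLL_g(W)\). Handling the boundary bookkeeping with the \(\delta\)-perturbation correctly is the step I expect to need the most care.

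\emph{Conclusion.} Let \(R\to\infty\) and then \(\varepsilon\to0\). This gives \(u(x)\le\max_{|y-x_0|=r}u(y)\) for every \(x\) with \(|x-x_0|\ge r\). Now let \(r\to0\). Continuity of \(u\) (upper semicontinuity would suffice) gives \(\limsup_{r\to0}\max_{|y-x_0|=r}u\le u(x_0)\), so \(u(x)\le u(x_0)\) for all \(x\ne x_0\). Since \(x_0\) was arbitrary, exchanging the roles of \(x\) and \(x_0\) shows \(u\) is constant. Therefore \(\LLL_g(W)\) is terminal.
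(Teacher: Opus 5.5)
Your proposal is correct and follows essentially the same route as the paper: a logarithmic barrier whose Hessian lies in $\partial\LLL_g(W)$, comparison on the annuli $r<|x-x_0|<R$, then $R\to\infty$, $r\to0$, and exchanging the two points. The differences are minor. You use a fixed slope $\varepsilon$ and send it to zero, where the paper uses the slope $(M-a_r)/\log(R/r)$. You also verify admissibility of $\LLL_g(W)$ and prove the comparison step directly via the strict perturbation $-\delta|x-x_0|^2$, whereas the paper simply invokes comparison.
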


\begin{proof}
Let \(m=\dim W\). After choosing a \(g\)-orthonormal basis, we may
assume
\[
(W,g)=(\R^m,I_m).
\]

If \(m=1\) and $\HessR (u)\in \LLL_g(\R)$, then $u$ is subharmonic and therefore convex.
 A convex function on \(\R\) which is
bounded above is constant.

Assume \(m\ge2\). Let $u$ be bounded from above and
be a viscosity subsolution of
\[
\HessR u\in\LLL_g(\R^m).
\]
Set
\(
M=\sup_{\R^m}u.
\) For \(\mathbf x\neq0\),
\[
\HessR\log|\mathbf x|
=
\frac1{|\mathbf x|^2}
\left(
I_m-2\frac{\mathbf x\otimes\mathbf x}{|\mathbf x|^2}
\right).
\]
Its eigenvalues are
\[
-\frac1{|\mathbf x|^2},
\frac1{|\mathbf x|^2},\dots,
\frac1{|\mathbf x|^2}.
\]
It follows that
\[
\HessR\log|\mathbf x|
\in
\partial\LLL_g(\R^m)
\qquad
(\mathbf x\neq0).
\]

Fix \(\mathbf x_0\in\R^m\). For \(0<r<R\), let
\[
A_{r,R}
=
B_R(\mathbf x_0)\setminus\overline{B_r(\mathbf x_0)},
\]
and define
\[
a_r
=
\sup_{\partial B_r(\mathbf x_0)}u.
\]

Consider the test function
\[
h_{r,R}(\mathbf x)
=
a_r
+
(M-a_r)
\frac{\log(|\mathbf x-\mathbf x_0|/r)}
{\log(R/r)}.
\]
One has $\HessR h_{r,R}\in\partial\LLL_g(\R^m)$ on $A_{r,R}$.

Moreover,
\[
h_{r,R}=a_r
\quad
\text{on }\partial B_r(\mathbf x_0),
\qquad
h_{r,R}=M
\quad
\text{on }\partial B_R(\mathbf x_0).
\]
Since
\[
u\le a_r
\quad
\text{on }\partial B_r(\mathbf x_0),
\qquad
u\le M
\quad
\text{on }\partial B_R(\mathbf x_0),
\]
comparison for the convex elliptic subequation \(\LLL_g\) yields
\[
u\le h_{r,R}
\qquad
\text{on }A_{r,R}.
\]

Fix \(\mathbf y\neq\mathbf x_0\), and choose
\(
0<r<|\mathbf y-\mathbf x_0|.
\)
For sufficiently large \(R\),
\(
\mathbf y\in A_{r,R},
\)
hence,
\[
u(\mathbf y)
\le
a_r
+
(M-a_r)
\frac{\log(|\mathbf y-\mathbf x_0|/r)}
{\log(R/r)}.
\]
Letting \(R\to\infty\) gives
\[
u(\mathbf y)\le a_r.
\]
Finally, letting \(r\to0\) and using upper semicontinuity,
\[
u(\mathbf y)\le u(\mathbf x_0).
\]
The interchange of \(\mathbf x_0\) and \(\mathbf y\) gives equality. Hence
\(u\) is constant.
\end{proof}

\begin{remark}
Terminal sets are not unique. In the real case, different choices of
the Euclidean metric $g$
generally give different logarithmic cones
\(
\LLL_g(W)\subset\SSS(W).
\)
\end{remark}

By Lemma~\ref{lem:terminal-stability}, there exists a partial order on terminal sets by inclusion. In the spectral case,  we obtain the following maximal property. See also   \autocite[Proposition~13.10]{MR3884610} for a classical interpretation using Riesz characteristic.
\begin{proposition}
\label{prop:maximal-spectral-terminal-sets}
Let \(V\) be a finite-dimensional real Euclidean or complex Hermitian
vector space, and let \(\TT\subset\mathcal Q(V)\) be a spectral terminal
set.

If \(\mathbb F=\mathbb R\), then
\begin{equation}
\label{eq:real-spectral-terminal-contained-log}
    \TT\subset\LLL_g(V).
\end{equation}
If \(\mathbb F=\mathbb C\), then
\begin{equation}
\label{eq:complex-spectral-terminal-equals-positive}
    \TT=\mathcal H_+(V).
\end{equation}
Consequently, \(\LLL_g(V)\) is the maximal real spectral terminal set,
while \(\mathcal H_+(V)\) is the unique complex spectral terminal set.
\end{proposition}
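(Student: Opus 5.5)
The plan is to argue by contradiction, producing an explicit nonconstant bounded-above radial subsolution whenever $\TT$ is larger than claimed. Since $\TT$ is admissible, \ref{Cond:origin} and \ref{Cond:up} give $\mathcal Q_+(V)\subset\TT$. So in the complex case it suffices to prove $\TT\subset\mathcal H_+(V)$. In dimension one there is nothing to do: \ref{Cond:subharmonic} forces $\TT=[0,\infty)$, which equals $\mathcal Q_+(V)=\LLL_g(V)$ when $m=1$. Assume therefore $m=\dim_{\mathbb F}V\ge2$, and work in $g$-orthonormal coordinates with eigenvalue region $\Lambda\subset\mathbb R^m$ as in \eqref{Lambda}.

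\textbf{Step 1: reduction to a normal form.} Suppose $A\in\TT$ violates the desired inclusion. In the real case this means $\Tr A+(m-2)\lambda_{\min}(A)<0$; in the complex case it means $\lambda_{\min}(A)<0$. Write $\lambda(A)=(-a,\lambda_2,\dots,\lambda_m)$ with $-a=\lambda_{\min}$. Since $\Lambda$ is permutation invariant and convex (convexity of $\TT$ restricted to diagonal matrices), averaging over the permutations fixing the first coordinate shows $(-a,b,\dots,b)\in\Lambda$, where $b$ is the mean of $\lambda_2,\dots,\lambda_m$.

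In the real case the violated inequality reads $(m-1)(b-a)<0$, so $b<a$. Since $-a\le\lambda_i$ for all $i$, we get $b\ge -a$; if $b\le 0$ we may anyway use ellipticity to raise the $b$'s, and in particular the violation forces $a>0$. In the complex case $a>0$, and by ellipticity we may replace $b$ by any larger positive number. In both cases we obtain
\[
D:=\operatorname{diag}(-a,b,\dots,b)\in\TT,\qquad a>0,\ b>0,
\]
with $b<a$ in the real case. By convexity and $0\in\TT$, also $tD\in\TT$ for $t\in[0,1]$, and then $tD+\mathcal Q_+(V)\subset\TT$.

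\textbf{Step 2: a radial barrier.} In the real case take $u_0=-|x|^{-p}$ on $\mathbb R^m$. Its Hessian has radial eigenvalue $-p(p+1)r^{-p-2}$ and tangential eigenvalues $p\,r^{-p-2}$. Hence
\[
\HessR u_0=c\,\operatorname{diag}(-(p+1),1,\dots,1)\quad\text{in a rotated frame},\qquad c=p\,r^{-p-2}.
\]
In the complex case take $u_0=-(|z|^2)^{-p}$, i.e.\ $\varphi(t)=-t^{-p}$ with $t=|z|^2$. The complex Hessian eigenvalues are $\varphi'(t)$ with multiplicity $m-1$ and $\varphi'+t\varphi''=-p^2t^{-p-1}$, so
\[
\HessC u_0=c\,\operatorname{diag}(-p,1,\dots,1),\qquad c=p\,t^{-p-1}.
\]
Choose $p>0$ with $(p+1)b<a$ in the real case, which is possible because $b<a$, and with $pb<a$ in the complex case, which is always possible. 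Write $\kappa$ for $p+1$, respectively $p$.

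We want to write $\Hess_{\mathbb F}u_0=tD+P$ with $P\ge0$ and $t\in[0,1]$. Take $t=c\kappa/a$; this matches the negative eigenvalue and requires $c\le a/\kappa$, which holds for $|x|\ge r_0$ large. The remaining eigenvalues then give $c-tb=c(1-\kappa b/a)\ge0$, as required. Since $\TT$ is spectral, the rotated frame is harmless, and so $u_0$ is a classical, hence viscosity, $\TT$-subsolution on $\{|x|>r_0\}$.

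\textbf{Step 3: gluing and conclusion.} Fix $r_1>r_0$ and set
\[
u=\max\{u_0,\,-r_1^{-p}\}
\]
(with the obvious modification in the complex case). Near $\{|x|\le r_1\}$ we have $u=-r_1^{-p}$, a constant, which is a subsolution since $0\in\TT$. Elsewhere $u$ is a maximum of two subsolutions. The maximum of viscosity subsolutions is a subsolution, as used in the proof of \autoref{prop:implicationwithoutholder}, and the property is local, so $u$ is a continuous entire $\TT$-subsolution. It is bounded above by $0$ and nonconstant, contradicting terminality. This proves \eqref{eq:real-spectral-terminal-contained-log} and \eqref{eq:complex-spectral-terminal-equals-positive}.

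Maximality then follows from \autoref{logterminal}, which shows $\LLL_g(V)$ is itself a spectral terminal set, and from \autoref{prop:positive-cone-terminal}, which does the same for $\mathcal H_+(V)$.

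The main obstacle I expect is Step 1: the bookkeeping showing that the averaged normal form still violates the defining inequality, in particular handling signs of $b$ and ensuring the negative eigenvalue is the one preserved. Step 2 needs care only in the scaling. Because $\TT$ is not a cone, one must stay in the far region where $c\le a/\kappa$, and that is exactly why the gluing in Step 3 is needed.
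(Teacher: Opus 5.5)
Your proposal is correct and follows the paper's proof: the paper also reduces to the normal form $(-a,b,\dots,b)$ by permutation averaging and ellipticity, then reaches a contradiction with a bounded-above radial barrier whose Hessian is $tD$ plus a positive semidefinite form with $t\in[0,1]$. The differences are cosmetic. The paper uses the globally smooth barrier $-\varepsilon(1+|x|^2)^{1-q}$ (resp.\ $-\varepsilon(1+|z|^2)^{-\alpha}$) with small $\varepsilon$ instead of your truncation $\max\{-|x|^{-p},-r_1^{-p}\}$, and it treats real dimension two directly from the definition of $\LLL_g$ rather than by the barrier. Your gluing is fine once you cover $V$ by the open sets $\{|x|<r_1\}$ and $\{|x|>r_0\}$; ``near $\{|x|\le r_1\}$'' should read ``on $\{|x|<r_1\}$''.
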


\begin{proof}
Suppose first that \(\mathbb F=\mathbb R\) and
\(n=\dim_{\mathbb R}V\ge3\). If \(\TT\not\subset\LLL_g(V)\), then some
\(A\in\TT\) has ordered eigenvalues
$\lambda_1\le\cdots\le\lambda_n$ satisfying
\[
    \sum_{i=1}^n\lambda_i+(n-2)\lambda_1<0.
\]

By spectral invariance and convexity, averaging over permutations of
the last \(n-1\) coordinates yields
$(a,b,\ldots,b)\in\TT$ with \(a+b<0\). Choose \(\tau\) with $\max\{0,-b\}<\tau<-a-b$. Set
\[
    B:=b+\tau>0,
    \qquad
    s:=\frac{a}{B}<-1 .
\]
By ellipticity,
\[
    (a,B,\ldots,B)\in\TT .
\]
Choose \(q>1\) with \(1-2q\ge s\), and set
\[
    w(x)=-(1+|x|^2)^{1-q}.
\]
The eigenvalues of \(\Hess_{\mathbb R}w\) are
\[
    \lambda^{\mathbb{R}}_T(r)
    \left(
        1-\frac{2qr^2}{1+r^2},
        1,\ldots,1
    \right),
    \qquad
    \lambda^{\mathbb{R}}_T(r)=2(q-1)(1+r^2)^{-q}.
\]
Set
\[
    \theta(r):=1-\frac{2qr^2}{1+r^2}\ge s
    \qquad\text{for all }r\ge0.
\]
Choose \(\varepsilon>0\) so small that
\[
    0\le \frac{\varepsilon\lambda^{\mathbb{R}}_T(r)}{B}\le1
    \qquad\text{for all }r\ge0.
\]
Then, for \(u=\varepsilon w\), the eigenvalues of
\(\Hess_{\mathbb R}u\) are
\[
    \frac{\varepsilon\lambda^{\mathbb{R}}_T(r)}{B}(a,B,\ldots,B)
    +
    \varepsilon\lambda^{\mathbb{R}}_T(r)
    \left(
        \theta(r)-s,
        0,\ldots,0
    \right).
\]
The first term lies in \(\TT\), because \(0\in\TT\), \(\TT\) is convex,
and \((a,B,\ldots,B)\in\TT\). The second term is positive semidefinite.
By ellipticity,
\[
    \Hess_{\mathbb R}u\in\TT.
\]
But \(u\) is smooth, bounded above, and nonconstant. This contradicts
terminality. Hence
\eqref{eq:real-spectral-terminal-contained-log} holds for \(n\ge3\). The cases
\(\dim_{\mathbb R}V=1,2\) follow immediately from the definition of
\(\LLL_g(V)\).

Now suppose \(\mathbb F=\mathbb C\). Let \(n=\dim_{\mathbb C}V\).
Since \(\TT\) is admissible,
\(0\in\TT\) and \(\TT+\mathcal H_+(V)\subset\TT\). Therefore
\[
    \mathcal H_+(V)\subset\TT.
\]
It remains to prove the reverse inclusion. Suppose
\(\TT\not\subset\mathcal H_+(V)\). Then some \(A\in\TT\) has a negative
eigenvalue. Let its ordered eigenvalues be
\[
    \lambda_1\le\cdots\le\lambda_n,
    \qquad
    \lambda_1<0.
\]
By spectral invariance and convexity, averaging over permutations of
\(\lambda_2,\ldots,\lambda_n\) gives
\[
    (a,b,\ldots,b)\in\TT,
    \qquad
    a=\lambda_1<0.
\]
Choose \(\tau>0\) so that \(B:=b+\tau>0\). By ellipticity,
\[
    (a,B,\ldots,B)\in\TT.
\]
Set \(s:=a/B<0\), choose \(\alpha>0\) with \(-\alpha\ge s\), and define
\[
    w(z)=-(1+|z|^2)^{-\alpha}.
\]
Writing \(t=|z|^2\), the eigenvalues of
\(\Hess_{\mathbb C}w=(w_{i\bar j})\) are
\[
    \lambda^{\mathbb{C}}_T(t)
    \left(
        \frac{1-\alpha t}{1+t},
        1,\ldots,1
    \right),
    \qquad
    \lambda^{\mathbb{C}}_T(t)=\alpha(1+t)^{-\alpha-1}.
\]
The first component satisfies
\[
    \frac{1-\alpha t}{1+t}\ge -\alpha\ge s.
\]
Choose \(\varepsilon>0\) so small that
\[
    0\le \frac{\varepsilon\lambda^{\mathbb{C}}_T(t)}{B}\le1
    \qquad\text{for all }t\ge0.
\]
Then, for \(u=\varepsilon w\), the eigenvalues of
\(\Hess_{\mathbb C}u\) are
\[
    \frac{\varepsilon\lambda^{\mathbb{C}}_T(t)}{B}(a,B,\ldots,B)
    +
    \varepsilon\lambda^{\mathbb{C}}_T(t)
    \left(
        \frac{1-\alpha t}{1+t}-s,
        0,\ldots,0
    \right).
\]
The first term lies in \(\TT\), because \(0\in\TT\), \(\TT\) is convex,
and \((a,B,\ldots,B)\in\TT\). The second term is positive semidefinite.
By ellipticity,
\[
    \Hess_{\mathbb C}u\in\TT.
\]
But \(u\) is smooth, bounded above, and nonconstant. This contradicts
terminality. Hence \(\TT\subset\mathcal H_+(V)\), and therefore
\eqref{eq:complex-spectral-terminal-equals-positive} follows.

The cone \(\LLL_g(V)\) is terminal in the real case by
Proposition~\ref{logterminal}, while \(\mathcal H_+(V)\) is terminal in
the complex case by Proposition~\ref{prop:positive-cone-terminal}. The
stated maximality and uniqueness follow.
\end{proof}

\section{An Anisotropic Example}
\begin{lemma}\label{lem:terminal-product}
    Let $\mathbb F \in \{\mathbb R, \mathbb C\}$, let $V, W$ be vector 
    spaces over $\mathbb F$, and let $\A_V \subset \mathcal{Q}(V)$ and $\A_W \subset \mathcal{Q}(W)$ be terminal sets. Then
    \begin{equation}
        \A_{V \oplus W} = \{A \in \mathcal{Q}(V \oplus W): A|_{V} \in \A_V,\ A|_{W} \in \A_W\}
    \end{equation}
    is a terminal set in $\mathcal{Q}(V \oplus W)$, where \(A|_V\) and \(A|_W\) denote the restrictions of \(A\) to
\(V\oplus\{0\}\) and \(\{0\}\oplus W\), respectively.
\end{lemma}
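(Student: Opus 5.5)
First I check that \(\A_{V\oplus W}\) is admissible, and then I prove terminality by restricting subsolutions to affine slices.

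\textbf{Admissibility.} Write \(\iota_V\colon V\to V\oplus W\) and \(\iota_W\colon W\to V\oplus W\) for the inclusions. Then \(A|_V=\iota_V^*A\) and \(A|_W=\iota_W^*A\), and
\[
\A_{V\oplus W}=(\iota_V^*)^{-1}(\A_V)\cap(\iota_W^*)^{-1}(\A_W).
\]
Each preimage is closed, convex and contains \(0\). Each is also upward closed, because restricting a positive semidefinite form gives a positive semidefinite form. For \ref{Cond:subharmonic}, take \(h=h_V\oplus h_W\), where \(h_V,h_W\) are the metrics from the admissibility of \(\A_V,\A_W\). Choosing an orthonormal basis adapted to the splitting gives
\[
\Tr_hA=\Tr_{h_V}(A|_V)+\Tr_{h_W}(A|_W)\ge0 .
\]
So \(\A_{V\oplus W}\) is admissible.

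\textbf{Terminality.} Let \(u\) be a continuous subsolution of \(\Hess_{\mathbb F}u\in\A_{V\oplus W}\) that is bounded above. For each fixed \(w_0\in W\), I claim the slice \(v\mapsto u(v,w_0)\) is a viscosity subsolution of \(\Hess_{\mathbb F}\in\A_V\). This is a restriction statement to the affine subspace \(V\times\{w_0\}\). I would prove it with the restriction theorem \autocite{HarveyLawsonRestriction}, as in \autoref{prop:positive-cone-terminal}. The restricted subequation is \(\iota_V^*\A_{V\oplus W}\), and it is contained in \(\A_V\) by definition.

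If I cannot quote that theorem directly, I would argue by hand. Suppose \(\varphi\) touches the slice from above at \(v_0\), and I may assume the touching is strict. Test \(u\) with
\[
\psi(v,w)=\varphi(v)+k|w-w_0|^2 .
\]
For large \(k\), \(u-\psi\) has local maxima at points \((v_k,w_k)\to(v_0,w_0)\). At these points \(\Hess_{\mathbb F}\psi\in\A_{V\oplus W}\), so its \(V\)-block \(\Hess_{\mathbb F}\varphi(v_k)\) lies in \(\A_V\). Since the \(V\)-block of \(\psi\) does not involve \(k\), this needs no control as \(k\to\infty\). Letting \(k\to\infty\) and using that \(\A_V\) is closed gives \(\Hess_{\mathbb F}\varphi(v_0)\in\A_V\).

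The slice is bounded above, and \(\A_V\) is terminal, so \(u(\cdot,w_0)\) is constant for every \(w_0\). By the symmetric argument, \(u(v_0,\cdot)\) is constant for every \(v_0\). Hence
\[
u(v,w)=u(v,w')=u(v',w')\quad\text{for all }v,v'\in V,\ w,w'\in W,
\]
so \(u\) is constant. This proves that \(\A_{V\oplus W}\) is terminal.

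The main obstacle is the slice-restriction step: penalizing in the \(w\)-directions must not destroy the \(V\)-block information, and the block structure is what makes this work. The edge case where \(V\) or \(W\) is zero-dimensional is trivial.
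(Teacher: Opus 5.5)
Your proof is correct. The admissibility part is the same as the paper's; the terminality step takes a different route.

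\textbf{The paper's route.} It first mollifies $u$ with respect to $h_V\oplus h_W$ and invokes \ref{PropConsCond:P5} to know that $u_\varepsilon$ is still a subsolution of $\Hess_{\mathbb F}u_\varepsilon\in\A_{V\oplus W}$. For the smooth $u_\varepsilon$, the Hessian of each slice is pointwise the $V$-block (respectively $W$-block) of $\Hess_{\mathbb F}u_\varepsilon$. So terminality of $\A_V$ and $\A_W$ makes $u_\varepsilon$ constant, and $u$ is constant after letting $\varepsilon\to0$.

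\textbf{Your route.} You restrict the nonsmooth $u$ directly, using the test function $\varphi(v)+k|w-w_0|^2$. This works precisely because the $V$-block of its Hessian is $\Hess_{\mathbb F}\varphi(v_k)$, which does not depend on $k$. Make the contact strict, e.g.\ by adding $\eta|v-v_0|^4$, and maximize over a compact neighbourhood. Then the penalty forces $w_k\to w_0$, strictness together with continuity forces $v_k\to v_0$, and closedness of $\A_V$ finishes the step.

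\textbf{Comparison.} Your argument is more elementary and self-contained. It avoids \ref{PropConsCond:P5}, whose proof rests on the envelope machinery of \autoref{thm:upper-envelope} and \autoref{thm:caffarelli}, and it needs no final limit in $\varepsilon$. The paper's route buys a completely classical slicing step at the cost of that machinery.

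A small remark: in fact $\iota_V^*\A_{V\oplus W}=\A_V$, not merely $\subset$. For $B\in\A_V$ one has $B\oplus0\in\A_{V\oplus W}$, since $0\in\A_W$.
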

\begin{proof}
    We first verify that $\A_{V \oplus W}$ is admissible. Closedness, convexity, and $0 \in \A_{V \oplus W}$ follow immediately from the corresponding properties of $\A_V$ and $\A_W$, because all conditions are checked componentwise. For $A \in \A_{V \oplus W}$ and $P \in \mathcal{Q}_+(V \oplus W)$, the restrictions satisfy $(A+P)|_V = A|_V + P|_V \in \A_V$ and $(A+P)|_W = A|_W + P|_W \in \A_W$, since $P|_V$ and $P|_W$ are positive semidefinite and each $\A_V, \A_W$ is admissible. Hence $A+P \in \A_{V \oplus W}$.

    Let $h_V$ and $h_W$ be metrics on $V$ and $W$ such that $\Tr_{h_V} A \ge 0$ for all $A \in \A_V$ and $\Tr_{h_W} B \ge 0$ for all $B \in \A_W$.  Define $h_{V \oplus W} = h_V \oplus h_W$. For any $A \in \A_{V \oplus W}$,
    \[
        \Tr_{h_{V \oplus W}} A = \Tr_{h_V}(A|_V) + \Tr_{h_W}(A|_W) \ge 0 .
    \]
    Thus $\A_{V \oplus W}$ is admissible.

    Now let $u$ be a bounded-above continuous viscosity subsolution of $\Hess_\mathbb{F} u \in \A_{V \oplus W}$. By \ref{PropConsCond:P5}, the mollification $u_\varepsilon$ with respect to $h_{V \oplus W}$ remains a subsolution of $\Hess_\mathbb{F} u_\varepsilon \in \A_{V \oplus W}$.

    For each fixed $y \in W$, consider the slice $v_y(x) = u_\varepsilon(x, y)$. Its Hessian satisfies $\Hess_\mathbb{F} v_y(x) = (\Hess_\mathbb{F} u_\varepsilon(x, y))|_V \in \A_V$, because $\Hess_\mathbb{F} u_\varepsilon(x, y) \in \A_{V \oplus W}$ and the restriction to $V$ of any element of $\A_{V \oplus W}$ belongs to $\A_V$ by definition.  Thus $v_y$ is a bounded-above $\A_V$-subsolution on $V$. Since $\A_V$ is terminal, $v_y$ is constant. Hence $u_\varepsilon$ is independent of the $V$-variable.

    A symmetric argument shows that $u_\varepsilon$ is independent of the $W$-variable. Therefore $u_\varepsilon$ is constant on $V \oplus W$ for every sufficiently small $\varepsilon > 0$. Letting $\varepsilon \to 0$, the uniform convergence of $u_\varepsilon$ to $u$ implies that $u$ is constant.  Hence $\A_{V \oplus W}$ is terminal.
\end{proof}
\begin{example}[Products of logarithmic cones]\label{ex:product-log-cones}
For $k \ge 2$, let $W_1,\dots,W_k$ be finite-dimensional real vector spaces,
each endowed with a Euclidean metric $g_i$ and with
$\dim W_k \ge 3$.  Form the orthogonal direct sum
\[
V = W_1 \oplus \cdots \oplus W_k,
\qquad
g_V = g_1 \oplus \cdots \oplus g_k .
\]
For each $i$, let $\LLL_{g_i}(W_i)$ be the real logarithmic cone
on $W_i$ with respect to $g_i$ (Definition~\ref{def:real-log-cone}),
and define
\begin{equation}\label{eq:product-terminal}
\A = \bigl\{ A \in \mathcal{Q}(V):
A|_{W_i} \in \LLL_{g_i}(W_i) \;\text{for all } i \bigr\}.
\end{equation}
No condition is imposed on the off-diagonal blocks.

By Proposition~\ref{logterminal}, each $\LLL_{g_i}(W_i)$ is a terminal
set.  Lemma~\ref{lem:terminal-product} implies that $\A$ is terminal
and, in particular, Liouville admissible.  Moreover, $\A$ is not
spectral and not invariant under the full orthogonal group of $V$,
since its definition depends on the distinguished splitting
$V = W_1 \oplus \cdots \oplus W_k$ and on the choice of each metric
$g_i$.  A rotation that mixes different subspaces $W_i$ and $W_j$
does not preserve the block-wise conditions
in~\eqref{eq:product-terminal}, so $\A$ is anisotropic.
\end{example}\begin{example}[Failure of the boundary equality for a quotient]\label{ex:boundary-equality-failure}
Consider the same setting as in Example~\ref{ex:product-log-cones}
with $k=2$. Write $V = W \oplus E$, where
$W = W_1$, $E = W_2$, $\dim W \ge 3$, $\dim E \ge 3$.
Let $\A$ be as in \eqref{eq:product-terminal} and set
$\Sigma = \partial\A$.

The quotient reduction along $E$ gives
$\A_{/E} = \LLL_{g_W}(W)$.
Since $0 \in \partial\LLL_{g_E}(E)$, every lift
$q_E^* B = \begin{pmatrix} B & 0 \\ 0 & 0 \end{pmatrix}$ with
$B \in \LLL_{g_W}(W)$ lies in $\Sigma$, whence
$\Sigma_{/E} = \A_{/E}$.
Because $\LLL_{g_W}(W)$ has nonempty interior,
$\partial(\A_{/E})$ is a proper subset of $\A_{/E}$, so the
second alternative of Liouville admissibility fails for this
quotient.  Nevertheless, $\A$ is terminal and hence Liouville
admissible, satisfying the first alternative trivially.

This example illustrates why enlarging the class of terminal
sets beyond the positive semidefinite cone $\mathcal{Q}_+(W)$
is essential for the theory.  Had we restricted the definition
of Liouville admissibility by allowing only
$\mathcal{Q}_+(W)$ as a terminal set, the quotient
$\A_{/E} = \LLL_{g_W}(W)$ would violate both alternatives:
it is not contained in $\mathcal{Q}_+(W)$, and the boundary
equality fails.  By admitting the logarithmic cone as a
terminal set, we retain Liouville admissibility for $\A$ and
thus obtain a Liouville theorem for a class of admissible sets
that are neither spectral nor isotropic.
\end{example}

\printbibliography

\end{document}